\def\Diff{\mathop{\rm Diff}\nolimits}
\def\End{\mathop{\rm End}\nolimits}
\def\Id{\mathop{\rm Id}\nolimits}
\def\Ad{\mathop{\rm Ad}\nolimits}
\def\ad{\mathop{\rm ad}\nolimits}
\def\Tr{\mathop{\rm Tr}\nolimits}
\def\Cb{{\mathbb C}}
\def\Nb{{\mathbb N}}
\def\Rb{{\mathbb R}}
\def\Fc{{\cal F}}
\def\Hc{{\cal H}}
\def\Ic{{\cal I}}
\def\Uc{{\cal U}}
\def\Cc{{\cal C}}
\def\a{\alpha}
\def\b{\beta}
\def\d{\delta}
\def\D{\Delta}
\def\g{\gamma}
\def\lb{\lambda}
\def\s{\sigma}
\def\ve{\varepsilon}
\def\vp{\varphi}
\def\0b{\bf 0}
\def\nb{\nabla}
\def\ot{\otimes}
\def\ra{\rightarrow}
\def\rt{\triangleright}
\def\lt{\triangleleft}
\def\acl{\blacktriangleright\hspace{-4pt}\vartriangleleft }
\def\topcl{\hat{\blacktriangleright\hspace{-4pt} < }_\pi}
\def\topal{\hat{>\hspace{-4pt}\vartriangleleft}_\pi}
\def\tacl{\hat{\blacktriangleright\hspace{-4pt}\vartriangleleft }_\pi}
\def\bi{\bowtie}
\def\p{\partial}
\def\0D{\Delta^{(0)}}
\def\1D{\Delta^{(1)}}
\def\Db{\blacktriangledown}
\def\wg{\wedge}
\newcommand{\Fg}{\mathfrak{g}}
\newcommand{\Fk}{\mathfrak{k}}
\def\projot{\widehat{\ot}_\pi}
\def\projhat{\widehat{\ot}}
\def\oprojdots{\projot\cdots\projot}
\newtheorem{theorem}{Theorem}[section]
\newtheorem{remark}[theorem]{Remark}
\newtheorem{proposition}[theorem]{Proposition}
\newtheorem{lemma}[theorem]{Lemma}
\newtheorem{corollary}[theorem]{Corollary}
\newtheorem{example}[theorem]{Example}
\newtheorem{definition}[theorem]{Definition}
\def\ni{\noindent}
\def\build#1_#2^#3{\mathrel{
\mathop{\kern 0pt#1}\limits_{#2}^{#3}}}
\newcommand{\ps}[1]{~\hspace{-4pt}_{^{(#1)}}}
\newcommand{\ns}[1]{~\hspace{-4pt}_{_{{<#1>}}}}
\newcommand{\nsb}[1]{~\hspace{-4pt}_{^{[#1]}}}
\def\wdots{\wedge\dots\wedge}
\def\one{{\bf 1}}
\newcommand{\ie}{{\it i.e.\/}\ }
\def\a{\alpha}
\def\b{\beta}
\def\d{\delta}
\def\g{\gamma}
\def\lb{\lambda}
\def\s{\sigma}
\def\ve{\varepsilon}
\def\vp{\varphi}
\def\D{\Delta}
\def\Lb{\Lambda}
\def\nb{\nabla}
\def\ot{\otimes}
\def\part{\partial}
\def\ra{\rightarrow}
\def\text{\hbox}
\def\nb{\nabla}
\def\ot{\otimes}
\def\ra{\rightarrow}
\def\Ad{\mathop{\rm Ad}\nolimits}
\def\Diff{\mathop{\rm Diff}\nolimits}
\def\End{\mathop{\rm End}\nolimits}
\def\Id{\mathop{\rm Id}\nolimits}
\def\exp{\mathop{\rm exp}\nolimits}
\def\lra{\longrightarrow}
\def\build#1_#2^#3{\mathrel{
\mathop{\kern 0pt#1}\limits_{#2}^{#3}}}
\numberwithin{equation}{section}
\begin{document}

\title{\bf  Topological Hopf algebras and their Hopf-cyclic cohomology}
\author{
\begin{tabular}{cc}
Bahram Rangipour \thanks{Department of Mathematics  and   Statistics,
     University of New Brunswick, Fredericton, NB, Canada    \quad  Email: bahram@unb.ca, }
     \quad and  \quad  Serkan S\"utl\"u \thanks{Department of Mathematics,
     I\c{s}{\i}k University, \.Istanbul, Turkey  \quad Email: serkan.sutlu@isikun.edu.tr }
      \end{tabular}  }

\date{}
\maketitle

\abstract{\ni  A natural extension of the Hopf-cyclic cohomology, with coefficients, is introduced  to  encompass topological Hopf algebras. The topological theory allows to work with infinite dimensional Lie algebras. Furthermore, the category of coefficients (AYD modules) over a topological Lie algebra and those over its universal enveloping (Hopf) algebra are isomorphic. For topological Hopf algebras, the category of coefficients is identified with the representation category of a topological algebra called the anti-Drinfeld double. Finally, a topological van Est type isomorphism is detailed, connecting the Hopf-cyclic cohomology to the relative Lie algebra cohomology with respect  to a maximal compact subalgebra.}\\

\noindent \textit{Key Words:} Topological Hopf algebras, Hopf-cyclic cohomology, Infinite dimensional Lie algebras\\

\noindent \textit{MSC:} 19D55, 16S40, 57T05

\newpage

\tableofcontents

\section{Introduction}

In the fundamental  work of Connes and Moscovici on their local index formula \cite{ConnMosc98}, Hopf-cyclic cohomology emerged naturally as a cohomology theory on the domain of a characteristic homomorphism (whose range being the cyclic cohomology of an algebra on which the Hopf-algebra acts). As such, this cyclic cohomology theory for Hopf algebras was designed as a computational gadget to access the K-theory invariants. 

\ni Moreover, it was also \cite{ConnMosc98} in which a Hopf algebra $\Hc_n$ - called the Connes-Moscovici Hopf algebra - is introduced, and the Hopf-cyclic cohomology of $\Hc_n$ is identified with the Gelfand-Fuks cohomology of the (infinite dimensional) Lie algebra $W_n$ of formal vector fields on $\Rb^n$. That is, from the very beginning, the Hopf-cyclic cohomology is intimitely related to the theory of the characteristic classes of foliations.

\ni However, although feasible techniques (employing the bicrossed product structure) are introduced later to compute the Hopf-cyclic cohomology of $\Hc_n$, \cite{MoscRang09}, there appeared no direct passage between the Hopf-cyclic cohomology of $\Hc_n$, and the Gelfand-Fuks cohomology of $W_n$.

\ni It is this missing bridge, between the characteristic classes of foliations and the Hopf-cyclic cohomology, we strive to develop; via which one may attempt to calculate the characteristic classes using the Hopf-cyclic cohomology techniques.

\ni The first step towards this direction was taken in \cite{RangSutl-II}, where we have introduced a cyclic (co)homology theory for Lie algebras. We then identified this Lie-cyclic cohomology of a Lie algebra with the Hopf-cyclic cohomology of its universal enveloping (Hopf) algebra, where both cohomologies were allowed to bear the coefficients; the stable-anti-Yetter-Drinfeld (SAYD) modules \cite{HajaKhalRangSomm04-II,HajaKhalRangSomm04-I,Kayg05,JaraStef06}.

\ni The key step of this identification was the construction of a coaction of the universal enveloping algebra on the coefficient space, given the coaction of the Lie algebra. What was observed in \cite[Prop. 5.7]{RangSutl-II}, see also \cite[Thm. 4.4]{RangSutl-III}, was that this is possible only if the Lie algebra coaction is ``locally conilpotent'', \cite[Def. 5.4]{RangSutl-II}. The terminology follows from the fact that it encodes (under the action-coaction duality) the nilpotent action of the symmetric algebra $S(\Fg^\ast)$. That was the reason why the truncated Weil algebra could be identified with a Hopf-cyclic complex (the coaction of a Lie algebra $\Fg$ on the space $S(\Fg^\ast)_{[n]}$ of truncated polynomials is locally conilpotent), while this was no longer the case for the Weil algebra without truncation (the coaction of the Lie algebra $\Fg$ on the full symmetric algebra $S(\Fg^\ast)$ is not locally conilpotent, \cite[Ex. 5.6]{RangSutl-II}).

\ni In addition to the local conilpotency restriction, all van Est type isomorphisms we have developed through \cite[Thm. 4.6]{RangSutl-III} and \cite[Thm. 4.10]{RangSutl}, or in particular, \cite[Coroll. 4.11 - 4.13]{RangSutl}, worked with finite dimensional Lie algebras. As such, the infinite dimensional Lie algebra $W_n$ of formal vector fields, its Gelfand-Fuks cohomology, and hence the characteristic classes were beyond the reach of the Hopf-cyclic theory.

\ni In the present paper, we achieve to extend the Hopf-cyclic (as well as the Lie-cyclic) theory to a level that allows to work with infinite dimensional Lie algebras, as well as the exponentiation of the Lie algebra coactions, in the presence of a suitable topology. 

\ni More precisely, we obtain the following result.
\\\\
{\bf Proposition \ref{prop-g-AYD-to-Ug-AYD}.}
{\it Let $\Fg$ be a topological Lie algebra, and $V$ a topological $\Fg$-module / comodule. Then $V$ is a topological right-left AYD module over $\Fg$ if and only if it is a topological right-left AYD module over $U(\Fg)$.}
\\\\
\ni Furthermore, along the lines of the van Est type isomorphisms \cite[Thm. 4.6]{RangSutl-III},  \cite[Prop. 7]{ConnMosc98}, \cite[Thm. 15]{ConnMosc}, \cite[Thm. 4.10]{RangSutl}, \cite[Thm. 6.2]{RangSutl-II}, and \cite[Thm. 4.6]{RangSutl-III}, we obtain the following.
\\\\
{\bf Theorem \ref{thm-main-identf}.}
{\it Let $(G_1,G_2)$ be a matched pair of Lie groups, with Lie algebras $(\Fg_1,\Fg_2)$. Then the periodic Hopf-cyclic cohomology of the Hopf algebra $\Fc^\infty(G_2)\tacl U(\Fg_1)$  with coefficients in the canonical MPI ${}^\s k_\d$ is isomorphic with the Lie algebra  cohomology, with trivial coefficients, of $\Fg_1\bi\Fg_2$ relative to the maximal compact Lie subalgebra $\Fk$ of $\Fg_2$. In short,
\begin{equation*}
HP^\ast_{top}(\Fc^\infty(G_2)\tacl U(\Fg_1),{}^\s k_\d) \cong \widetilde{HP}^\ast(\Fg_1\bi\Fg_2,\Fk).
\end{equation*}}

\ni Recently, the topological Hopf-cyclic theory developed here has been successfully applied to the construction of a direct link between the characteristic classes of foliations and the Hopf-cyclic cohomology that allows the transfer of the cocycles. The (topological) Hopf-cyclic cohomology of the Connes-Moscovici Hopf algebra $\Hc_n$, with infinite dimensional coefficients (the space of formal differential 1-forms) was identified with the Gelfand-Fuks cohomology of the Lie algebra $W_n$ of formal vector fields on $\Rb^n$ (with the same infinite dimensional coefficient space), \cite{RangSutlYazd17-arxiv} . More precisely, we refer the reader to \cite[Sect. 5]{RangSutlYazd17-arxiv} for the illustration of the transfer of the classes in the case $n=1$.
 


\ni The plan of the paper is as follows. Section \ref{sect-top-Lie-Hopf} is devoted to the straightforward extension of the Lie-Hopf algebras of \cite{RangSutl-I-arxiv} to the realm of topological Hopf algebras. Upon recalling the topological Hopf algebras in Subsection \ref{subsect-top-Hopf-alg}, we discuss the bicrossproducts of topological Hopf algebras in Subsection \ref{subsect-Lie-Hopf-bicross}, in order to develop our main example; the Hopf algebra $\Fc^\infty(G_2)\,\tacl\,U(\Fg_1)$. Section \ref{sect-Hopf-cyclic-top} is devoted to a carefull generalization of the Hopf-cyclic cohomology theory of Hopf algebras, to the level of topological Hopf algebras. More precisely, following the chronological order, in Subsection \ref{subsect-Hopf-cyclic-comp} we develop first the canonical modular pair in involution (MPI) for a topological Lie-Hopf algebra towards the Hopf-cyclic cohomology with trivial coefficients. This is important as one of our main tasks is to identify the Hopf-cyclic cohomology of the Hopf algebra $\Fc^\infty(G_2)\tacl U(\Fg_1)$ with coefficients that come from the canonical MPI over it. We then carefully upgrade the usual Hopf-cyclic complex with general SAYD coefficients to the level of the topological Hopf algebras. In Subsection \ref{subsect-Hopf-cyclic-coeff} we achive yet another important result. More explicitly, we characterise the category of SAYD modules over a Hopf algebra $\Hc$ as the representation category of the algebra $\Hc^\circ\projot \Hc$, called the anti-Drinfeld-double. This generalises \cite[Prop. 4.2]{HajaKhalRangSomm04-I}, which holds only for the finite dimensional Hopf algebras. Another generalisation to the topological world is discussed in Section \ref{sect-cycl-cohom-top-Lie}, this time from the finite dimensional Lie algebras to topological Lie algebras (which are now allowed to be infinite dimensional). To this end, Subsection \ref{subsect-comod-lie-alg} provides a study of the category of SAYD modules over the universal enveloping (Hopf) algebras. It is this subsection that we derive one of our main results; namely Proposition \ref{prop-g-AYD-to-Ug-AYD}. On the next subsection, Subsection \ref{subsect-cyclic-complexes-lie-alg}, we present a rather straightforward generalisation of \cite{RangSutl-II}, the theory of cyclic cohomology (with coefficients) of Lie algebras, to topological Lie algebras (of possibly infinite dimension). Finally, in Section \ref{sect-comput}, more precisely in Subsection \ref{subsect-comput-Lie-Hopf} we identify the Hopf-cyclic cohomology of the Hopf algebra $\Fc^\infty(G_2)\tacl U(\Fg_1)$ with the cyclic cohomology of the Lie algebra $\Fg_1 \bowtie \Fg_2$. A critial step of this identification is the isomorphism between the (coalgebra) Hochschild cohomology of the topological $\Fg_1$-Hopf algebra $\Fc^\infty(G_2)$ and the differentiable group cohomology of $G_2$. This is discussed, in detail, in Subsection \ref{subsect-comput-Hochschild-cohom}.
  
\ni Unless stated otherwise, throughout the text a vector space is assumed to be ``well-behaved'', \cite{BonnFlatGersPinc94,BonnSter05}, that is, either nuclear and Fr\'echet, or nuclear and dual\footnote[1]{Strong dual, \cite[Sect. II.2.3]{Schaefer-book}} of Fr\'echet, \cite{Grot55,Treves-book}. It is explained in \cite[Appdx. 2]{BonnFlatGersPinc94} that any countable dimensional vector space may be naturally endowed with this topology, called the ``natural topology'' therein. We use the projective tensor product $\ot_\pi$, and its completion $\projot$ for the tensor product of topological vector spaces (t.v.s.). For details we refer the reader to \cite{Schaefer-book,Treves-book}. On the other hand, $\Fg$ denotes a countable dimensional Lie algebra, $k$ denotes the ground field (of characteristic zero, equipped with the natural topology mentioned above), and $\ot$ refers to the tensor product over $k$.

\bigskip

\ni  B.R. would like to thank the Hausdorff Institute  in Bonn for its hospitality and support during the time this work was in progress.

\section{The Hopf algebra $\Fc^\infty(G_2)\,\tacl\,U(\Fg_1)$}\label{sect-top-Lie-Hopf}

We shall first recall the topological Hopf algebras, as well as their (co)actions, in Subsection \ref{subsect-top-Hopf-alg}. The examples discussed in Subsection \ref{subsect-top-Hopf-alg} form the building blocks of the bicrossproduct Lie-Hopf algebras of Subsection \ref{subsect-Lie-Hopf-bicross}.

\subsection{Topological Hopf algebras}\label{subsect-top-Hopf-alg}

Let us first recall the notion of a topological Hopf algebra from \cite[Def. 1.2]{BonnFlatGersPinc94}, see also \cite[Def. 2]{BonnSter05}, in which they are defined as well-behaved Hopf algebras.

\begin{definition}
A Hopf algebra (resp. algebra, coalgebra) $H$ whose underlying vector space is a t.v.s. is called a topological Hopf algebra (resp. algebra, coalgebra) if the Hopf algebra structure maps (resp. algebra, coalgebra structure maps) are continuous.
\end{definition}

\ni Any countable dimensional Hopf algebra, equipped with the strict inductive limit topology \cite[Sect. I.13]{Treves-book}, is a topological Hopf algebra by \cite[Prop. 1.5.1]{BonnFlatGersPinc94}. We list below more specific examples.

\begin{example}
{\rm 
The universal enveloping algebra $U(\Fg)$ of a (countable dimensonal) Lie algebra $\Fg$. 
}
\end{example}

\begin{example}
{\rm 
The Hopf algebra $R(G)$ of representative functions on a compact connected Lie group $G$, \cite[Sect. 2.4]{BonnFlatGersPinc94}, see also \cite{BonnSter05} for a linear or semi-simple Lie group $G$.
}
\end{example}


\ni For a further example, the Hopf algebra of (infinitely) differentiable functions on a real analytic group, we adopt the terminology of \cite{HochMost62}.

\begin{definition}\label{def-diffble-map}
Let $G$ be a real analytic group, and let $\Fg$ be its Lie algebra. Let also $V$ be a t.v.s. Then, a continuous map $\vp:G\lra V$ is called differentiable if
\begin{itemize}
\item[(i)] for any $g\in G$, $X\in \Fg$, and $t\in \Rb$,
\begin{equation*}
\vp(g\exp(tX)) = \vp(g)+t\widetilde{\vp}(g,X,t),
\end{equation*}
for a continuous map $\widetilde{\vp}:G\times \Fg\times \Rb\lra V$,
\item[(ii)] for any $X\in \Fg$, the map $X(\vp):G\lra V$ given by
\begin{equation*}
X(\vp)(g):= \widetilde{\vp}(g,X,0)
\end{equation*}
satisfies $(i)$, as well as the map $Y(X(\vp))$, for any $X,Y \in \Fg$, etc.
\end{itemize}
\end{definition}

\begin{example}
{\rm 
Let $G$ be a real analytic group, and $\Fc^\infty(G)$ the space of real valued differentiable functions on $G$. The space $\Fc^\infty(G)$, equipped with the topology of uniform convergence on compact subsets of the functions and of their derivatives, is Fr\'echet \cite[Sect. I.10]{Treves-book}, and nuclear \cite[Sect. III.51]{Treves-book}. As a result, $\Fc^\infty(G)$ is a well-behaved Hopf algebra. See also \cite[Ex. 2.2.1]{BonnSter05}, and \cite[Ex. 1.7]{BonnFlatGersPinc94}.
}
\end{example}

\ni We next record below the definitions of topological modules and comodules, see \cite{Tayl72}.

\begin{definition}
Given a topological algebra $A$, a right topological $A$-module is a t.v.s. $M$ which is a (unital) right $A$-module structure $M\times A \lra M$ that extends to a continuous linear map $M\,\projot\, A \lra M$.

\ni Similarly, given a topological coalgebra $C$, a left topological $C$-comodule is a t.v.s. $V$ which is a (counital) left $C$-comodule $V\lra C \,\projot\, V$.
\end{definition}

\ni An immediate example, that we note here for the later use, is the symmetric algebra $S(\Fg^\ast)$ being a topological $U(\Fg)$-module via the coadjoint action.

\ni Let us next recall the notion of a differentiable $G$-module from \cite{HochMost62}.

\begin{definition}\label{def-diffble-module}
A topological $G$-module $V$ is called a differentiable $G$-module if the map
\begin{equation*}
\rho_v:G\lra V,\qquad \rho_v(g):=v\cdot g
\end{equation*}
is differentiable for any $v\in V$.
\end{definition}



\ni We then have the following characterization.

\begin{proposition}\label{prop-diff-ble-G-module}
Let $G$ be a real analytic group, and $V$ a t.v.s. If $V$ is a left $\Fc^\infty(G)$-comodule, via $v\mapsto v^{\ns{-1}}\,\projot\,v^{\ns{0}} \in \Fc^\infty(G)\,\projot\,V$, then $V$ is a differentiable right $G$-module by
\begin{equation}\label{aux-G-action}
v\cdot g := v^{\ns{-1}}(g)v^{\ns{0}},\qquad \forall\,v\in V,\,g\in G.
\end{equation}
Conversely, if $V$ is a locally finite differentiable right $G$-module\footnote[1]{Any element of $V$ is contained in a finite dimensional (differentiable) $G$-module, \cite[Sect. 1.2]{Hoch-book}.}, then $V$ is a left $\Fc^\infty(G)$-comodule by
\begin{equation}\label{aux-HG-coact}
\nb:V \lra \Fc^{\infty}(G)\,\projot\, V,\quad \nb(v)(g) := v\cdot g.
\end{equation}
\end{proposition}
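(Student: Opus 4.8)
The plan is to verify the two directions of the correspondence separately, in each case checking first that the candidate structure map is well defined and continuous, and then that the module/comodule axioms transport correctly. For the forward direction, suppose $\nb(v) = v^{\ns{-1}}\projot v^{\ns{0}}$ is a left $\Fc^\infty(G)$-coaction. The map \eqref{aux-G-action} is obtained by evaluating the first tensor leg at $g\in G$; since evaluation $\ev_g\colon\Fc^\infty(G)\to k$ is continuous and $\nb$ is continuous by hypothesis, the composite $v\mapsto (\ev_g\projot\Id)(\nb(v)) = v\cdot g$ is continuous in $v$ for each fixed $g$. That this is a right action follows from coassociativity of $\nb$ together with the fact that comultiplication of $\Fc^\infty(G)$ is dual to the group multiplication, i.e. $\D(f)(g,h) = f(gh)$; counitality of $\nb$ gives $v\cdot e = v$.

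The main work in this direction is to check differentiability of $\rho_v\colon G\to V$ in the sense of Definition \ref{def-diffble-map}. First I would observe that $\rho_v(g) = v\cdot g = (\ev_g\projot\Id)\nb(v)$, so $\rho_v$ factors through $\nb(v)\in\Fc^\infty(G)\projot V$ followed by the map $\Fc^\infty(G)\to V$, $f\mapsto f(g)v^{\ns{0}}$ — more precisely $\rho_v$ is the image of the element $\nb(v)$ under the canonical map $\Fc^\infty(G)\projot V\to \Fc^\infty(G,V)$ (continuous functions $G\to V$), which for our well-behaved (nuclear) spaces lands in the differentiable $V$-valued functions because $\Fc^\infty(G)\projot V \cong \Fc^\infty(G,V)$ when $V$ is nuclear Fr\'echet (or the dual of such), by the standard identification of vector-valued smooth functions with the completed projective tensor product. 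Thus condition (i) of Definition \ref{def-diffble-map} holds with $\widetilde{\rho_v}(g,X,t)$ read off from the Taylor expansion of the scalar functions $v^{\ns{-1}}$, and condition (ii) — closure under applying Lie algebra elements — follows because differentiating in the group variable corresponds to applying primitive-like operators to the $\Fc^\infty(G)$-leg, which preserves $\Fc^\infty(G)$.

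For the converse, given a differentiable right $G$-module $V$, define $\nb(v)\colon g\mapsto v\cdot g$. By Definition \ref{def-diffble-module} this is a differentiable, hence smooth, $V$-valued function on $G$, so $\nb(v)\in\Fc^\infty(G,V)\cong\Fc^\infty(G)\projot V$, which gives the target of \eqref{aux-HG-coact}; continuity of $\nb$ follows from continuity of the action map $V\times G\to V$ together with the closed graph / uniform boundedness properties available for well-behaved spaces. Coassociativity of $\nb$ is again exactly the statement that $\D(f)(g,h)=f(gh)$ combined with associativity of the action, $(v\cdot g)\cdot h = v\cdot(gh)$, and counitality is $v\cdot e = v$. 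Finally one checks the two constructions are mutually inverse, which is immediate: starting from a coaction, forming the action and then the coaction of that action returns $g\mapsto (\ev_g\projot\Id)\nb(v)$, which is $\nb(v)$ itself under the identification $\Fc^\infty(G)\projot V\cong\Fc^\infty(G,V)$, and symmetrically in the other order.

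The step I expect to be the genuine obstacle is the identification $\Fc^\infty(G)\projot V \cong \Fc^\infty(G,V)$ and, bundled with it, the verification that the differentiability condition of Definition \ref{def-diffble-map} (which is phrased \emph{intrinsically}, via the auxiliary map $\widetilde\vp$ and closure under $X(-)$) matches membership in this completed tensor product. Everything algebraic — the action/coaction axioms — is a routine dualization of group multiplication versus coalgebra comultiplication; the analytic content lives entirely in controlling the topology, and this is where the hypothesis that all spaces are well-behaved (nuclear, and Fr\'echet or dual-of-Fr\'echet) is essential, since the kernel theorem / the coincidence of $\projot$ with the space of vector-valued smooth functions can fail outside the nuclear setting.
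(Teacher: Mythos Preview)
Your proposal is correct, and the algebraic verifications (action/coaction axioms from coassociativity and $\D(f)(g,h)=f(gh)$) match the paper's exactly; the difference lies entirely in how differentiability is handled.

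The paper does \emph{not} invoke the identification $\Fc^\infty(G)\projot V \cong \Fc^\infty(G,V)$. Instead it checks Definition~\ref{def-diffble-map} by hand in both directions. Forward: since each $v^{\ns{-1}}\in\Fc^\infty(G)$ is already differentiable in the Hochschild--Mostow sense, one has $v^{\ns{-1}}(g\exp(tX))=v^{\ns{-1}}(g)+t\,\widetilde{v^{\ns{-1}}}(g,X,t)$, and the paper simply sets $\widetilde{\rho_v}(g,X,t):=\widetilde{v^{\ns{-1}}}(g,X,t)\,v^{\ns{0}}$, then repeats the computation with $Y(v^{\ns{-1}})$ to get condition~(ii). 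Converse: the paper pulls back through a separating family of continuous functionals $\{f^i\}$ on $V$ to manufacture the scalar functions $v^{\ns{-1}}$ and their $\widetilde{v^{\ns{-1}}}$ from $\rho_v$ and $\widetilde{\rho_v}$, again verifying~(i) and~(ii) explicitly.

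Your route is cleaner and more conceptual: once you accept Grothendieck's identification of $\Fc^\infty(G)\projot V$ with smooth $V$-valued functions (valid under the paper's standing well-behavedness hypothesis), the differentiability of $\rho_v$ and the fact that $\nb(v)$ lands in the right target are automatic, and both conditions of Definition~\ref{def-diffble-map} come for free. The price is that you must also argue that Hochschild--Mostow ``differentiable'' coincides with $C^\infty$ for real analytic $G$ --- true, but an extra lemma you flagged yourself. The paper's approach avoids this by never leaving the intrinsic Definition~\ref{def-diffble-map}; it is more elementary and self-contained, at the cost of the explicit two-step verification. Either is acceptable.
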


\begin{proof}
Let $V$ be a left $\Fc^\infty(G)$-comodule. We first show that \eqref{aux-G-action} indeed defines an action. To this end, it is enough to observe that
\begin{align}\label{aux-G-mod-HG-comod}
\begin{split}
&v\cdot (gg') =  v^{\ns{-1}}(gg')v^{\ns{0}}= \D(v^{\ns{-1}})(g,g')\,\projot \,v^{\ns{0}} = \\
& \left(v^{\ns{-1}}\,\projot\,v^{\ns{0}\ns{-1}}\right)(g,g')v^{\ns{0}\ns{0}} = \\
& v^{\ns{-1}}(g)v^{\ns{0}\ns{-1}}(g')v^{\ns{0}\ns{0}} = (v\cdot g)\cdot g'.
\end{split}
\end{align}

\ni Let us now show that the action \eqref{aux-G-action} is a differentiable $G$-action. For any fixed $v\in V$, we show that the map $\rho_v:G \lra V$, given by $\rho_v(g) = v\cdot g$, is differentiable. For any $X\in \Fg$ we have
\begin{align*}
\begin{split}
& \rho_v(g\exp(tX)) = v^{\ns{-1}}(g\exp(tX))v^{\ns{0}} = \\
& v^{\ns{-1}}(g)v^{\ns{0}} + t\widetilde{v^{\ns{-1}}}(g,X,t)v^{\ns{0}} = \rho_v(g) + t\widetilde{\rho_v}(g,X,t),
\end{split}
\end{align*}
where
\begin{equation*}
\widetilde{\rho_v}(g,X,t) := \widetilde{v^{\ns{-1}}}(g,X,t)v^{\ns{0}}.
\end{equation*}
This observation ensures the first condition of Definition \ref{def-diffble-map}. Next we observe for any $Y\in \Fg$ that
\begin{align*}
\begin{split}
& Y(\rho_v)(g\exp(tX)) = \widetilde{\rho_v}(g\exp(tX),Y,0) = \\
& \widetilde{v^{\ns{-1}}}(g\exp(tX),Y,0)v^{\ns{0}}  = Y(v^{\ns{-1}})(g\exp(tX))v^{\ns{0}} = \\
& Y(v^{\ns{-1}})(g)v^{\ns{0}} + t \widetilde{Y(v^{\ns{-1}})}(g,X,t)v^{\ns{0}} = \\
& Y(\rho_v)(g) + t \widetilde{Y(v^{\ns{-1}})}(g,X,t)v^{\ns{0}}.
\end{split}
\end{align*}
Hence, the second requirement of Definition \ref{def-diffble-map} also holds for the map $\rho_v:G \lra V$. We conclude that the $G$-action is differentiable.

\ni Conversely, let $V$ be a locally finite differentiable right $G$-module, and let $v \in V$ be fixed. Then, given $g\in G$, there are $\a^i(g) \in k$, and a basis $v_i \in V$ of the finite dimensional $G$-module containing $v\in V$, so that $v\cdot g = \a^i(g)v_i$. We shall show that $\a^i \in \Fc^{\infty}(G)$. Indeed, for any $X \in \Fg$,
\[
\a^i(g\exp(tX))v_i = \rho_v(g\exp(tX)) = \rho_v(g) + t\widetilde{\rho_v}(g,X,t),
\]
and setting 
\[
\widetilde{\rho_v}(g,X,t) = \widetilde{\a^i}(g,X,t)v_i,
\]
we see that
\[
\a^i(g\exp(tX))v_i = \a^i(g)v_i + t\widetilde{\a^i}(g,X,t)v_i,
\]
that is,
\[
\a^i(g\exp(tX)) = \a^i(g) + t\widetilde{\a^i}(g,X,t).
\]
Let us note also that the mapping $\widetilde{\a^i}:G\times \Fg \times \Rb \to \Rb$ is continuous, being the composition of two continuous mappings $\widetilde{\rho_v}:G\times \Fg \times \Rb \to V$ and $proj_i:V \to \Rb$ given by $\b^jv_j \mapsto \b^i$. Similarly, for any $X,Y \in \Fg$,
\begin{align*}
& Y(\a^i)(g\exp(tX))v_i = \widetilde{\a^i}(g\exp(tX),Y,0)v_i = \widetilde{\rho_v}(g\exp(tX),Y,0) = \\
&  Y(\rho_v)(g\exp(tX)) = Y(\rho_v)(g) + t \widetilde{Y(\rho_v)}(g,X,t)  = \\
&  \widetilde{\rho_v}(g,Y,0)+ t \widetilde{Y(\rho_v)}(g,X,t) = Y(\a^i)(g)v_i + t \widetilde{Y(\rho_v)}^i(g,X,t)v_i
\end{align*}
setting $\widetilde{Y(\rho_v)}(g,X,t) =\widetilde{Y(\rho_v)}^i(g,X,t)v_i$. On the other hand, the continuity of the coaction follows, in view of \cite[Lemma A.2.2]{BonnFlatGersPinc94}, from its linearity. Furthermore, it follows at once from the argument \eqref{aux-G-mod-HG-comod} that \eqref{aux-HG-coact} defines a left $\Fc^\infty(G)$-coaction.
\end{proof}

\subsection{Topological Lie-Hopf algebras}\label{subsect-Lie-Hopf-bicross}

In this subsection we shall revisit the matched pairs of Hopf algebras, from \cite{Majid-book,RangSutl,RangSutl-I-arxiv}, within the category of topological Hopf algebras. In view of these ideas, we shall construct the matched pair Hopf algebra $\Fc^\infty(G_2)\,\tacl\,U(\Fg_1)$, out of a matched pair $(G_1,G_2)$ of groups. We refer the reader to \cite{Majid-book,Maji90} for the details on the matched pairs of (Lie) groups, and matched pairs of Lie algebras. 

\medskip

\ni Let  $\Uc$ and $\Fc$ be two (topological) Hopf algebras. A right coaction
$$\Db:\Uc\lra\Uc\,\projot\, \Fc , \qquad \Db(u)  = u\ns{0}\, \projot\, u\ns{1}$$
equips $\Uc$ with a right $\Fc$-comodule coalgebra structure if the conditions
\begin{align*}
& u\ns{0}\ps{1}\,\projot\, u\ns{0}\ps{2}\,\projot\, u\ns{1}= u\ps{1}\ns{0}\,\projot\, u\ps{2}\ns{0}\,\projot\, u\ps{1}\ns{1}u\ps{2}\ns{1},\\
& \ve(u\ns{0})u\ns{1}=\ve(u)1,
\end{align*}
are satisfied for any $u\in \Uc$. One then forms a cocrossed product topological coalgebra $\Fc\,\topcl\,\Uc$, that has $\Fc\,\projot\, \Uc$ as the underlying t.v.s. and
\begin{align*}
&\Delta(f\,\topcl\, u)= f\ps{1}\,\topcl\, u\ps{1}\ns{0}\,\projot\,  f\ps{2}u\ps{1}\ns{1}\,\topcl\, u\ps{2}, \\
&\ve(f\,\topcl\, u)=\ve(f)\ve(u),
\end{align*}
as the topological coalgebra structure.

\medskip

\ni On the other hand, $\Fc$ is called a left $\Uc$-module algebra if $\Uc$ acts on $\Fc$
\begin{equation*}
\rt : \Uc\,\projot\, \Fc \lra \Fc 
\end{equation*}
such that
\begin{equation*}
u\rt 1=\ve(u)1, \qquad u\rt(fg)=(u\ps{1}\rt f)(u\ps{2}\rt g)
\end{equation*}
for any $u\in \Uc$, and any $f,g\in \Fc$. This time one endows the t.v.s. $\Fc\,\projot\, \Uc$ with an algebra structure, which is  denoted by $\Fc\,\topal\, \Uc$, with $1\,\topal\, 1$ as its unit and the multiplication given by
\begin{equation*}
(f\,\topal\, u)(g\,\topal\, v)=f (u\ps{1}\rt g)\,\topal\, u\ps{2}v.
\end{equation*}

\ni Finally, the pair $(\Fc, \Uc)$ of topological Hopf algebras is called a matched pair of Hopf algebras if $\Uc$ is a right $\Fc$-comodule coalgebra, $\Fc$ is a left $\Uc$-module algebra, and
\begin{align*}
&\ve(u\rt f)=\ve(u)\ve(f), \\ 
&\Delta(u\rt f)= u\ps{1}\ns{0} \rt f\ps{1}\,\projot\, u\ps{1}\ns{1}(u\ps{2}\rt f\ps{2}), \\
&\Db(1)=1\ot 1, \\ 
&\Db(uv)= u\ps{1}\ns{0} v\ns{0}\,\projot\, u\ps{1}\ns{1}(u\ps{2}\rt v\ns{1}),\\ 
& u\ps{2}\ns{0}\,\projot\, (u\ps{1}\rt f) u\ps{2}\ns{1}= u\ps{1}\ns{0}\,\projot\, u\ps{1}\ns{1}(u\ps{2}\rt f),
\end{align*}
for any $u\in\Uc$, and any $f\in \Fc$. One then forms the bicrossed product  Hopf algebra $\Fc\,\tacl\, \Uc$. It has $\Fc\,\topcl\, \Uc$ as the underlying coalgebra, $\Fc\,\topal\, \Uc$ as the underlying algebra, and its antipode is defined by
\begin{equation*}
S(f\,\tacl\, u)=(1\,\tacl\, S(u\ns{0}))(S(fu\ns{1})\,\tacl\, 1) , \qquad \forall\, f \in \Fc , \, \forall\,u \in \Uc.
\end{equation*}


\begin{definition}\label{Lie-module}
A left topological $\Fg$-module over a topological Lie algebra $\Fg$ is a t.v.s. $V$ such that the left $\Fg$-module structure map $\Fg\,\projot\, V \lra V$ is continuous.
\end{definition}

\begin{example}{\rm
Let $\Fg$ be a topological Lie algebra. The symmetric algebra $S(\Fg^\ast)$ is a (right) topological $\Fg$-module by the coadjoint action via \cite[Coroll. A.2.8]{BonnFlatGersPinc94}.
}\end{example}


\ni Now let $\Fc$ be a commutative topological Hopf algebra on which a topological Lie algebra $\Fg$ acts by derivations. We endow the vector space $\Fg\,\projot\,\Fc$ with the following bracket:
\begin{equation}\label{bracket}
[X\,\projot\, f, Y\,\projot\, g]= [X,Y]\,\projot\, fg+ Y\,\projot\, \ve(f)X\rt g- X\,\projot\, \ve(g) Y\rt f.
\end{equation}

\begin{lemma}
Let a topological Lie algebra $\Fg$ act on a commutative topological Hopf algebra $\Fc$ by derivations, and $\ve(X\rt f)=0$ for any $X\in \Fg$ and $f\in \Fc$. Then  the bracket \eqref{bracket} endows  $\Fg\,\projot\, \Fc$ with a topological Lie algebra strucure.
\end{lemma}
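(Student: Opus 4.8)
The plan is to verify that the bracket \eqref{bracket} makes $\Fg\,\projot\,\Fc$ a Lie algebra by checking bilinearity, antisymmetry, and the Jacobi identity, and then to observe that continuity is automatic from the continuity of the constituent operations. Since $\Fc$ is commutative, the term $[X,Y]\,\projot\,fg$ is visibly antisymmetric, while the two correction terms $Y\,\projot\,\ve(f)X\rt g$ and $-X\,\projot\,\ve(g)Y\rt f$ swap into one another under $(X\,\projot\,f)\leftrightarrow(Y\,\projot\,g)$; so antisymmetry is immediate. Bilinearity over $k$ is clear from the definition since $\ve$, the $\Fg$-action, and multiplication in $\Fc$ are all bilinear.

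The substance is the Jacobi identity. First I would reduce the problem using bilinearity: it suffices to check the cyclic sum of $[[X\,\projot\,f,\,Y\,\projot\,g],\,Z\,\projot\,h]$ vanishes. I would expand each of the three nested brackets using \eqref{bracket}; here the hypothesis $\ve(X\rt f)=0$ is what keeps the computation manageable, because whenever an inner bracket produces a term of the form $W\,\projot\,(\text{something})$ and we bracket again, the counit applied to that ``something'' either is $\ve$ of a product $fg$ (giving $\ve(f)\ve(g)$) or $\ve$ of an element $X\rt g$ (giving $0$). Concretely, $\ve$ is an algebra map on the commutative Hopf algebra $\Fc$, so $\ve(fg)=\ve(f)\ve(g)$, and the derivation property together with $\ve(X\rt f)=0$ controls all the cross terms. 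Grouping the resulting terms, the ``$[[X,Y],Z]$''-type pieces assemble into the Jacobi identity for $\Fg$ itself (tensored with $fgh$) and hence cancel; the mixed terms involving one bracket in $\Fg$ and one action pair up and cancel using that the $\Fg$-action on $\Fc$ is by derivations and that $\Fg$ acts as a Lie algebra, i.e. $[X,Y]\rt f = X\rt(Y\rt f) - Y\rt(X\rt f)$; and the purely action-theoretic terms cancel because $\Fc$ is commutative and $\ve$ is multiplicative. I expect the bookkeeping of these roughly nine-to-twelve terms, and checking that the coefficients $\ve(\cdot)$ land correctly, to be the main obstacle — not conceptually deep but requiring care.

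Finally, for the topological statement: the bracket \eqref{bracket} is built by composing the bracket of $\Fg$ (continuous, since $\Fg$ is a topological Lie algebra), the multiplication of $\Fc$ (continuous, since $\Fc$ is a topological Hopf algebra), the counit $\ve$ (continuous), and the action $\rt:\Fg\,\projot\,\Fc\lra\Fc$ (continuous by hypothesis), all assembled via the universal property of $\projot$ and continuity of the relevant tensor-product maps on well-behaved spaces. Hence $[\,\cdot\,,\,\cdot\,]:(\Fg\,\projot\,\Fc)\,\projot\,(\Fg\,\projot\,\Fc)\lra \Fg\,\projot\,\Fc$ is continuous, and $\Fg\,\projot\,\Fc$ is a topological Lie algebra.
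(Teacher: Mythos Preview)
Your proposal is correct and follows the same approach as the paper: the paper simply cites \cite{RangSutl-I-arxiv,RangSutl} for the antisymmetry and Jacobi identity (which you instead sketch out explicitly), and then argues continuity of the bracket from continuity of the bracket on $\Fg$ and of the $\Fg$-action on $\Fc$, exactly as you do.
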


\begin{proof}
It is checked in \cite{RangSutl-I-arxiv}, see also \cite{RangSutl}, that the bracket is anti-symmetric, and that the Jacobi identity is satisfied. Moreover, since the bracket on $\Fg$, and the action of $\Fg$ on $\Fc$ are continuous, it follows that the bracket \eqref{bracket} is also continuous.
\end{proof}

\ni Next, let $\Fc$ coacts on $\Fg$ via $\Db_\Fg:\Fg\ra \Fg\,\projot\, \Fc$. Using the action of $\Fg$ on $\Fc$, and the coaction of  $\Fc$  on $\Fg$, we define an action of $\Fg$ on $\Fc\,\projot\, \Fc$ by
\begin{equation}\label{aux-bullet-action}
X\bullet (f^1 \,\projot\, f^2)= X\ns{0}\rt f^1\,\projot\, X\ns{1} f^2 + f^1\,\projot\, X\rt f^2.
\end{equation}
We note that since the $\Fc$-coaction on $\Fg$, and the $\Fg$ action on $\Fc$ are continuous, the action \eqref{aux-bullet-action} is also continuous.

\begin{definition}\label{def-Lie-Hopf}
Let a topological Lie algebra $\Fg$ act on a commutative topological Hopf algebra $\Fc$ by continuous derivations, and $\Fc$ coacts on $\Fg$ continuously. We say that $\Fc$ is a topological $\Fg$-Hopf algebra if
 \begin{enumerate}
   \item  the coaction $\Db_\Fg:\Fg\ra \Fg\,\projot\, \Fc$ is a map of topological Lie algebras,
   \item $\D$ and $\ve$ are $\Fg$-linear, i.e,  $\D(X\rt f)=X\bullet\D(f)$, \quad $\ve(X\rt f)=0$, \quad for any $f\in \Fc$, and any $X\in \Fg$.
    \end{enumerate}
\end{definition}

\ni Following \cite{RangSutl-I-arxiv} we extend the $\Fc$-coaction from $\Fg$ to $U(\Fg)$.

\begin{lemma}\label{U-coaction}
The extension of the coaction $\Db_\Fg:\Fg\ra \Fg\projot \Fc$  to $ \Db:U(\Fg)\ra U(\Fg)\projot \Fc,
$ via
\begin{align}\label{aux-coact-to-Ug}
\begin{split}
& \Db(uu')=u\ps{1}\ns{0}u'\ns{0}\,\projot\,u\ps{1}\ns{1}(u\ps{2}\rt u'\ns{1}),\qquad \forall\, u,u'\in U(\Fg),\\
& \Db(1)=1\,\projot\, 1,
\end{split}
\end{align}
is well-defined.
\end{lemma}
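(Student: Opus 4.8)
The plan is to follow the algebraic argument of \cite{RangSutl-I-arxiv} (see also \cite{RangSutl}), where the analogous statement for $\Db:U(\Fg)\to U(\Fg)\ot\Fc$ is proved, and then to add the one extra ingredient required in the topological setting, namely continuity. Concretely, I would first use \eqref{aux-coact-to-Ug}, say with left-normed bracketing, to define $\Db$ on the tensor algebra $T(\Fg)$, starting from $\Db(1)=1\projot 1$ and $\Db(X)=\Db_\Fg(X)$ for $X\in\Fg$; this is unambiguous on the free algebra. The first point to check --- the induction carried out in \cite{RangSutl-I-arxiv} --- is that the map so defined is \emph{coherently} twisted-multiplicative, i.e. that \eqref{aux-coact-to-Ug} in fact holds for \emph{all} $u,u'\in T(\Fg)$ and not only for $u\in\Fg$. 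This is an induction on the word length of $u$, using only coassociativity of the coproduct and the fact that, $\Fg$ acting on $\Fc$ by derivations, $\Fc$ is a left $U(\Fg)$-module algebra under $\rt$.

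The substantive step is the descent of $\Db$ from $T(\Fg)$ to $U(\Fg)=T(\Fg)/\Ic$, with $\Ic$ the two-sided ideal generated by the elements $r_{X,Y}:=X\ot Y-Y\ot X-[X,Y]$, $X,Y\in\Fg$. A short computation from \eqref{aux-coact-to-Ug}, $\Delta X=X\ot 1+1\ot X$ and commutativity of $\Fc$ gives
\begin{equation*}
\Db(XY)-\Db(YX)=[X\ns{0},Y\ns{0}]\projot X\ns{1}Y\ns{1}+Y\ns{0}\projot X\rt Y\ns{1}-X\ns{0}\projot Y\rt X\ns{1},
\end{equation*}
and, on the other side, using that $\Db_\Fg$ is a morphism of Lie algebras for the bracket \eqref{bracket} (Definition \ref{def-Lie-Hopf}) together with the counitality of $\Db_\Fg$, namely $X\ns{0}\ve(X\ns{1})=X$, one finds that $\Db([X,Y])=\Db_\Fg([X,Y])$ equals exactly the same expression; hence $\Db(r_{X,Y})=0$. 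Since $\Db$ is only twisted-multiplicative on $T(\Fg)$, vanishing on the generators of $\Ic$ does not by itself give vanishing on $\Ic$, and this is the point that needs care. It is handled by two remarks: each $r_{X,Y}$ is primitive in $T(\Fg)$, so $\Delta r_{X,Y}=r_{X,Y}\ot 1+1\ot r_{X,Y}$; and $r_{X,Y}\rt f=0$ for every $f\in\Fc$, because the commutator of two derivations is the bracket. Substituting these into the coherent form of \eqref{aux-coact-to-Ug}, one sees that $\Db(\omega\,r_{X,Y}\,\omega')$ is the image of $\Db(r_{X,Y})=0$ under a composition of linear maps --- multiply by $\omega$ on the left, act by $\omega'$ on the right --- so $\Ic\subseteq\ker\Db$ and $\Db$ descends to a well-defined map on $U(\Fg)$.

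For the topological claim, I would note that every map entering the construction --- the multiplication and the coproduct of $U(\Fg)$, the action $\rt$, the coaction $\Db_\Fg$, and the counit $\ve$ --- is continuous by the standing assumptions, that $U(\Fg)$ carries the strict inductive limit topology and $U(\Fg)\projot\Fc$ is well-behaved, so that the projective tensor products in \eqref{aux-coact-to-Ug} are the completed ones. Hence the map on $T(\Fg)$ is continuous and so is the induced map on the quotient $U(\Fg)$; this is the same observation used for the continuity of the bracket \eqref{bracket} in the preceding lemma. I expect the descent to $U(\Fg)$, rather than the topological bookkeeping, to be the only genuinely delicate part, precisely because $\Db$ fails to be multiplicative on $T(\Fg)$.
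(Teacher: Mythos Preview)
Your proposal is correct and follows essentially the same approach as the paper: the algebraic well-definedness is exactly what the paper cites from \cite{RangSutl-I-arxiv}, and your detailed sketch of the descent to $U(\Fg)$ (using primitivity of $r_{X,Y}$ and $r_{X,Y}\rt f=0$) is sound. For continuity the paper takes a shorter route than your compositional argument, invoking only \cite[Lemma A.2.2]{BonnFlatGersPinc94}, i.e.\ that any linear map out of a countable-dimensional space with its natural (strict inductive limit) topology is automatically continuous.
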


\begin{proof}
It is checked in \cite{RangSutl-I-arxiv} that \eqref{aux-coact-to-Ug} is well-defined. Hence, we just need to show that it is continuous, which follows from the linearity \cite[Lemma A.2.2]{BonnFlatGersPinc94}.
\end{proof}

\ni As a result, we obtain a topological version of \cite[Thm. 2.6]{RangSutl-I-arxiv} as follows.

\begin{theorem}\label{Theorem-Lie-Hopf-matched-pair}
Let $\Fc$ be a commutative topological $\Fg$-Hopf algebra. Then via the coaction of $\Fc$ on $U(\Fg)$ defined above and the natural action of $U(\Fg)$ on $\Fc$, the pair $(U(\Fg), \Fc)$ becomes a matched pair of topological Hopf algebras. Conversely, for a commutative topological Hopf algebra $\Fc$, if $(U(\Fg), \Fc)$ is a matched pair of topological Hopf algebras then $\Fc$ is a topological $\Fg$-Hopf algebra .
\end{theorem}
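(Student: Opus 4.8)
The plan is to follow the blueprint of the purely algebraic statement \cite[Thm. 2.6]{RangSutl-I-arxiv}, adding at each stage the verification that the maps involved are continuous, so that the resulting bicrossed product lives in the category of well-behaved topological Hopf algebras. Concretely, given a commutative topological $\Fg$-Hopf algebra $\Fc$, I would first invoke Lemma \ref{U-coaction} to get a well-defined and continuous coaction $\Db\colon U(\Fg)\ra U(\Fg)\projot\Fc$; the natural action $\rt\colon U(\Fg)\projot\Fc\ra\Fc$ is the algebra extension of the continuous $\Fg$-action on $\Fc$, and continuity on $U(\Fg)$ follows from the universal property together with \cite[Lemma A.2.2]{BonnFlatGersPinc94} exactly as in Lemma \ref{U-coaction}. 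One then has to check, one line at a time, each of the matched-pair axioms listed before Definition \ref{Lie-module}: that $\Db$ makes $U(\Fg)$ a right $\Fc$-comodule coalgebra, that $\rt$ makes $\Fc$ a left $U(\Fg)$-module algebra, and the five compatibility identities. The algebraic content of these verifications is precisely what is done in \cite{RangSutl-I-arxiv}: the strategy is to check the identities first on the generators $\Fg\subset U(\Fg)$, where they are equivalent to the two conditions of Definition \ref{def-Lie-Hopf} (the coaction $\Db_\Fg$ being a morphism of Lie algebras, and $\D,\ve$ being $\Fg$-linear in the sense $\D(X\rt f)=X\bullet\D(f)$, $\ve(X\rt f)=0$), and then to propagate them multiplicatively using the derivation-type formulas \eqref{aux-coact-to-Ug} and the module-algebra property. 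Since $U(\Fg)$ carries the strict inductive limit topology and $\Fc$ is well-behaved, all the structure maps appearing are built from continuous maps by composition, $\projot$, and passage to inductive limits, hence remain continuous; I would isolate this as a single remark rather than repeating it for each axiom.

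For the converse, suppose $(U(\Fg),\Fc)$ is a matched pair of topological Hopf algebras with $\Fc$ commutative. Restricting the action $\rt$ to $\Fg\subset U(\Fg)$ gives a continuous action of $\Fg$ on $\Fc$; that it is by derivations is forced by the module-algebra identity $u\rt(fg)=(u\ps{1}\rt f)(u\ps{2}\rt g)$ evaluated at a primitive $u=X$, using $\D(X)=X\ot 1+1\ot X$. Dually, the comodule-coalgebra coaction $\Db$ on $U(\Fg)$ restricts to a coaction $\Db_\Fg\colon\Fg\ra\Fg\projot\Fc$ once one checks that $\Db$ preserves the primitive elements; this uses the comodule-coalgebra compatibility $u\ns{0}\ps{1}\projot u\ns{0}\ps{2}\projot u\ns{1}=u\ps{1}\ns{0}\projot u\ps{2}\ns{0}\projot u\ps{1}\ns{1}u\ps{2}\ns{1}$ and counitality, again specialized at $u=X$. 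Then the matched-pair axioms $\D(u\rt f)=u\ps{1}\ns{0}\rt f\ps{1}\projot u\ps{1}\ns{1}(u\ps{2}\rt f\ps{2})$ and $\ve(u\rt f)=\ve(u)\ve(f)$, read at $u=X$, give exactly condition (2) of Definition \ref{def-Lie-Hopf}, while the axiom $\Db(uv)=u\ps{1}\ns{0}v\ns{0}\projot u\ps{1}\ns{1}(u\ps{2}\rt v\ns{1})$ at $u=v=X$, combined with $[X,Y]$ being expressible through the multiplication in $U(\Fg)$, yields condition (1), namely that $\Db_\Fg$ is a morphism of topological Lie algebras; continuity of $\Db_\Fg$ and of the restricted action is automatic as restrictions of continuous maps to a closed subspace. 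Finally one remarks that the extension of $\Db_\Fg$ back up to $U(\Fg)$ via \eqref{aux-coact-to-Ug} must agree with the original $\Db$, by uniqueness of the coaction extending a given one on generators and satisfying the comodule-coalgebra multiplicativity, so the two constructions are mutually inverse.

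The only genuinely new work relative to \cite{RangSutl-I-arxiv} is bookkeeping of topologies, and the main obstacle — such as it is — is making sure that every identity which in the algebraic setting is a statement about elements of a tensor product $U(\Fg)\ot\Fc\ot\cdots$ still makes sense and holds in the completed projective tensor product. Here one relies on the hypothesis that all spaces are well-behaved (nuclear Fréchet or nuclear DF), so that $\projot$ is associative, commutes with the relevant (co)limits, and that the natural maps $A\ot B\ra A\projot B$ have dense image, which lets one check equalities of continuous maps on the algebraic tensor product and then extend by continuity. Since $U(\Fg)=\limind U(\Fg)_n$ with the strict inductive limit topology and each $U(\Fg)_n$ is finite-dimensional, $U(\Fg)\projot\Fc=\limind\big(U(\Fg)_n\projot\Fc\big)$, and the coaction and action are the inductive limits of their restrictions; I would state this compatibility once and then treat the remaining computations as identical to the algebraic ones. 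Thus the proof reduces to: (i) cite \cite{RangSutl-I-arxiv} for the algebraic identities, (ii) invoke Lemma \ref{U-coaction} and \cite[Lemma A.2.2]{BonnFlatGersPinc94} for continuity of the extended coaction, (iii) observe that all other structure maps are continuous for the same reasons, and (iv) for the converse, restrict to $\Fg$ and read off Definition \ref{def-Lie-Hopf} from the matched-pair axioms specialized at primitive elements.
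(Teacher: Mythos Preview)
Your proposal is correct and matches the paper's approach: the paper itself states the theorem without proof, presenting it simply as ``a topological version of \cite[Thm. 2.6]{RangSutl-I-arxiv}'' obtained once Lemma \ref{U-coaction} supplies the continuity of the extended coaction. Your outline is in fact more detailed than what the paper provides, but the strategy---cite the algebraic identities from \cite{RangSutl-I-arxiv} and handle continuity via \cite[Lemma A.2.2]{BonnFlatGersPinc94} and the strict inductive limit topology---is exactly the intended one.
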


\ni We are now ready to construct our main example.

\begin{proposition}\label{prop-top-F-U}
Let $(G_1,G_2)$ be a matched pair of real analytic groups, with mutual analytical actions, and let $(\Fg_1,\Fg_2)$ be their Lie algebras. Then, $\Fc^\infty(G_2)$ is a $\Fg_1$-Hopf algebra.
\end{proposition}

\begin{proof}
Since the infinitesimal action
\begin{equation*}
\vp_X:G_2 \lra \Fg_1,\qquad \vp_X(y):=\left.\frac{d}{ds}\right|_{s=0}y\rt\exp(sX)
\end{equation*}
of $G_2$ on $\Fg_1$ is differentiable\footnote[1]{Follows from
\begin{equation*}
\exp(tY)\rt X = X + t\mu(X,Y,t), \qquad \mu(X,Y,0) = Y\rt X,
\end{equation*}
for any $X\in \Fg_1$, and any $Y\in \Fg_2$, which, in turn, follows from the action of $G_2$ on $G_1$ being analytical.}, it follows from Proposition \ref{prop-diff-ble-G-module} that $\Fg_1$ is a right $\Fc^\infty(G_2)$-comodule. Moreover, $\Fg_1$ acts on $\Fc^\infty(G_2)$ by 
\begin{equation*}
X(f):=\left.\frac{d}{ds}\right|_{s=0}\exp(sX)\rt f,
\end{equation*}
for any $X\in \Fg_1$, and any $f\in \Fc^\infty(G_2)$. Indeed,
\begin{align*}
& X(f)(y\exp(tY)) = \left.\frac{d}{ds}\right|_{s=0}(\exp(sX)\rt f )(y\exp(tY)) = \\
& \left.\frac{d}{ds}\right|_{s=0}f((y\exp(tY))\lt \exp(sX)) = \\
& \left.\frac{d}{ds}\right|_{s=0}f((y\lt (\exp(tY)\rt \exp(sX))) (\exp(tY)\lt \exp(sX))) = \\
& \left.\frac{d}{ds}\right|_{s=0}f((y\lt (\exp(tY)\rt \exp(sX)))\exp(tY)) + \\
&\hspace{4cm} \left.\frac{d}{ds}\right|_{s=0}f(y(\exp(tY)\lt \exp(sX)))
\end{align*}
for any $y\in G_2$, and any $Y\in \Fg_2$. Now, since $f\in \Fc^\infty(G_2)$,
\begin{align*}
& \left.\frac{d}{ds}\right|_{s=0}f((y\lt (\exp(tY)\rt \exp(sX)))\exp(tY)) = \\
& \left.\frac{d}{ds}\right|_{s=0}f(y\lt (\exp(tY)\rt \exp(sX))) + \left.\frac{d}{ds}\right|_{s=0} t\widetilde{f}((y\lt (\exp(tY)\rt \exp(sX))),Y,t)
\end{align*}
for some continuous $\widetilde{f}:G_2 \times \Fg_2 \times \Rb \to \Rb$, where
\begin{align*}
& \left.\frac{d}{ds}\right|_{s=0}f(y\lt (\exp(tY)\rt \exp(sX))) = (\exp(tY)\rt X) (f)(y) = \\
& X( f)(y) + t\psi_X(e, Y, t)( f)(y)
\end{align*}
for some continuous $\psi_X: G_2 \times \Fg_2 \times \Fg_1 \to \Fg_1$, due to the fact that the (left) action of $G_2$ on $\Fg_1$ is differentiable.We note also that the composition $(y,Y,t) \mapsto \psi_X(e, Y, t)( f)(y)$ yields yet another continuous map $G_2 \times \Fg_2 \times \Rb \to \Rb$. Finally,
\[
f(y(\exp(tY)\lt \exp(sX))) = f(y) + tf'(y, Y\lt \exp(sX), t)
\]
for some continuous $f': G_2\times \Fg_2 \times \Rb \to \Rb$, and hence 
\[
\left.\frac{d}{ds}\right|_{s=0}f(y(\exp(tY)\lt \exp(sX))) = tf'(y, Y\lt X, t).
\]
As a result, we see that
\[
X(f)(y\exp(tY)) = X( f)(y) + t\widetilde{X(f)}(y,Y,t),
\]
where
\[
\widetilde{X(f)}:G_2\times \Fg_2 \times \Rb \to \Rb, \qquad \widetilde{X(f)}(y,Y,t) := \psi_X(e, Y, t)( f)(y) + f'(y, Y\lt X, t).
\]
Next, for any $Z \in \Fg_2$, let
\[
Z(X(f)):G_2 \lra \Rb, \qquad Z(X(f))(y) := \widetilde{X(f)}(y,Z,0) = (Z\rt X)(f)(y) + (Z\lt X)(f)(y).
\]
Then the claim follows from the observation that both $(Z\rt X)(f) , (Z\lt X)(f)\in \Fc^\infty(G_2)$; the former by the very arguments above, and the latter by $\Fc^\infty(G_2)$ being a differentiable $G_2$-module, \cite[Sect. 4]{HochMost62}.
\end{proof}

\ni Hence, by Theorem \ref{Theorem-Lie-Hopf-matched-pair} and Proposition \ref{prop-top-F-U}, we have the Hopf algebra $\Fc^\infty(G_2) \tacl U(\Fg_1)$.

\section{Hopf-cyclic cohomology for topological Hopf algebras}\label{sect-Hopf-cyclic-top}

In this section we first  recall  the basic definitions and results for Hopf-cyclic cohomology with coefficients in the category of topological Hopf algebras. We continue by  characterizing  the category of coefficient spaces  (SAYD modules) over a topological Hopf algebra $\Hc$ as the category of representations of a topological algebra associated to the Hopf algebra $\Hc$.

\subsection{Hopf-cyclic complex for topological Hopf algebras}\label{subsect-Hopf-cyclic-comp}

\ni We shall include, in this subsection, a brief discussion of Hopf-cyclic cohomology with coefficients in the category of topological Hopf algebras. To this end, we adopt the categorical viewpoint of \cite{Conn83}, see also \cite{Loday-book}, to consider cocyclic modules in the category of topological Hopf algebras.

\ni Let $\Hc$ be a topological Hopf algebra. A character $\d: \Hc\lra k$ is a continuous unital algebra map, and a group-like element $\s\in \Hc$ is the dual object of the character, \ie $\D(\s)=\s\,\projot\, \s$ and $\ve(\s) = 1$. The pair $(\d,\s)$ is called a modular pair in involution (MPI) if
\begin{equation*}
\d(\s)=1, \quad \text{and}\quad  S_\d^2=\Ad_\s,
\end{equation*}
where $\Ad_\s(h)= \s h\s^{-1}$, and $S_\d(h)=\d(h\ps{1})S(h\ps{2})$.

\ni In the presence of a topology, we shall extend the scope of the canonical MPI associated to $\Fc\acl U(\Fg)$, see \cite[Thm. 3.2]{RangSutl}, to $\Fc\tacl U(\Fg)$, at which case the Lie algebra $\Fg$ is allowed to be infinite dimensional.

\ni It follows from \cite[Prop. 1(iii)]{BonnSter05} that $\Fg^\circ\,\projot\,\Fg \cong \End(\Fg)$, and hence there exists a well-defined element
\begin{equation*}
\rho = \sum_{i\in I} f^i\,\projot\,X_i \in \Fg^\circ\,\projot\,\Fg,
\end{equation*}
that corresponds to $\Id \in \End(\Fg)$. This, in turn, results in a well-defined functional
\begin{equation}\label{aux-delta-tr-ad}
\d_\Fg := \sum_{i\in I} X_i\rt f^i : \Fg\lra k.
\end{equation}
Note that in case $\Fg$ is finite dimensional, $\{X_i\,|\,i\in I\}$ and $\{f^i\,|\,i\in I\}$ are dual pair of bases, and we have $\d_\Fg = \Tr\circ \ad$.

\ni Next, let $\Fg=\cup_{n\in \Nb}V_n$ be a sequence of definition \cite[Sect. 13]{Treves-book}, compatible with the $\Fc$-coaction, that is, the right $\Fc$-coaction $\nb:\Fg\to \Fg\,\projot\,\Fc$ restricts to $\nb:V_n\to V_n\,\projot\,\Fc$, for any $n\in \Nb$. 

\ni For instance, if $H$ is a Lie group, and $\Fg$ a Lie algebra admitting a locally finite (differentiable) left $H$-module structure, then $\Fg = \sum_{n\in \Nb}V_n$ for the finite dimensional $H$-submodules $V_n$, $n\in \Nb$. In this case, $\nb:\Fg\to \Fg\,\projot\,\Fc^\infty(H)$ restricts to $\nb:V_n\to V_n\,\projot\,\Fc^\infty(H)$ for any $n\in \Nb$. This, in turn, is quite likely to happen in many examples. If $(G_1,G_2)$ is a matched pair of Lie groups, with Lie algebras $(\Fg_1,\Fg_2)$, then $\Fg_1$ being a finite dimensional (left) $G_2$-module, it is a locally finite $G_2$-module. Furthermore, in the infinite dimensional case there is the ``primitive Lie-Cartan pseudogroups'' of \cite{Cart09}, whose matched pair decompositions are discussed in \cite{MoscRang09}. More explicitly, for any primitive Lie-Cartan pseudogroup $\Pi$, letting $\Diff_\Pi$ be the globally defined diffeomorphisms of type $\Pi$, the Kac decomposition \cite{Kac68} yields $\Diff_\Pi = G_\Pi \times N_\Pi$, where $G_\Pi$ is the subgroup of all affine transformations of $\Rb^n$ which are in $\Diff_\Pi$, and $N_\Pi$ is the subgroup of those diffeomorphisms of $\Rb^n$ that preserve the origin to order 1. Therefore, we see that even in this case, we have a finite dimensional  Lie algebra $\Fg_\Pi$ of $G_\Pi$, being a locally finite $N_\Pi$-module.

\ni For any such topological $\Fg$-Hopf algebra $\Fc$, let
\begin{equation}\label{aux-sigma-det}
\s = \underset{n\lra\infty}{\lim}\s_n \in \Fc,
\end{equation}
where $\s_n \in \Fc$ is defined as the determinant of the first order matrix coefficients on the finite dimensional $\cup_{k=1}^nV_k \subseteq \Fg$. By \cite[Lemma 3.1]{RangSutl}, $\s_n$ is a group-like for any $n\in \Nb$, and since the comultiplication $\D:\Fc\lra \Fc\,\projot\,\Fc$ is continuous, we have $\D(\s) = \D(\underset{n\lra\infty}{\lim}\s_n)=\underset{n\lra\infty}{\lim}\D(\s_n) = \s\,\projot\,\s$. That is, $\s\in \Fc$ is also a group-like.

\begin{lemma}
The group-like $\s\in \Fc$ is independent of the choice of the sequence of definition.
\end{lemma}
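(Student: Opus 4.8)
The plan is to show that two sequences of definition give the same group-like $\s$, by comparing them through a common refinement. First I would recall that $\s$ was built as a limit $\s=\lim_{n\to\infty}\s_n$ where $\s_n\in\Fc$ is the determinant of the first-order matrix coefficients of the $\Fc$-coaction restricted to the finite-dimensional subspace $W_n:=\bigcup_{k=1}^n V_k\subseteq\Fg$. The first observation is that $\s_n$ depends only on the subspace $W_n$ (together with the restricted coaction $\nb|_{W_n}$), not on how $W_n$ was filtered; this is immediate from the definition of the determinant of an endomorphism-valued matrix of coefficients, which is basis-independent. So the content is: if $\Fg=\bigcup_n V_n=\bigcup_m V'_m$ are two sequences of definition both compatible with the coaction, then the two limits agree.

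Next I would build a common refinement. Given the two increasing exhaustions $\{W_n\}$ and $\{W'_m\}$ by finite-dimensional $\Fc$-subcomodules, for each $n$ there is, by the exhaustion property and finite-dimensionality of $W_n$, some $m(n)$ with $W_n\subseteq W'_{m(n)}$, and vice versa; moreover, because the coaction is compatible with both, the finite-dimensional spaces $W_n+W'_m$ are again $\Fc$-subcomodules, so $\{W_n+W'_n\}_n$ is itself a sequence of definition compatible with the coaction. The key algebraic input is the multiplicativity of the determinant group-likes under inclusion of subcomodules: if $W\subseteq W'$ are finite-dimensional $\Fc$-subcomodules of $\Fg$, then choosing a complement and using that the coaction on $W'$ is block upper-triangular with respect to $W$, the determinant coefficient $\s_{W'}$ equals $\s_W\cdot\s_{W'/W}$ in $\Fc$, where $\s_{W'/W}$ is the determinant coefficient of the induced coaction on the quotient. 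This is exactly the computation behind \cite[Lemma 3.1]{RangSutl}, and I would invoke it (or reprove the one-line version) to conclude that along the refined sequence $\{W_n+W'_n\}$ the partial products $\s_{W_n+W'_n}$ differ from $\s_{W_n}$ by the quotient determinant, while along the original sequences the partial products are cofinal with each other.

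Then I would assemble the limits. Using continuity of the multiplication $\D$-compatibly — more precisely, continuity of the maps involved and the fact that $\Fc$ is well-behaved, hence Hausdorff, so limits are unique — I would argue that $\lim_n\s_{W_n}$, $\lim_m\s_{W'_m}$, and $\lim_n\s_{W_n+W'_n}$ all coincide: each pair is cofinal or related by the quotient-determinant factor, and one checks that the quotient factors converge to $1$ because, in the limit, every vector of $\Fg$ eventually lies in $W_n$, so the "new directions" contributing to $\s_{W_n+W'_n}/\s_{W_n}$ become trivial. Concretely, for fixed basis vectors the first-order matrix coefficients stabilize, so the determinant coefficients form an eventually constant net on each finite-dimensional piece, forcing the three limits to agree in the Hausdorff t.v.s.\ $\Fc$.

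\textbf{Main obstacle.} The delicate point is not the algebra but the topology: making precise in what sense $\s=\lim_n\s_n$ converges and that this limit is independent of cofinal reindexing, given that $\Fc$ is only a well-behaved (nuclear Fr\'echet or dual-Fr\'echet) space rather than, say, normed. I expect the real work to be in verifying that the net $\{\s_n\}$ is Cauchy in $\Fc$ and that passing to the common refinement does not change the limit — this requires knowing that the first-order matrix coefficients, as elements of $\Fc$, stabilize on each finite-dimensional subspace, i.e.\ that $\nb$ restricted to $W_n$ does not ``see'' the larger ambient filtration. Once that stabilization is in hand, uniqueness of limits in the Hausdorff space $\Fc$ closes the argument; the block-triangular determinant identity from \cite[Lemma 3.1]{RangSutl} supplies the only nontrivial algebraic step.
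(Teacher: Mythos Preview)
Your approach is considerably more elaborate than the paper's. The paper's proof is two sentences: it uses only direct cofinality---for each $n$ there is $n_0$ with $V_n\subseteq W_{n_0}$---and from this concludes (in admittedly elliptical language) that ``$\s'$ agrees with $\s$ on $V_n$'' for every $n$, hence $\s=\s'$. It does not build a common refinement, does not invoke the block-triangular determinant identity $\s_{W'}=\s_W\cdot\s_{W'/W}$ explicitly, and does not discuss convergence of quotient factors. So while your algebraic ingredients (basis-independence of $\s_W$, multiplicativity under inclusion) are the right ones underlying both arguments, the paper takes the shortcut of comparing the two filtrations directly rather than through a third.

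There is, however, a genuine gap in your argument that the paper's shortcut avoids. Your claim that the quotient factors $\s_{(W_n+W'_n)/W_n}$ converge to $1$ ``because every vector of $\Fg$ eventually lies in $W_n$'' does not hold up: even though each fixed vector is eventually in $W_n$, the quotient $(W_n+W'_n)/W_n$ need not shrink---its dimension can grow without bound (take e.g.\ $W_n=\mathrm{span}\{e_1,\dots,e_n\}$ and $W'_n=\mathrm{span}\{e_1,\dots,e_{2n}\}$), and the corresponding quotient determinant is then a genuinely nontrivial group-like for every $n$. The sentence ``the determinant coefficients form an eventually constant net on each finite-dimensional piece'' conflates stabilization of the individual matrix coefficients $f^j_i$ (which is true) with stabilization of $\s_n$ itself (which is false: $\s_{n+1}=\s_n\cdot\s_{V_{n+1}/V_n}$). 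To repair this you would need an argument that does not pass through the common refinement but instead interlaces the two original sequences directly---choose $n_1<n_2<\cdots$ and $m_1<m_2<\cdots$ with $V_{n_k}\subseteq W_{m_k}\subseteq V_{n_{k+1}}$ and use that $(\s_{V_{n_k}})$ and $(\s'_{W_{m_k}})$ are subsequences of convergent sequences whose terms are sandwiched by the multiplicativity relation---which is closer to what the paper is gesturing at.
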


\begin{proof}
Let $\Fg=\cup_{n\in\Nb}V_n = \cup_{n\in\Nb}W_n$ be two sequence of definitions. Accordingly we have two sequences $(\s_n)_{n\in\Nb} \subseteq \Fc$, obtained as the first order matrix coefficients of the coaction $V_n\lra V_n\,\projot\,\Fc$, and $(\s'_n)_{n\in\Nb} \subseteq \Fc$, obtained as the first order matrix coefficients of the coaction $W_n\lra W_n\,\projot\,\Fc$. Let then $\s := \underset{n\lra\infty}{\lim}\s_n \in \Fc$, and similarly $\s' = \underset{n\lra\infty}{\lim}\s'_n\in\Fc$.

\ni Since a sequence of definition consists of an increasing sequence of subspaces, for any $n\in \Nb$, there is $n_0\in\Nb$ such that $V_n \subseteq W_{n_0}$, and hence $\s'\in\Fc$ agrees with $\s\in\Fc$ on $V_n$ for any $n\in\Nb$. We thus conclude that $\s=\s'$.
\end{proof}

\ni Accordingly, we have the following generalization of \cite[Thm. 3.2]{RangSutl}.

\begin{theorem}
Let  $\Fg$ be a topological Lie algebra, and $\Fc$ a topological $\Fg$-Hopf algebra. Let also $\Fg$ have a sequence of definition compatible with the $\Fc$-coaction. Then $\d:U(\Fg)\lra k$ being the extension of the functional $\d_\Fg:\Fg\lra k$ of \eqref{aux-delta-tr-ad}, and $\s\in \Fc$ the group-like defined by \eqref{aux-sigma-det}, the pair $(\d,\s) := (\ve\,\projot\,\d,\,\s\,\projot\,1 )$ is an MPI for the Hopf algebra $\Fc\,\tacl\, U(\Fg)$.
\end{theorem}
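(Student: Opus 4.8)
The plan is to reduce the statement to its finite-dimensional counterpart, namely \cite[Thm. 3.2]{RangSutl}, by a limiting argument compatible with the topology. Recall that to verify $(\d,\s)$ is an MPI for $\Fc\,\tacl\,U(\Fg)$ one must check two things: first that $\d(\s)=1$, and second that $S_\d^2=\Ad_\s$ on the bicrossed product Hopf algebra. Both $\d$ and $\s$ are defined by limits over a sequence of definition $\Fg=\cup_n V_n$, and since by the previous lemma the group-like $\s$ is independent of that choice, I am free to fix one convenient sequence of definition. The key structural input is that the matched pair $(\Fc,U(\Fg))$ of Theorem \ref{Theorem-Lie-Hopf-matched-pair} restricts, via the compatible sequence of definition, to matched pairs involving the finite-dimensional pieces: on each $\cup_{k=1}^n V_k$ one has a finite-dimensional Lie subalgebra-like object with a sub-Lie-Hopf-subalgebra $\Fc_n\subseteq\Fc$ (generated by the matrix coefficients of the coaction restricted to $\cup_{k\le n}V_k$) to which the finite-dimensional result \cite[Thm. 3.2]{RangSutl} applies, yielding a canonical MPI $(\d_n,\s_n)$.

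Concretely, I would proceed as follows. First, I would verify $\d(\s)=1$. By definition $\d_\Fg=\sum_i X_i\rt f^i$ corresponds to $\Id\in\End(\Fg)$, and $\s=\lim_n\s_n$ where $\s_n$ is the determinant of the first-order matrix coefficients on $\cup_{k\le n}V_k$. Evaluating $\d$, which is the algebra extension of $\d_\Fg$ to $U(\Fg)$, against the action $\rt$ and using the $\Fg$-linearity of $\ve$ (condition (2) of Definition \ref{def-Lie-Hopf}, $\ve(X\rt f)=0$) together with the explicit formula for the coaction, the pairing $\d(\s)$ computes to the trace of the identity over the relevant directions, which in the finite-dimensional truncation is exactly the content of the finite-dimensional statement; passing to the limit (legitimate since $\d$ is continuous and $\s$ is the limit of $\s_n$) gives $\d(\s)=\lim_n\d_n(\s_n)=1$. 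Second, for $S_\d^2=\Ad_\s$ on $\Fc\,\tacl\,U(\Fg)$, I would check the identity on algebra generators, i.e. on elements of the form $f\,\tacl\,1$ with $f\in\Fc$ and $1\,\tacl\,X$ with $X\in\Fg$, since both $S_\d^2$ and $\Ad_\s$ are algebra automorphisms (the former because $S_\d$ is an anti-automorphism twisted by a character, the latter manifestly). On $1\,\tacl\,X$, the verification lives inside $U(\Fg)$ and a single finite-dimensional piece $V_n\ni X$, where \cite[Thm. 3.2]{RangSutl} gives $S_{\d_n}^2(X)=\Ad_{\s_n}(X)$; one then checks this is stable as $n$ grows and matches the global $\s$ by the independence lemma. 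On $f\,\tacl\,1$, the antipode formula $S(f\,\tacl\,u)=(1\,\tacl\,S(u\ns{0}))(S(fu\ns{1})\,\tacl\,1)$ together with commutativity of $\Fc$ reduces the computation to $\Fc$, where again $f$ lies in some $\Fc_n$ and the finite-dimensional result applies; continuity of all structure maps lets one conclude for the limit $\s$.

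The main obstacle I expect is the bookkeeping of compatibility between the truncations and the limits: one must ensure that the finite-dimensional Lie-Hopf structures on the pieces $(\Fc_n,U(\cup_{k\le n}V_k))$ genuinely assemble, that the canonical MPIs $(\d_n,\s_n)$ furnished by \cite[Thm. 3.2]{RangSutl} are mutually compatible (i.e. $\d_{n+1}$ restricts to $\d_n$, and $\s_{n+1}$ agrees with $\s_n$ in the appropriate sense), and that the relevant limits exist in the strict inductive limit / projective tensor topology so that ``$\lim$'' commutes with the structure maps $\D$, $S$, $\rt$, $\Db$. The prior lemma on independence of $\s$ from the sequence of definition does much of this work, and the continuity statements established in Lemma \ref{U-coaction} and throughout Section 2.2 supply the rest; but marshalling them carefully — particularly handling the antipode $S_\d$, which mixes the action $\rt$ of $U(\Fg)$ on $\Fc$ with the coaction $\Db$ and hence couples all the pieces — is where the real care is needed. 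A secondary subtlety is that $\Fg$ is only assumed countable-dimensional, not locally finite as a comodule in general, so one must genuinely use the hypothesis that a compatible sequence of definition exists in order to produce the finite-dimensional truncations at all.
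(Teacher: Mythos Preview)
Your overall strategy --- reduce to the finite-dimensional result \cite[Thm.~3.2]{RangSutl} using the sequence of definition --- is exactly what the paper does. However, the paper's argument is far more direct than yours, and your elaboration introduces a claim that is not justified by the hypotheses.

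Specifically, you assert that the matched pair $(\Fc,U(\Fg))$ restricts to matched pairs $(\Fc_n,U(\cup_{k\le n}V_k))$ on the finite-dimensional pieces, and then apply \cite[Thm.~3.2]{RangSutl} to each of these. But a sequence of definition consists merely of an increasing family of finite-dimensional \emph{subspaces} $V_n\subseteq\Fg$, compatible with the $\Fc$-coaction; there is no assumption that $\cup_{k\le n}V_k$ is a Lie subalgebra, so $U(\cup_{k\le n}V_k)$ need not make sense, and even if it did there is no reason the $\Fg$-action on $\Fc$ should restrict to give a genuine $\Fg$-Hopf subalgebra $\Fc_n$. Your hedge ``Lie subalgebra-like object'' signals that you sensed this, but the subsequent argument relies on having honest matched pairs.

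The paper sidesteps this entirely. It simply observes that the MPI identities are checked on elements (indeed on generators), and that any given element $X\in\Fg$ lies in some finite-dimensional $\cup_{k\le n_0}V_k$; one then follows the \emph{proof} (rather than the statement) of \cite[Thm.~3.2]{RangSutl}, whose computations only ever involve finitely many basis vectors and their matrix coefficients at a time. No global truncated matched pair is needed, and no limiting argument for $\d(\s)$ or $S_\d^2$ is required beyond what is already implicit in the definition $\s=\lim_n\s_n$ and the continuity of the structure maps.
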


\begin{proof}
It suffices to prove the MPI condition on $f\,\tacl\,1,\, 1 \,\tacl\,X \in \Fc\,\tacl\, U(\Fg)$ for any $f \in \Fc$, and any $X \in \Fg$. Since a sequence of definition consists of an increasing sequence of subspaces, for any $X \in \Fg$ there is $n_0 \in \Nb$ such that $X \in \cup_{n=1}^{n_0}V_n$, which is finite dimensional. The claim then follows essentially from the proof of \cite[Thm. 3.2]{RangSutl}, see also \cite[Thm. 3.2]{RangSutl-I-arxiv}.
\end{proof}


\ni Stable anti-Yetter-Drinfeld (SAYD) modules appeared  first in \cite{HajaKhalRangSomm04-I,JaraStef06} as the generalizations of modular pairs in involution \cite{ConnMosc98} . In the rest of this subsection we upgrade them to the level of topological Hopf algebras.

\begin{definition}
Let $V$ be a topological right $\Hc$-module by $V \,\projot\, \Hc \to V$, $v\,\projot\,h \to v\cdot h$, and left $\Hc$-comodule via $\Db:V \to \Hc\,\projot\, V$, $\Db(v)=v\ns{-1}\,\projot\,v\ns{0}$. We say that $V$ is an AYD module over $\Hc$ if
\begin{equation*}
\Db(v\cdot h)= S(h\ps{3})v\ns{-1}h\ps{1}\,\projot\, v\ns{0}\cdot h\ps{2},
\end{equation*}
for any $v\in V$ and $h\in \Hc$. Moreover, $V$ is called stable if
\begin{equation*}
v\ns{0} \cdot v\ns{-1}=v,
\end{equation*}
for any $v\in V$.
\end{definition}

\ni Similar to the algebraic case, any MPI defines a one dimensional SAYD module and all one dimensional SAYD modules come this way.

\begin{proposition}
Let $\Hc$ be a topological Hopf algebra, $\s\in\Hc$ a group-like element, and $\d\in\Hc^\circ$ a character. Then, $(\d,\s)$ is an MPI if and only if $\,^\s k_\d$ is a SAYD module over $\Hc$.
\end{proposition}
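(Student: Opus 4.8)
The plan is to verify the SAYD conditions directly for the one-dimensional space $\,^\s k_\d$, which is $k$ viewed as a right $\Hc$-module via $1\cdot h = \d(h)1$ and as a left $\Hc$-comodule via $1\mapsto \s\,\projot\,1$. Since everything reduces to scalar identities, the continuity requirements are automatic: the module structure map $k\projot\Hc\to k$ is $\d$ (continuous, as $\d$ is a character), and the comodule structure map $k\to \Hc\projot k$ is $h\mapsto \s\,\projot\,h$ (continuous). So the topological overhead is negligible and the argument is essentially the algebraic one, which I would organize as follows.

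First I would translate the AYD condition. Plugging $v=1$, so that $v\ns{-1}\,\projot\,v\ns{0} = \s\,\projot\,1$ and $v\cdot h = \d(h)1$, the left-hand side becomes $\Db(\d(h)1) = \d(h)\,\s\,\projot\,1$, while the right-hand side becomes $S(h\ps{3})\s h\ps{1}\,\projot\,\d(h\ps{2})1 = S_\d(h\ps{2})'$-type expression; more precisely it equals $S(h\ps{3})\,\s\, h\ps{1}\d(h\ps{2})\,\projot\,1$. Using $\d(h\ps{2})h\ps{1} = $ (by definition) and the relation $S_\d(h) = \d(h\ps{1})S(h\ps{2})$, together with the antipode axiom, I would show the AYD identity is equivalent to $\s h \s^{-1} = S_\d^2(h)$ for all $h$, i.e. to $\Ad_\s = S_\d^2$. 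This is the standard computation: one reorganizes $S(h\ps{3})\s h\ps{1}\d(h\ps{2})$ using coassociativity and the counit/antipode axioms to recognize $\s\cdot(\text{something}) = (\text{something})\cdot\s$ forcing $S_\d^2 = \Ad_\s$, while the counit-type side condition $\ve(\s)=1$ is built into $\s$ being group-like and plays a bookkeeping role. Second, I would handle stability: the condition $v\ns{0}\cdot v\ns{-1} = v$ with $v=1$ reads $1\cdot\s = \d(\s)1 = 1$, i.e. $\d(\s)=1$. Thus stability of $\,^\s k_\d$ is literally equivalent to $\d(\s)=1$.

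Combining the two, $\,^\s k_\d$ is a SAYD module if and only if both $\Ad_\s = S_\d^2$ and $\d(\s)=1$ hold, which is exactly the definition of $(\d,\s)$ being an MPI. For the converse direction one runs the same equivalences backwards: an MPI gives $\d(\s)=1$ (hence stability) and $S_\d^2 = \Ad_\s$ (hence the AYD condition), so $\,^\s k_\d$ is SAYD.

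The only genuine point requiring care — the ``main obstacle'' such as it is — is the bookkeeping in the AYD computation: one must be careful that $\d$ appearing inside $S_\d$ and $\d$ appearing as the module action are consistently tracked through the coproduct legs, and that group-likeness of $\s$ (giving $\s^{-1} = S(\s)$ and $\ve(\s)=1$) is invoked at the right spots so that $S(h\ps{3})\s h\ps{1}\d(h\ps{2})$ genuinely collapses to $\s S_\d^2(S^{-1}(h))$ or the appropriate rearrangement matching $\d(h)\s$. Once the scalar identity is pinned down correctly this is immediate; there is no analytic subtlety beyond observing that all structure maps in sight are continuous because they are either $\d$, or multiplication by the fixed group-like $\s$, both of which are continuous in a topological Hopf algebra.
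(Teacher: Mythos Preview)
Your proposal is correct and matches what the paper does: the paper offers no proof here, simply asserting that the algebraic argument carries over verbatim (``Similar to the algebraic case\ldots''), and your direct verification of the AYD and stability conditions for $^\s k_\d$ is exactly that standard algebraic argument, together with the (correct) remark that the module and comodule structure maps are automatically continuous.
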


\ni We next define the Hopf-cyclic cohomology, with SAYD coefficients, of topological module coalgebras. To this end we first recall the tensor product of topological modules over a topological algebra from \cite[Def. 1.7]{Tayl72}. Let $A$ be a topological algebra, $M$ a topological right $A$-module, and $N$ a topological left $A$-module. Then, $M\,\projhat_A\,N$ is defined to be the vector space $(M\times N)/W$, equipped with the quotient topology, where
\begin{equation*}
W:={\rm Span}\{(ma,n)-(m,an)\mid m\in M,\,n\in N,\,a\in A\} \subseteq M\times N.
\end{equation*}
We note also from \cite[Prop. 4.5]{Treves-book} that, as a topological space, $M\,\projhat_A\,N$ is Hausdorff if and only if $W$ is closed, see also \cite[Prop. 1.5]{Tayl72}, as well as \cite[Sect. II.6.1]{Schaefer-book} to note that the quotient topology is also locally convex.

\ni Let $V$ be a right-left SAYD module over a topological Hopf algebra $\Hc$, and $\Cc$ a topological $\Hc$-module coalgebra via the continuous action $\Hc\,\projot\, \Cc \lra \Cc$, that is,
\begin{equation*}
\D(h\cdot c) = (h\ps{1} \cdot c\ps{1})\,\projot\,(h\ps{2} \cdot c\ps{2}),\qquad \ve(h\cdot c) = \ve(h)\ve(c).
\end{equation*}
We then define the topological Hopf-cyclic complex of $\Cc$, with coefficients in $V$, under the symmetry of $\Hc$ by
\begin{equation*}
C_{\top}^n(\Cc,\Hc,V):= V\,\projhat_\Hc\, \Cc^{\projot\, n+1}, \quad n\ge 0,
\end{equation*}
equipped with the face operators
\begin{align*}
\begin{split}
& \p_i: C_{\top}^n(\Cc,\Hc,V)\lra C_{\top}^{n+1}(\Cc,\Hc,V), \qquad 0\le i\le n+1,\\
&\p_i(v\,\projhat_\Hc\, c^0\,\oprojdots\, c^n)=v\,\projhat_\Hc \,c^0\,\oprojdots\, c^i\ps{1}\,\projhat\, c^i\ps{2}\,\oprojdots\, c^n, \quad 0\le i \leq n,\\
&\p_{n+1}(v\,\projhat_\Hc \,c^0\,\oprojdots \,c^n)=v\ns{0}\,\projhat\, c^0\ps{2} \projhat \,c^1\,\oprojdots\, c^n\,\projhat \, v\ns{-1}\cdot c^0\ps{1},
\end{split}
\end{align*}
the degeneracy operators
\begin{align*}
\begin{split}
& \s_j: C_{\top}^n(\Cc,\Hc,V)\lra C_{\top}^{n-1}(\Cc,\Hc,V), \qquad 0\le j\le n-1, \\
&\s_j (v\,\projhat_\Hc \,c^0\,\oprojdots \,c^n)= v\,\projhat_\Hc \, c^0\,\oprojdots\, \ve(c^{j+1})\,\oprojdots\, c^n,
\end{split}
\end{align*}
and the cyclic operator
\begin{align}\label{aux-cyclic-opt-module-coalg}
\begin{split}
& \tau: C_{\top}^n(\Cc,\Hc,V)\lra C_{\top}^{n}(\Cc,\Hc,V), \\
&\tau(v\,\projhat_\Hc \,c^0\,\oprojdots \,c^n)=v\ns{0}\,\projhat_\Hc \, c^1\,\oprojdots\, v\ns{-1}\cdot c^0.
\end{split}
\end{align}
Then, the graded module $C^\ast_{\top}(\Cc,\Hc,V)$ becomes a cocyclic module \cite{HajaKhalRangSomm04-II}, with the Hochschild coboundary
\begin{equation*}
b: C_{\top}^{n}(\Cc,\Hc,V)\lra C_{\top}^{n+1}(\Cc,\Hc,V), \qquad  b:=\sum_{i=0}^{n+1}(-1)^i\p_i,
\end{equation*}
and the Connes boundary operator
\begin{equation*}
B: C_{\top}^{n}(\Cc,\Hc,V)\lra C_{\top}^{n-1}(\Cc,\Hc,V), \qquad B=\sum_{k=0}^n(-1)^{kn}\tau^k\s_{-1},
\end{equation*}
which satisfy $b^2=B^2=(b+B)^2=0$ by \cite{Conn83}. We note that
\begin{equation*}
\s_{-1} = \s_{n-1}\circ\tau:C_{\top}^{n}(\Cc,\Hc,V)\lra C_{\top}^{n-1}(\Cc,\Hc,V)
\end{equation*}
is the extra degeneracy operator.

\begin{definition}
The Hopf-cyclic cohomology of the topological $\Hc$-module coalgebra $\Cc$, with coefficients in a SAYD module $V$ over $\Hc$, is  denoted by $HC^\ast_{\top}(\Cc,\Hc,V)$, and is defined to be the total cohomology of the bicomplex
\begin{align*}
CC_{\top}^{p,q}(\Cc,\Hc,V) = \begin{cases} 
V\,\projhat_\Hc \Cc^{\projot\, q-p},& \text{if} \quad 0\le p\le q, \\
0, & \text{otherwise.}
\end{cases}
\end{align*}
The periodic Hopf-cyclic cohomology of the topological $\Hc$-module coalgebra $\Cc$, with coefficients in a SAYD module $V$ over $\Hc$, is denoted by $HP^\ast_{\top}(\Cc,\Hc,V)$, and it is defined to be the total cohomology of the bicomplex
\begin{align*}
CC_{\top}^{p,q}(\Cc,\Hc,V) = \begin{cases} 
V\,\projhat_\Hc \Cc^{\projot\, q-p},& \text{if} \quad p\le q, \\
0, & \text{otherwise.}
\end{cases}
\end{align*}
\end{definition}

\ni Furthermore, it follows from the proof of \cite[Prop. 1.5]{Tayl72} that, in case $\Cc=\Hc$ a module coalgebra with the left regular action of $\Hc$, the map
\begin{align}
\begin{split}
& \Ic:V\,\projhat_\Hc\,\Hc^{\projhat\,n+1} \lra V\,\projot\,\Hc^{\projot\,n} \\
& v\,\projhat_\Hc\,h^0\,\oprojdots\,h^n\mapsto v\cdot h^0\ps{1}\,\projot\,S(h^0\ps{2})(h^1\,\oprojdots\,h^n)
\end{split}
\end{align}
is a homeomorphism. In this case, the cocyclic structure is transformed into the one with the face operators
\begin{align*}
\begin{split}
& \p_i: C_{\top}^n(\Hc,V)\ra C_{\top}^{n+1}(\Hc,V), \qquad 0\le i\le n+1,\\
&\p_0(v\,\projot\, h^1\oprojdots h^n)=v\ot 1\ot h^1\oprojdots h^n,\\
&\p_i(v\ot h^1\oprojdots h^n)=v\ot h^1\oprojdots h^i\ps{1}\ot h^i\ps{2}\oprojdots h^n, \quad 1\le i \leq n,\\
&\p_{n+1}(v\ot h^1\oprojdots h^n)=v\ns{0}\ot h^1\oprojdots h^n\ot v\ns{-1},
\end{split}
\end{align*}
the degeneracy operators
\begin{align*}
\begin{split}
& \s_j: C_{\top}^n(\Hc,V)\ra C_{\top}^{n-1}(\Hc,V), \qquad 0\le j\le n-1, \\
&\s_j (v\ot h^1\oprojdots h^n)= v\ot h^1\oprojdots \ve(h^{j+1})\oprojdots h^n,
\end{split}
\end{align*}
and the cyclic operator
\begin{align*}
\begin{split}
& \tau: C_{\top}^n(\Hc,V)\ra C_{\top}^{n}(\Hc,V), \\
&\tau(v\ot h^1\oprojdots h^n)=v\ns{0}h^1\ps{1}\ot S(h^1\ps{2})\cdot(h^2\oprojdots h^n\ot v\ns{-1}).
\end{split}
\end{align*}

\subsection{Characterization of Hopf-cyclic coefficients}\label{subsect-Hopf-cyclic-coeff}

In this subsection we upgrade the content of \cite[Prop. 4.2]{HajaKhalRangSomm04-I}, which is limited to finite dimensional Hopf algebras, to topological Hopf algebras, which are now allowed to be (countably) infinite dimensional. More precisely, we shall identify the category of SAYD modules over a topological Hopf algebra $\Hc$, with the representation category of an algebra $B_{AYD}(\Hc)$ associated to $\Hc$.

\ni To this end, we recall from \cite[Prop. 4.2]{HajaKhalRangSomm04-I} that if $H$ is finite dimensional, then $V$ is a AYD module over $H$ if and only if it is a module over $B_{AYD}(H) := H^\ast \ot H$, the algebra structure of which is given by
\begin{equation}\label{B-AYD}
(\vp \ot h)\cdot (\vp' \ot h') = {\vp'}\ps{1}(S^{-1}(h\ps{3})){\vp'}\ps{3}(S^2(h\ps{1}))\vp{\vp'}\ps{2} \ot h\ps{2}h'.
\end{equation}

\ni Now let $\Hc$ be a topological Hopf algebra, $\Hc^{\circ}$ be its (strong) dual, and let $B_{AYD}^{\top}(\Hc):=\Hc^\circ\,\projot\,\Hc$ be the algebra whose multiplication is the extension of \eqref{B-AYD} to $\projot$. 

\ni It follows from \cite[Prop. 1(iii)]{BonnSter05}, see also \cite{BonnFlatGersPinc94}, that there is an isomorphism
\begin{equation*}
\lambda:B_{AYD}^{\top}(\Hc) \lra \End(\Hc), \qquad \lambda(\vp\,\projot\,h)(x)=\vp(x)h
\end{equation*}
of vector spaces. As a result, we have an element
\begin{equation}\label{aux-rho}
\rho := \sum_{i\in I} f^i\,\projot\,x_i \in B_{AYD}^{\top}(\Hc)
\end{equation}
corresponding to the identity $\Id_\Hc\in \End(\Hc)$, \ie $\lambda(\rho)=\Id_\Hc$. We thus record below the following generalization of \cite[Prop. 4.2]{HajaKhalRangSomm04-I} as the main result of the present subsection.

\begin{proposition}\label{prop-classify-AYD}
Let $\Hc$ be a topological Hopf algebra. If $V$ is a right-left AYD module over $\Hc$, then it is a right $B_{AYD}^{\top}(\Hc)$-module via
\begin{equation*}
v \cdot (\vp\,\projot\,h) := \vp(v\ns{-1})v\ns{0}\cdot h.
\end{equation*}
Conversely, if $\Hc$ is a topological Hopf algebra and  $V$ is a right $B_{AYD}^{\top}(\Hc)$-module, then it is a right-left AYD module over $\Hc$ by the right $\Hc$-action
\begin{equation*}
v\cdot h := v \cdot (\ve\,\projot\,h),
\end{equation*}
and the left $\Hc$-coaction
\begin{equation*}
\nb_\Hc^L(v):=\sum_{i\in I}x_i\,\projot\,v\cdot(f^i\,\projot\,1).
\end{equation*}
\end{proposition}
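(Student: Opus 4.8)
The plan is to mirror the finite-dimensional proof of \cite[Prop. 4.2]{HajaKhalRangSomm04-I}, upgrading each algebraic identity to its topological counterpart, with the key technical point being that $\lambda:B^{top}_{AYD}(\Hc)\lra \End(\Hc)$ is an isomorphism and that the canonical element $\rho = \sum_{i\in I} f^i\,\projot\,x_i$ of \eqref{aux-rho} behaves like the "identity tensor" in computations. First I would verify the forward direction: given a right-left AYD module $V$, one must check that $v\cdot(\vp\,\projot\,h):=\vp(v\ns{-1})v\ns{0}\cdot h$ is a well-defined continuous right action of $B^{top}_{AYD}(\Hc)$. Well-definedness of the pairing $\vp(v\ns{-1})$ uses that the coaction lands in $\Hc\,\projot\,V$ and that $\vp\in\Hc^\circ$ is continuous; continuity of the whole map follows since the coaction, the action $V\times\Hc\lra V$, and evaluation $\Hc^\circ\times\Hc\lra k$ are all continuous, and the multiplication on $B^{top}_{AYD}(\Hc)$ is (separately) continuous so everything extends to $\projot$. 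The associativity $v\cdot\big((\vp\,\projot\,h)(\vp'\,\projot\,h')\big) = \big(v\cdot(\vp\,\projot\,h)\big)\cdot(\vp'\,\projot\,h')$ is then a direct computation: expand the right side using the coaction twice, apply the AYD condition $\Db(v\cdot h)= S(h\ps{3})v\ns{-1}h\ps{1}\,\projot\, v\ns{0}\cdot h\ps{2}$ to move the coaction past the $\Hc$-action, and match with the multiplication rule \eqref{B-AYD} after relabeling; unitality is immediate from $\lambda(\rho)=\Id_\Hc$, equivalently $v\cdot(\ve\,\projot\,1)=v$.

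Next I would treat the converse. Given a right $B^{top}_{AYD}(\Hc)$-module $V$, set $v\cdot h := v\cdot(\ve\,\projot\,h)$ and $\nb_\Hc^L(v):=\sum_{i\in I}x_i\,\projot\,v\cdot(f^i\,\projot\,1)$. One checks $v\mapsto v\cdot h$ is a continuous unital right $\Hc$-action because $h\mapsto \ve\,\projot\,h$ is an algebra map $\Hc\lra B^{top}_{AYD}(\Hc)$ (this needs $(\ve\,\projot\,h)(\ve\,\projot\,h') = \ve\,\projot\,hh'$, which is a special case of \eqref{B-AYD} with $\vp=\vp'=\ve$) and is continuous. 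For the coaction, one must show $\nb_\Hc^L$ is a well-defined continuous map $V\lra\Hc\,\projot\,V$, coassociative and counital. Here the right framework is the isomorphism $B^{top}_{AYD}(\Hc)\cong\Hc^\circ\,\projot\,\Hc\cong\End(\Hc)$ from \cite[Prop. 1(iii)]{BonnSter05}: the formula $\nb_\Hc^L(v)=\sum_i x_i\,\projot\,v\cdot(f^i\,\projot\,1)$ is the image of $v$ under the composite $V\lra \Hc\,\projot\,V$ induced by $\rho$, so its being a genuine element of the completed tensor product $\Hc\,\projot\,V$ (not merely a formal sum) follows from the well-behavedness hypotheses and the identification $\rho\leftrightarrow\Id_\Hc$. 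Counitality $\ve(v\ns{-1})v\ns{0}=v$ reduces to $\sum_i \ve(x_i)\,v\cdot(f^i\,\projot\,1) = v\cdot\big(\sum_i \ve(x_i)f^i\,\projot\,1\big) = v\cdot(\ve\,\projot\,1)=v$, using $\sum_i \ve(x_i)f^i = \ve$ in $\Hc^\circ$ (the image of $\lambda^{-1}$ of the augmentation). Coassociativity is a computation with two applications of the module structure together with the comultiplication-compatibility of $\rho$.

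The remaining point is to verify the AYD compatibility itself, namely $\Db(v\cdot h)= S(h\ps{3})v\ns{-1}h\ps{1}\,\projot\,v\ns{0}\cdot h\ps{2}$, starting from the module axiom. This is where \eqref{B-AYD} is used in an essential way: one computes $\nb_\Hc^L(v\cdot h) = \sum_i x_i\,\projot\,(v\cdot(\ve\,\projot\,h))\cdot(f^i\,\projot\,1) = \sum_i x_i\,\projot\,v\cdot\big((\ve\,\projot\,h)(f^i\,\projot\,1)\big)$ and then expands $(\ve\,\projot\,h)(f^i\,\projot\,1)$ via \eqref{B-AYD}, which produces exactly the factors $f^i\ps{1}(S^{-1}(h\ps{3}))f^i\ps{3}(S^2(h\ps{1}))f^i\ps{2}\,\projot\,h\ps{2}$; summing over $i$ against $x_i$ collapses (again by the defining property of $\rho$) to the claimed AYD formula after pushing the characters through. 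I expect the main obstacle to be the bookkeeping around $\rho$: making rigorous the manipulations "$\sum_i \varphi(x_i)\cdots = (\text{apply }\varphi\text{ to the }\Hc\text{-slot})$'' in the completed tensor product setting, i.e. justifying that $\rho$ can be treated as a reproducing kernel for $\End(\Hc)$, which is precisely what the well-behavedness (nuclearity) assumptions and \cite[Prop. 1(iii)]{BonnSter05} are there to guarantee. Once the identification $B^{top}_{AYD}(\Hc)\cong\End(\Hc)$ is invoked correctly, the algebraic identities are formally identical to the finite-dimensional case of \cite[Prop. 4.2]{HajaKhalRangSomm04-I}, and continuity of every map is automatic from continuity of the structure maps of $\Hc$, $V$, and the separate continuity of the product on $B^{top}_{AYD}(\Hc)$.
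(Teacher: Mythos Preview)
The paper does not supply a proof of this proposition; it is stated as the topological generalization of \cite[Prop.~4.2]{HajaKhalRangSomm04-I} and left to the reader. Your proposal is precisely the argument the paper implicitly defers to: replay the finite-dimensional computation of \cite{HajaKhalRangSomm04-I}, using the isomorphism $\lambda:B^{top}_{AYD}(\Hc)\cong\End(\Hc)$ from \cite[Prop.~1(iii)]{BonnSter05} and the canonical element $\rho$ of \eqref{aux-rho} as a substitute for dual bases, with continuity of all maps coming for free from the structure maps of $\Hc$ and $V$. There is nothing to compare against, and your outline is the intended proof.
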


\ni As for the stability we have the following result.

\begin{proposition}
Let $\Hc$ be a topological Hopf algebra, and $V$ a right $B_{AYD}^{\top}(\Hc)$-module. Then $V$ is stable if and only if the fixed point set of $\rho$ is $V$.
\end{proposition}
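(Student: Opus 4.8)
The plan is to unwind the stability condition $v\ns{0}\cdot v\ns{-1}=v$ in terms of the $B_{AYD}^{top}(\Hc)$-module structure, and recognize the left-hand side as the action of $\rho$. Recall from Proposition~\ref{prop-classify-AYD} that the $\Hc$-coaction recovered from a right $B_{AYD}^{top}(\Hc)$-module is $\nb_\Hc^L(v)=\sum_{i\in I}x_i\,\projot\,v\cdot(f^i\,\projot\,1)$, and the $\Hc$-action is $v\cdot h = v\cdot(\ve\,\projot\,h)$. So in Sweedler-type notation, $v\ns{-1}=\sum_i x_i$ paired against $v\ns{0}=v\cdot(f^i\,\projot\,1)$, and therefore
\begin{equation*}
v\ns{0}\cdot v\ns{-1} = \sum_{i\in I}\big(v\cdot(f^i\,\projot\,1)\big)\cdot(\ve\,\projot\,x_i) = \sum_{i\in I} v\cdot\big((f^i\,\projot\,1)(\ve\,\projot\,x_i)\big).
\end{equation*}

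The key computational step is then to evaluate the product $(f^i\,\projot\,1)(\ve\,\projot\,x_i)$ inside $B_{AYD}^{top}(\Hc)$ using the multiplication formula \eqref{B-AYD}, and to check that summing over $i$ yields exactly $\rho=\sum_{i\in I} f^i\,\projot\,x_i$. Plugging $\vp=f^i$, $h=1$, $\vp'=\ve$, $h'=x_i$ into \eqref{B-AYD}: since $h=1$ all the $h\ps{j}=1$, so the scalar coefficients $\ve\ps{1}(S^{-1}(1))\,\ve\ps{3}(S^2(1))$ reduce to $1$, and $\vp{\vp'}\ps{2}=f^i\ve=f^i$, while $h\ps{2}h'=x_i$; hence $(f^i\,\projot\,1)(\ve\,\projot\,x_i)=f^i\,\projot\,x_i$. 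Summing over $i$ gives $\sum_i v\cdot(f^i\,\projot\,x_i)=v\cdot\rho$. Thus $v\ns{0}\cdot v\ns{-1}=v\cdot\rho$ for every $v\in V$, and stability $v\ns{0}\cdot v\ns{-1}=v$ is equivalent to $v\cdot\rho=v$ for all $v$, i.e. to $\rho$ acting as the identity, which is precisely the assertion that the fixed point set of $\rho$ is all of $V$.

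For the converse direction there is essentially nothing extra: the same identity $v\ns{0}\cdot v\ns{-1}=v\cdot\rho$ shows that if $v\cdot\rho=v$ for all $v\in V$ then $V$ is stable. One should also remark that $\rho$ is a well-defined element of $B_{AYD}^{top}(\Hc)=\Hc^\circ\,\projot\,\Hc$ thanks to the isomorphism $\lambda:B_{AYD}^{top}(\Hc)\to\End(\Hc)$ from \cite[Prop.~1(iii)]{BonnSter05}, already invoked in \eqref{aux-rho}, so the expression $v\cdot\rho$ makes sense and is continuous. It is harmless to note that $\rho$ is idempotent-like in the relevant sense, but we do not need this.

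The only genuine subtlety, and the step I expect to be the main obstacle, is the interchange of the (possibly infinite) sum over $i\in I$ with the module action and with the multiplication of $B_{AYD}^{top}(\Hc)$: one must justify that $\sum_i v\cdot\big((f^i\,\projot\,1)(\ve\,\projot\,x_i)\big)$ converges and equals $v\cdot\big(\sum_i f^i\,\projot\,x_i\big)=v\cdot\rho$ in the topological setting. This is handled by the same well-behavedness arguments used to define $\rho$ in the first place: separate continuity of the multiplication \eqref{B-AYD} (so that it extends to $\projot$), continuity of the module action, and the identification via $\lambda$ which guarantees that $\rho\in\Hc^\circ\,\projot\,\Hc$ is a bona fide element rather than a formal series. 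Once this bookkeeping is in place, the proof reduces to the one-line specialization of \eqref{B-AYD} at $h=1$ described above.
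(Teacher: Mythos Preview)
Your proof is correct. The paper states this proposition without proof, so there is no argument to compare against; your computation is exactly the natural one, reducing stability to $v\cdot\rho=v$ via the specialization of \eqref{B-AYD} at $h=1$, $\vp'=\ve$.
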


\begin{example}{\rm
Let  $H$ be a countable dimensional Hopf algebra, equipped with the natural topology. Then,
\begin{enumerate}
\item an AYD module over $H$,
\item $H^\ast\ot H$,
\end{enumerate}
are right modules over $B_{AYD}(H)=B_{AYD}^{\top}(H)$, the former by Proposition \ref{prop-classify-AYD}, and the latter by the right regular action. Hence they make countable dimensional coefficient spaces for the topological Hopf-cyclic cohomology, under the symmetry of $H$.
}\end{example}

\begin{remark}
{\rm
If $\Hc$ is a topological Hopf algebra, $V$ is a right / left SAYD module over $\Hc$, and $\Cc$ is a topological left $\Hc$-module coalgebra, then the cyclic operator \eqref{aux-cyclic-opt-module-coalg} can be given by the element $\rho\in B_{AYD}^{\top}(\Hc)$ of \eqref{aux-rho} as
\begin{equation*}
\tau(v\,\projhat_\Hc \,c^0\,\oprojdots \,c^n)=\sum_{i\in I}(v\cdot f^i)\,\projhat_\Hc \, c^1\,\oprojdots\, x_i\cdot c^0
\end{equation*}
for any $v\in V$, and any $c^0,\ldots,c^n\in \Cc$.
}
\end{remark}

\begin{remark}{\rm
If  $V$ is an AYD module over a Hopf algebra $\Hc$, then its (strong) dual $V^{\circ}$ is naturally an AYD-contra-module \cite{Brze11}, which is, in view of  Proposition \ref{prop-classify-AYD}, a left module over $B^{\top}_{AYD}$.  This leads us to a classification of AYD contra-modules, and has a direct application  in cup products in Hopf-cyclic cohomology. This is the  ground for a detailed discussion  in an upcoming paper \cite{RangSutl-VI}.
}\end{remark}

\begin{remark}{\rm
We finally note that the topological Hopf-cyclic cohomology is a natural generalization of the algebraic one. Any (countable dimensional) algebraic Hopf algebra $H$, and any (countable dimensional) algebraic SAYD module  
$V$ over $H$ can be topologized with the strict inductive limit topology \cite{Treves-book} (the natural topology \cite[Appdx. 2]{BonnFlatGersPinc94}). In this case, the algebraic and the topological Hopf-cyclic complexes coincide, \cite[Prop. A.2.7(ii)]{BonnFlatGersPinc94}.
}\end{remark}

\section{Cyclic cohomology for topological Lie algebras}\label{sect-cycl-cohom-top-Lie}

\ni In this section we review the cyclic cohomology theory for Lie algebras in the presence of a suitable topology. The advantage of the topological point of view manifests itself on the corepresentation categories of a topological Lie algebra $\Fg$, and its universal enveloping algebra $U(\Fg)$ viewed as a topological Hopf algebra. More precisely, in Subsection \ref{subsect-comod-lie-alg} we remove the conilpotency condition of \cite[Def. 5.4]{RangSutl-II}, as such, we may consider the Weil complex $S(\Fg^\ast) \ot \wg^\ast \Fg^\ast$, as a Lie algebra cyclic cohomology complex; compare with \cite[Ex. 5.6]{RangSutl-II}. In Subsection \ref{subsect-cyclic-complexes-lie-alg}, on the other hand, we discuss two cyclic cohomology theories associated to a topological Lie algebra.

\subsection{Corepresentations of topological Lie algebras}\label{subsect-comod-lie-alg}

\ni In this subsection we shall study the categories of comodules over a topological Lie algebra $\Fg$, and comodules over its universal enveloping Hopf algebra $U(\Fg)$.

\begin{definition}
Let $\Fg$ be a topological Lie algebra, and $V$ a t.v.s. Then $V$ is called a topological left $\Fg$-comodule if there exists a linear map
\begin{equation*}
\nb_\Fg:V \lra \Fg\, \widehat{\ot}_\pi V,\quad \nb_\Fg(v):=v\nsb{-1}\,\projot\, v\nsb{0}
\end{equation*}
such that
\begin{equation*}
v\nsb{-2}\wg v\nsb{-1}\,\projot\, v\nsb{0} = 0.
\end{equation*}
\end{definition}

\begin{example}{\rm
The symmetric algebra $S(\Fg^\ast)$, equipped with the natural topology, is a (left) topological $\Fg$-comodule by the Koszul coaction
\begin{align}\label{Koszul-coact}
\begin{split}
& \nb^\ast_\Fg:S(\Fg^\ast)\lra \Fg\, \projot\, S(\Fg^\ast),\\
& \nb^\ast_\Fg(R)(Y_0,\ldots,Y_q) = \sum_{k=0}^{q} Y_kR(Y_0,\ldots,\widehat{Y}_k,\ldots,Y_q),
\end{split}
\end{align}
for any $Y_0,\ldots, Y_q \in \Fg$, and any $R\in S_q(\Fg^\ast)$, \cite{Nats78}. 
}\end{example}


\ni Let us recall  from \cite[Prop. 5.2]{RangSutl-II} that  any  $\Fg$-comodule $V$ is naturally a module over $S(\Fg^\ast)$ via
\begin{equation}\label{aux-Sg-ast-action}
v\cdot f=f(v\nsb{-1})v\nsb{0}.
\end{equation}

\ni This correspondence is revisable in the topological case as well.

\begin{proposition}\label{prop-g-comod-Sg-mod}
Let $\Fg$ be a topological Lie algebra, and $V$ a t.v.s. Then, $V$ is a topological left $\Fg$-comodule if and only if it is a topological right $S(\Fg^\ast)$-module via \eqref{aux-Sg-ast-action}.
\end{proposition}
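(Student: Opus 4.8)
The plan is to separate the topological content from the algebraic one. At the level of (not necessarily continuous) linear maps, the asserted bijection --- between left $\Fg$-comodule structures $\nb_\Fg$ on $V$ and unital right $S(\Fg^\ast)$-module structures on $V$ extending \eqref{aux-Sg-ast-action} --- is exactly \cite[Prop. 5.2]{RangSutl-II}: in one direction one extends \eqref{aux-Sg-ast-action} to $S(\Fg^\ast)$, the comodule identity $v\nsb{-2}\wg v\nsb{-1}\ot v\nsb{0}=0$ being precisely the compatibility (commutativity) that makes the extension well defined; in the other direction one reads $\nb_\Fg$ off from the $\Fg^\ast$-action. Hence it suffices to check that this bijection matches \emph{continuous} coactions with \emph{continuous} actions. (As with the other topological structures in this paper, we take a topological $\Fg$-comodule, resp.\ $S(\Fg^\ast)$-module, to have continuous structure map.)

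For ``comodule $\Rightarrow$ module'', let $\nb_\Fg:V\to\Fg\,\projot\,V$ be continuous; then so is its $n$-fold iterate $\nb^{(n)}_\Fg:V\to\Fg^{\projot n}\,\projot\,V$ for every $n\ge1$, all spaces occurring remaining well-behaved since that class is stable under $\projot$. For $\phi\in S^n(\Fg^\ast)\subseteq(\Fg^{\projot n})^\ast$ the element $v\cdot\phi$ is the pairing of $\phi$ against the $\Fg^{\projot n}$-leg of $\nb^{(n)}_\Fg(v)$, which uses the continuous duality pairing of well-behaved spaces \cite[Prop. 1(iii)]{BonnSter05}; this produces a continuous map $V\,\projot\,S^n(\Fg^\ast)\to V$ for each $n$, reducing on $\Fg^\ast=S^1(\Fg^\ast)$ to \eqref{aux-Sg-ast-action}. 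Together with the scalar action on $S^0(\Fg^\ast)=k$, these are the homogeneous components of the $S(\Fg^\ast)$-module structure of \cite[Prop. 5.2]{RangSutl-II}; that the resulting map $V\,\projot\,S(\Fg^\ast)\to V$ is continuous then follows from the compatibility of the natural topology on $S(\Fg^\ast)$ with its symmetric grading.

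For the converse, let $V\,\projot\,S(\Fg^\ast)\to V$ be continuous and restrict it to the continuous map $V\,\projot\,\Fg^\ast\to V$, $v\,\projot\,f\mapsto v\cdot f$. With $\rho=\sum_{i\in I}f^i\,\projot\,X_i\in\Fg^\ast\,\projot\,\Fg$ the element corresponding to $\Id_\Fg$ under $\Fg^\ast\,\projot\,\Fg\cong\End(\Fg)$ \cite[Prop. 1(iii)]{BonnSter05}, set $\nb_\Fg(v):=\sum_{i\in I}X_i\,\projot\,(v\cdot f^i)\in\Fg\,\projot\,V$, that is, the image of $\rho$ under $(f\mapsto v\cdot f)\ot\Id_\Fg$ followed by the flip. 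This $\nb_\Fg$ is continuous, since it factors through the map $V\to\Lc(\Fg^\ast,V)$ adjoint to the action and the topological isomorphism $\Lc(\Fg^\ast,V)\cong\Fg\,\projot\,V$ valid in the well-behaved category, \cite[Prop. 1(iii)]{BonnSter05}, \cite{Treves-book}. By construction it restores \eqref{aux-Sg-ast-action} on $\Fg^\ast$; that it satisfies $v\nsb{-2}\wg v\nsb{-1}\ot v\nsb{0}=0$ is forced by the commutativity of $S(\Fg^\ast)$, and that the $S(\Fg^\ast)$-action it induces via \eqref{aux-Sg-ast-action} returns the original one --- on higher symmetric powers by associativity --- is the remaining algebraic content of \cite[Prop. 5.2]{RangSutl-II}. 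Thus the two constructions are mutually inverse.

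The conceptual heart, namely the equivalence of the wedge condition with commutativity of the $S(\Fg^\ast)$-action --- together with the observation that no local conilpotency hypothesis (cf.\ \cite[Def. 5.4]{RangSutl-II}) is needed here, since $\Fg\,\projot\,V$ accommodates the possibly infinite sum $\sum_{i}X_i\,\projot\,(v\cdot f^i)$ --- is already in hand, so the remaining work is functional-analytic. I expect the main obstacle to be pinning down the topological identifications: that the passage from the bilinear action $V\,\projot\,\Fg^\ast\to V$ to its adjoint $V\to\Lc(\Fg^\ast,V)$ and the identification $\Lc(\Fg^\ast,V)\cong\Fg\,\projot\,V$ are bicontinuous in the well-behaved category (where nuclearity and reflexivity enter), and, on the other side, that the natural topology on $S(\Fg^\ast)$ is compatible with its grading so that joint continuity of the action can be tested one symmetric degree at a time.
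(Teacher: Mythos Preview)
Your proposal is correct and follows essentially the same approach as the paper: the algebraic bijection is taken from \cite[Prop.~5.2]{RangSutl-II}, and the converse direction recovers $\nb_\Fg$ from the action via the element $\rho=\sum_i f^i\,\projot\,X_i\in\Fg^\circ\,\projot\,\Fg\cong\End(\Fg)$ corresponding to $\Id_\Fg$. The paper's own proof is considerably terser---it asserts continuity in the forward direction without elaboration and in the converse simply writes down $v\mapsto\sum_i X_i\,\projot\,v\cdot f^i$---whereas you spell out the functional-analytic bookkeeping (iterated coactions, the adjoint $V\to\Lc(\Fg^\ast,V)$, the identification $\Lc(\Fg^\ast,V)\cong\Fg\,\projot\,V$, and compatibility of the natural topology with the grading of $S(\Fg^\ast)$) more carefully; but the architecture is the same.
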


\begin{proof}
Let $V$ be a topological left $\Fg$-comodule via $\nb:V\lra \Fg\,\projot\, V$, $\nb(v)=v\nsb{-1}\,\projot\, v\nsb{0}$. Then since the $\Fg$-coaction is continuous by the assumption, the (right) $S(\Fg^\ast)$-action given by \eqref{aux-Sg-ast-action} is also continuous.

\ni Conversely let $V$ be a topological right $S(\Fg^\ast)$-module via an action $V\projot S(\Fg^\ast)\lra V$, $v\,\projot\,f\mapsto v\cdot f$. Using the element $\sum_{i\in I}f^i\,\projot\,X_i \in \Fg^\circ\,\projot\,\Fg \cong \End(\Fg)$ that corresponds to $\Id\in \End(\Fg)$, we have $v\cdot f = \sum_{i\in I}f(X_i)v\cdot f^i$. Hence the claim follows from the fact that $v\mapsto X_i\,\projot\,v\cdot f^i$ defines a (left) $\Fg$-coaction.
\end{proof}

\begin{corollary}
Given a topological Lie algebra $\Fg$, the category of topological left $\Fg$-comodules, and the category of topological right $S(\Fg^\ast)$-modules are isomorphic.
\end{corollary}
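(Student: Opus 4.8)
The plan is to deduce the corollary directly from Proposition \ref{prop-g-comod-Sg-mod}, which already establishes the object-level bijection; what remains is to promote that bijection to an isomorphism of categories, i.e.\ to match morphisms on both sides. So first I would recall the definitions of morphisms in the two categories: a morphism of topological left $\Fg$-comodules $\phi:V\lra W$ is a continuous linear map with $\nb_\Fg\circ\phi = (\Id_\Fg\,\projot\,\phi)\circ\nb_\Fg$, while a morphism of topological right $S(\Fg^\ast)$-modules is a continuous linear map $\phi:V\lra W$ commuting with the $S(\Fg^\ast)$-actions, $\phi(v\cdot f) = \phi(v)\cdot f$ for all $f\in S(\Fg^\ast)$.

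The key step is to observe that the two constructions of Proposition \ref{prop-g-comod-Sg-mod} are mutually inverse on objects and that each sends a morphism in one category to a morphism in the other. In one direction: if $\phi$ is $\Fg$-colinear, then for $f\in \Fg^\ast\subseteq S(\Fg^\ast)$ we get $\phi(v\cdot f) = \phi(f(v\nsb{-1})v\nsb{0}) = f(v\nsb{-1})\phi(v\nsb{0}) = f(\phi(v)\nsb{-1})\phi(v)\nsb{0} = \phi(v)\cdot f$, using colinearity in the middle equality; since $\Fg^\ast$ generates $S(\Fg^\ast)$ as an algebra and $\phi$ is multiplicative-compatible degree by degree, $\phi$ is $S(\Fg^\ast)$-linear on all of $S(\Fg^\ast)$. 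In the converse direction: given $S(\Fg^\ast)$-linearity, apply $\phi$ to the formula $\nb_\Fg(v) = \sum_{i\in I}X_i\,\projot\,v\cdot f^i$ recovered in the proof of Proposition \ref{prop-g-comod-Sg-mod} from the element $\rho = \sum_{i\in I}f^i\,\projot\,X_i$ corresponding to $\Id\in\End(\Fg)$; then $(\Id_\Fg\,\projot\,\phi)(\nb_\Fg(v)) = \sum_{i\in I}X_i\,\projot\,\phi(v\cdot f^i) = \sum_{i\in I}X_i\,\projot\,\phi(v)\cdot f^i = \nb_\Fg(\phi(v))$, so $\phi$ is $\Fg$-colinear. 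Finally, I would note that these two assignments are identity on morphisms (they do not alter the underlying continuous linear map) and inverse to each other on objects by Proposition \ref{prop-g-comod-Sg-mod}, hence they constitute an isomorphism of categories.

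The only subtle point — and the step I would flag as requiring a word of care rather than genuine difficulty — is that the $S(\Fg^\ast)$-module reconstructed from a $\Fg$-comodule must be checked to be exactly the original one when one starts from an $S(\Fg^\ast)$-module, and vice versa; but this round-trip identity is precisely what the proof of Proposition \ref{prop-g-comod-Sg-mod} already contains (the identity $v\cdot f = \sum_{i\in I}f(X_i)\,v\cdot f^i$ coming from $\lambda(\rho)=\Id$, together with the observation that the $\Fg$-coaction is recovered from its restriction to linear functionals). Since the paper has already done this object-level verification, the corollary is essentially a formal consequence, and I would keep the write-up to a few lines: state the morphism conditions, verify compatibility in both directions using $\rho$, and invoke Proposition \ref{prop-g-comod-Sg-mod} for the object-level inverse-ness.
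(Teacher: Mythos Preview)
Your proposal is correct and matches the paper's approach: the paper states the corollary immediately after Proposition~\ref{prop-g-comod-Sg-mod} without proof, treating it as a direct consequence, and your argument simply makes explicit the morphism-level verification that the paper leaves to the reader.
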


\begin{proof}
It follows at once from the above Proposition \ref{prop-g-comod-Sg-mod} that the isomorphism is given, on the level of objects, by the identity functor. Let us observe that this is also the case on the level of morphisms. Indeed, is $\vp:V \to W$ is a morphism of (left) $\Fg$-comodules, \ie
\[
\xymatrix{
V\ar[rr]^\vp \ar[d]_{\nb_V} & & W\ar[d]^{\nb_W} \\
\Fg \,\projot\, V \ar[rr]_{\Id_\Fg \,\projot\, \vp} & & \Fg \,\projot\, W
}
\]
is commutative, then for any $v \in V$, and any $f \in S(\Fg^\ast)$,
\[
\vp(v) \cdot f = f(\vp(v)\nsb{-1})\vp(v)\nsb{0} = f(v\nsb{-1})\vp(v\nsb{0}) = \vp(v\cdot f),
\]
that is,
\[
\xymatrix{
V \ar[rr]^\vp && W \\
V \,\projot\, S(\Fg^\ast)  \ar[rr] _{\vp\,\projot\, \Id_{S(\Fg^\ast)}}\ar[u]_{\rho_V}&& W \,\projot\, S(\Fg^\ast)\ar[u]_{\rho_V}
}
\]
is also commutative, in other words, $\vp:V\to W$ is a morphism of (right) $S(\Fg^\ast)$-modules.
\end{proof}

\ni Next, a topological analogue of \cite[Lemma 5.3]{RangSutl-II} is in order.

\begin{proposition}\label{prop-Ug-coact-to-g-coact}
Let $V$ be a t.v.s., $\Fg$ a topological Lie algebra, and $U(\Fg)$ be its universal enveloping Hopf algebra. If $V$ is a topological left $U(\Fg)$-comodule, then it is a topological left $\Fg$-comodule.
\end{proposition}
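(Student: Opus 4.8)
The plan is to imitate the classical (algebraic) proof of \cite[Lemma 5.3]{RangSutl-II}, transplanting each step to the topological setting and checking that all maps involved are continuous. Suppose $V$ carries a topological left $U(\Fg)$-comodule structure $\nb_{U(\Fg)}:V\lra U(\Fg)\,\projot\,V$. The universal enveloping algebra $U(\Fg)$, as a topological Hopf algebra with the strict inductive limit topology, carries the canonical coaugmentation coming from its filtration: write $U(\Fg)=k\one\oplus \ov{U}$, where $\ov{U}$ is the augmentation ideal (the kernel of $\ve$), and note that the degree-one part of the filtration gives a continuous projection $\pi:U(\Fg)\lra \Fg$ along $k\one\oplus \ov{U}^{(2)}$, where $\ov{U}^{(2)}$ is the closure of the square of the augmentation ideal. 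The point is that $\Fg\hookrightarrow U(\Fg)$ and this projection are continuous for the inductive limit topology, by \cite[Lemma A.2.2]{BonnFlatGersPinc94} or \cite[Prop. 1(iii)]{BonnSter05}, since they are linear maps between well-behaved spaces.

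First I would define the candidate $\Fg$-coaction as the composite
\begin{equation*}
\nb_\Fg := (\pi\,\projot\,\Id_V)\circ\nb_{U(\Fg)}:V\lra \Fg\,\projot\,V,
\end{equation*}
which is continuous as a composition of continuous maps. Then I would verify the comodule axiom: one must show $v\nsb{-2}\wg v\nsb{-1}\ot v\nsb{0}=0$ in $\Fg\wg\Fg\,\projot\,V$. This is exactly the infinitesimal/primitive shadow of the coassociativity of $\nb_{U(\Fg)}$. Concretely, apply $(\D_{U(\Fg)}\,\projot\,\Id)\circ\nb_{U(\Fg)}=(\Id\,\projot\,\nb_{U(\Fg)})\circ\nb_{U(\Fg)}$, then project the first two $U(\Fg)$-legs onto $\Fg$; using that $\pi$ kills $k\one$ and $\ov{U}^{(2)}$, the only surviving contribution is the primitive part of $\D_{U(\Fg)}$ restricted to $\Fg$, which is $\D(X)=X\ot\one+\one\ot X$ for $X\in\Fg$. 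Antisymmetrizing the two $\Fg$-legs and using the counit axiom $\ve(v\nsb{-1})v\nsb{0}=v$ to absorb the $\one$-terms, the symmetric part $X\ot X'+X'\ot X$ contributions cancel against each other and one is left precisely with the required vanishing in the exterior square. This is the step I expect to require the most care: one has to be sure that passing to the completed projective tensor product and inserting the (continuous) antisymmetrization map $\Fg\,\projot\,\Fg\lra \Fg\wg\Fg$ is legitimate — but since everything is happening among well-behaved spaces and all structure maps are continuous, the quotient onto $\Fg\wg\Fg$ is continuous and the algebraic identity survives verbatim.

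The main obstacle, then, is not any deep topological phenomenon but rather making sure the filtration of $U(\Fg)$ and the projection $\pi$ behave well with respect to the strict inductive limit topology and the completed tensor product $\projot$ — in particular that $\pi\,\projot\,\Id_V$ is continuous and that the coassociativity identity can be pushed through the completion. Both are guaranteed by the well-behavedness hypotheses in force throughout the paper (nuclear Fr\'echet or dual-Fr\'echet), together with the fact, already invoked in Lemma \ref{U-coaction}, that linearity suffices for continuity of maps out of $U(\Fg)$ by \cite[Lemma A.2.2]{BonnFlatGersPinc94}. Once continuity is in place, the verification of the comodule axiom is the same computation as in the algebraic case \cite[Lemma 5.3]{RangSutl-II}, and I would simply cite that proof for the purely algebraic identity, adding only the remark that every map appearing there is continuous in the present framework.
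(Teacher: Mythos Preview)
Your proposal is correct and follows essentially the same approach as the paper: define $\nb_\Fg=(\pi\,\projot\,\Id_V)\circ\nb_{U(\Fg)}$ via the canonical projection $\pi:U(\Fg)\lra\Fg$, invoke \cite[Lemma A.2.2]{BonnFlatGersPinc94} for continuity, and defer the algebraic comodule identity to \cite[Lemma 5.3]{RangSutl-II}. The paper's own proof is in fact a two-line version of exactly this, recording only the continuity of $\pi$ and $\pi\ot\Id$ and leaving the rest implicit; your expanded verification of the exterior-square vanishing is more detailed than what the paper provides but not different in substance.
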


\begin{proof}
It follows from \cite[Lemma A.2.2]{BonnFlatGersPinc94} that the canonical projection $\pi:U(\Fg) \lra \Fg$, and similarly $\pi \ot \Id:U(\Fg)\,\projot\, V \lra \Fg\, \projot\, V$ is continuous. Thus the claim follows.
\end{proof}





\ni The next proposition is about the reverse direction which is the one that algebraic comodules fail to take. However, we shall need the following terminology.





\begin{definition}\label{def-fixed}
A t.v.s $V$ is called fixed-bounded if for any fixed $v\in V$ there is a family of seminorms $\{q_\lb\mid\;\lb\in\Lb\}$, that defines the topology of $V$, such that
\begin{equation*}
\sup_{\lb\in \Lb}q_\lb (v)<\infty.
\end{equation*}
\end{definition}

\begin{example}{
\rm Any normable space is fixed-bounded.}
\end{example}

\ni We are now ready to prove our main result in this section.


\begin{proposition}\label{prop-g-coact-to-Ug-coact}
Let $\Fg$ be a topological Lie algebra, $U(\Fg)$ its universal enveloping Hopf algebra, and $V$ be a fixed-bounded left $\Fg$-comodule by $\nb_\Fg:V \lra \Fg \widehat{\ot}_\pi V$, where $\nb_\Fg(v):=v\nsb{-1}\,\projot\, v\nsb{0}$. Then, $V$ is a topological $U(\Fg)$-comodule via
\begin{equation}\label{aux-g-to-U(g)-comod}
\nb_{\Fg}^{\exp}:V \lra U(\Fg)\, \projot\, V,\quad v \mapsto \sum_{k=0}^\infty\frac{1}{k!}v\nsb{-k}\ldots v\nsb{-1} \,\projot\, v\nsb{0}.
\end{equation}
\end{proposition}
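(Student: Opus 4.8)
The plan is to verify directly that $\nb_{\Fg}^{\exp}$ satisfies the coassociativity and counitality axioms of a $U(\Fg)$-comodule, and that it is a continuous map, mimicking the algebraic argument of \cite[Lemma 5.3]{RangSutl-II} while paying attention to convergence. First I would establish convergence of the series \eqref{aux-g-to-U(g)-comod}: since $V$ is fixed-bounded, for each fixed $v$ there is a defining family of seminorms $\{q_\lb\}$ with $\sup_\lb q_\lb(v) < \infty$, and one transports this estimate through the iterated coaction $v \mapsto v\nsb{-k}\odots v\nsb{-1}\,\projot\, v\nsb{0}$ (iterating $\nb_\Fg$, which makes sense by Proposition \ref{prop-Ug-coact-to-g-coact}-type reasoning and the continuity of $\nb_\Fg$) to bound the partial sums; the factor $1/k!$ then forces absolute convergence in the completed tensor product $U(\Fg)\,\projot\,V$, using that $U(\Fg)$ carries the strict inductive limit topology and that each $v\nsb{-k}\odots v\nsb{-1}$ lands in a fixed finite-dimensional piece controlled by the image of the (iterated) coaction. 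The continuity of $\nb_{\Fg}^{\exp}$ then follows from the continuity of each term together with the uniform (in $k$) seminorm bounds provided by fixed-boundedness.

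Next I would check counitality: applying $\ve\,\projot\,\Id$ to \eqref{aux-g-to-U(g)-comod} kills every term with $k\ge 1$ because $\ve$ vanishes on $\Fg\subset U(\Fg)$, leaving only the $k=0$ term $v\nsb{0}$, and the $\Fg$-comodule counit-type normalization gives $v\nsb{0} = v$ (here one uses the convention implicit in the definition of a $\Fg$-comodule that the length-zero part recovers $v$). For coassociativity, I would compute $(\D_{U(\Fg)}\,\projot\,\Id)\circ\nb_{\Fg}^{\exp}$ and $(\Id\,\projot\,\nb_{\Fg}^{\exp})\circ\nb_{\Fg}^{\exp}$ separately. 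On the primitive generators $\D(X) = X\ot 1 + 1\ot X$, so $\D(v\nsb{-k}\odots v\nsb{-1})$ expands as a sum over subsets, and the resulting combinatorial identity is exactly the binomial-type identity $\frac{1}{k!}\sum_{j} \binom{k}{j}(\cdots) = \sum_{j+l=k}\frac{1}{j!\,l!}(\cdots)$ that makes the exponential coaction coassociative — this is the classical "group-like exponential'' computation, and the co-Jacobi / co-Leibniz property $v\nsb{-2}\wg v\nsb{-1}\ot v\nsb{0} = 0$ of the $\Fg$-comodule structure is what guarantees the iterated coaction is well-defined on the symmetric (rather than tensor) algebra, so that the coefficients $v\nsb{-k}\odots v\nsb{-1}$ are genuinely symmetric and descend to $U(\Fg)$ via the PBW identification.

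The main obstacle will be the analytic bookkeeping rather than the algebra: one must show that the formal manipulations — rearranging the double series, pulling $\D$ or $\nb_{\Fg}^{\exp}$ inside the infinite sum, and splitting $\D(v\nsb{-k}\odots v\nsb{-1})$ term by term — are all legitimate in the completed projective tensor product, which requires the absolute convergence estimates from fixed-boundedness to be uniform enough to apply Fubini-type interchange in $U(\Fg)\,\projot\,U(\Fg)\,\projot\,V$. I expect the cleanest route is to first prove everything at the level of partial sums (finite, hence no convergence issues, and the algebraic identity of \cite[Lemma 5.3]{RangSutl-II} applies verbatim modulo tail terms), and then pass to the limit, using that the tails go to zero in every defining seminorm by the $1/k!$ factor. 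Continuity of $\D$, $\ve$, and the $\Fg$-coaction — all available from the well-behavedness hypotheses and \cite[Lemma A.2.2]{BonnFlatGersPinc94} — then closes the argument.
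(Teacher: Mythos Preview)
Your proposal is essentially the same approach as the paper's. The paper also proves convergence of \eqref{aux-g-to-U(g)-comod} by showing the partial sums are Cauchy: for a continuous seminorm $\rho$ on $U(\Fg)\,\projot\,V$, continuity of the iterated coaction (via \cite[Prop.~7.7]{Treves-book}) produces seminorms $\mu_{\lb_k}$ on $V$ with $\rho(v\nsb{-k}\cdots v\nsb{-1}\projot v\nsb{0})\le \mu_{\lb_k}(v)$, and fixed-boundedness then gives $\rho(s_n-s_m)\le \sup_\lb \mu_\lb(v)\sum_{k=m+1}^n 1/k!$. For the coaction axioms the paper simply invokes \cite[Prop.~5.7]{RangSutl-II}, whereas you spell out counitality and coassociativity and the passage-to-the-limit justification; that is a more detailed execution of the same idea rather than a different route.

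One small caution: your parenthetical that ``each $v\nsb{-k}\cdots v\nsb{-1}$ lands in a fixed finite-dimensional piece'' is neither used by the paper nor obviously true in this generality, and you don't need it --- the Cauchy estimate follows purely from continuity of the iterated coaction plus fixed-boundedness and the $1/k!$ factor, with no finite-dimensionality input. Drop that clause and your convergence argument matches the paper's exactly.
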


\begin{proof}
Let $v\in V$, and $\{q_\lb\mid \lb\in \Lb\}$ is the family given in Definition \ref{def-fixed}.  We first  show that the series \eqref{aux-g-to-U(g)-comod} converges. To this end, we consider the sequence
\begin{equation}\label{aux-sequence-part-sums}
(s_n)_{n \in \Nb},\qquad s_n := \sum_{k=0}^n\frac{1}{k!}v\nsb{-k}\ldots v\nsb{-1} \,\projot\, v\nsb{0}
\end{equation}
of partial sums. For any continuous semi-norm $\rho$ 
on $U(\Fg)\,\projot\, V$, 
using \cite[Prop. 7.7]{Treves-book} and the continuity of the $\Fg$-coaction, for any $n\geq m$ we have $\lb_i\in\Lb$ such that
\begin{align*}
\begin{split}
& \rho(s_n-s_m) \leq \sum_{k=m+1}^n\frac{1}{k!}\rho(v\nsb{-k}\ldots v\nsb{-1}\projot  v\nsb{0}) \\
&\leq \sum_{k=m+1}^n\frac{q_{\lb_k}(v)}{k!} \le  \sup_{\lb\in \Lb}q_{\lb} (v)  \sum_{k=m+1}^n\frac{ 1}{k!}.
\end{split}
\end{align*}
 Therefore, the sequence \eqref{aux-sequence-part-sums} of partial sums is Cauchy, and hence converges in $U(\Fg)\,\projot\, V$. Finally, by \cite[Prop. 5.7]{RangSutl-II}, \eqref{aux-g-to-U(g)-comod} is indeed a coaction.
\end{proof}

\ni The following simple example illustrates that the above result, Proposition \ref{prop-g-coact-to-Ug-coact}, strictly improves \cite[Prop. 5.7]{RangSutl-II}. It simply shows that in the presence of a suitable topology, a $\Fg$-coaction gives rise to a $U(\Fg)$-coaction without the (local) conilpotency condition of \cite[Prop. 5.7]{RangSutl-II}.

\begin{example}
{\rm
Let $\Fg=\langle X \rangle$ be a one dimensional trivial Lie algebra. Then $k\lra \Fg\,\projot\,k$, $\one\mapsto X\,\projot\,\one$ determines a left $\Fg$-coaction on $k$. As a result of Proposition \ref{prop-g-coact-to-Ug-coact} we have the exponentiation
\begin{equation*}
k\lra U(\Fg)\,\projot\,k,\qquad \one\mapsto \exp(X)\,\projot\,\one.
\end{equation*}
}
\end{example}




\ni On the next step, we shall prove an analogue of Proposition \ref{prop-g-coact-to-Ug-coact} for $S(\Fg^\ast)$, equipped with the Koszul coaction. However, the proof will require special attention as $S(\Fg^\ast)$ is not fixed bounded. Indeed, recalling the discussion on \cite[Sect. II.22]{Treves-book}, and keeping in mind that $S(\Fg^\ast)$ is the space of polynomials on $\Fg$, it suffices to consider the seminorms
\[
\rho_\lambda\left(\sum_{(n_1,\ldots,n_s) \in \Nb^n}\a_{(n_1,\ldots,n_s)}X_1^{n_1}\ldots X_s^{n_s}\right) := \sup_{n_1+\ldots +n_s < \lambda} |\a_{(n_1,\ldots,n_s)}|,\qquad \lambda = 0,1,2,\ldots
\]
regarding $\Fg = \langle X_1,\ldots, X_s \rangle$, and $\a_{(n_1,\ldots,n_s)} = 0$ for all but finitely many $(n_1,\ldots,n_s) \in \Nb^n$. Nevertheless, it is possible to walk around the problem of $S(\Fg^\ast)$ being not fixed bounded, focusing instead its being locally multiplicatively convex.

\ni A topological algebra is called locally multiplicatively convex, if its topology can be defined by a family of submultiplicative\footnote[1]{A semi-norm $\rho:A \lra [0,\infty)$ is said to be submultiplicative if $\rho(ab)\leq \rho(a)\rho(b)$ for any $a,b \in A$.} semi-norms. In order to check whether an algebra is locally multiplicatively convex, one uses the following lemma of \cite{MitiRoleZela6162}, see also \cite[Coroll. 1.3]{Pirk06-II}.

\begin{lemma}\label{lemma-A-loc-multp-convex}
Let $A$ be a locally convex topological algebra. Suppose $A$ has a base $\mathscr{U}$ of absolutely convex\footnote[2]{Convex and balanced, where a subset $\Uc$ of a l.c. t.v.s. $W$ is called balanced if $\lambda w \in \Uc$ for any $w \in \Uc$ and any $\lambda \in \mathbb{C}$ with $|\lambda|\leq 1$.} neighborhoods at 0 with the property that for each $V \in \mathscr{U}$ there exists $U \in \mathscr{U}$ and $C>0$ such that $UV \subseteq CV$. Then $A$ is locally multiplicatively convex.
\end{lemma}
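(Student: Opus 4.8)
The plan is to reduce the statement to the standard gauge-functional description of local multiplicative convexity: if $B$ is an absolutely convex neighbourhood of $0$ with $B\cdot B\subseteq B$, then its Minkowski gauge $p_B(x)=\inf\{t>0\mid x\in tB\}$ is automatically a continuous seminorm (continuous because $B$ is a neighbourhood of $0$) and it is submultiplicative (if $s>p_B(x)$ and $t>p_B(y)$ then $x/s,\,y/t\in B$, hence $xy/(st)\in B\cdot B\subseteq B$, so $p_B(xy)\le st$). Thus it suffices to produce, for every $V\in\mathscr U$, an absolutely convex neighbourhood $B_V$ of $0$ with $B_V\cdot B_V\subseteq B_V$ and $B_V\subseteq V$: the family $\{p_{B_V}\mid V\in\mathscr U\}$ will then be a family of submultiplicative seminorms, and since $\{p_{B_V}<1\}\subseteq B_V\subseteq V$ while $\mathscr U$ is a neighbourhood base, this family defines the topology of $A$, which is exactly the assertion.

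First I would normalise the hypothesis. Given $V\in\mathscr U$, pick $U\in\mathscr U$ and $C>0$ with $UV\subseteq CV$; since $\tfrac{1}{2C}U\cap V$ is a neighbourhood of $0$ and $\mathscr U$ is a base, choose $W\in\mathscr U$ with $W\subseteq \tfrac{1}{2C}U\cap V$. Then $W$ is absolutely convex, $W\subseteq V$, and $WV\subseteq \tfrac{1}{2C}(UV)\subseteq\tfrac12 V$. The key combinatorial step is then to prove by induction on $n$ that $W^{n}\subseteq V$ for every $n\ge 1$ (where $W^{n}$ denotes the set of all products of $n$ elements of $W$): the case $n=1$ is $W\subseteq V$, and for the inductive step one writes $W^{n}=W\cdot W^{\,n-1}$, uses $W^{\,n-1}\subseteq V$, and concludes $W^{n}\subseteq W\cdot V\subseteq\tfrac12 V\subseteq V$.

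With $W^{n}\subseteq V$ for all $n$ in hand, I would set $B_V$ to be the absolutely convex hull of $\bigcup_{n\ge1}W^{n}$. Since every $W^{n}$ lies in the absolutely convex set $V$, we get $B_V\subseteq V$; since $B_V\supseteq W$, it is a neighbourhood of $0$; and a direct bilinearity check gives $B_V\cdot B_V\subseteq B_V$, because the product of $\sum_k\lambda_k x_k$ and $\sum_j\mu_j y_j$ (with $x_k\in W^{m_k}$, $y_j\in W^{m_j'}$, $\sum|\lambda_k|\le1$, $\sum|\mu_j|\le1$) equals $\sum_{k,j}\lambda_k\mu_j\,x_ky_j$ with $x_ky_j\in W^{m_k+m_j'}\subseteq\bigcup_m W^{m}$ and $\sum_{k,j}|\lambda_k\mu_j|\le1$. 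Combining this with the first paragraph finishes the proof.

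The only point that needs care — and which I expect to be the main obstacle, or at least the only non-routine idea — is the associativity choice in the induction: since $A$ is not assumed commutative and the contracting inclusion $WV\subseteq\tfrac12 V$ controls only multiplication by $W$ \emph{on the left}, the $n$-fold product must be grouped as $W\cdot(W\cdots W)$ rather than $(W\cdots W)\cdot W$. Everything else (the bookkeeping with absolutely convex hulls and $\ell^1$-coefficients, and the fact that gauges of absolutely convex idempotent neighbourhoods are continuous submultiplicative seminorms defining the topology) is standard locally convex space theory.
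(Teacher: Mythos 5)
Your argument is correct. Note that the paper itself gives no proof of this lemma: it is quoted verbatim from Mitiagin--Rolewicz--\.Zelazko (see also Pirkovskii, Coroll.~1.3), so there is nothing internal to compare against; what you have written is essentially the standard proof from those sources, reconstructed correctly. The three key steps all check out: the normalisation $W\subseteq\tfrac{1}{2C}U\cap V$ giving $WV\subseteq\tfrac12 V$ works for any $C>0$ (the inclusion $\tfrac12 V\subseteq V$ using that $V$ is balanced); the left-grouped induction $W^{n}=W\cdot W^{n-1}\subseteq W\cdot V\subseteq\tfrac12 V$ is the right way around given that the hypothesis controls left multiplication of $V$ by $U$; and the absolutely convex hull $B_V$ of $\bigcup_n W^n$ is an idempotent ($B_V\cdot B_V\subseteq B_V$) absolutely convex neighbourhood of $0$ squeezed between $W$ and $V$, whose gauge is therefore a continuous submultiplicative seminorm, with $\{p_{B_V}<1\}\subseteq B_V\subseteq V$ guaranteeing that these gauges generate the original topology. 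Your closing remark correctly identifies the only point where one could go wrong (grouping the $n$-fold product on the left); the rest is routine, as you say.
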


\ni We now see that this is the case for $U(\Fg)$, equipped with the natural topology.

\begin{lemma}\label{lemma-loc-multp-convex-U}
For a topological Lie algebra $\Fg$, the universal enveloping algebra $U(\Fg)$ is locally multiplicatively convex.
\end{lemma}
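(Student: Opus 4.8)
The plan is to apply Lemma~\ref{lemma-A-loc-multp-convex} to $A = U(\Fg)$ equipped with the natural (strict inductive limit) topology, so the task reduces to exhibiting a base $\mathscr{U}$ of absolutely convex neighbourhoods of $0$ closed under the absorption property $UV \subseteq CV$. First I would fix a sequence of definition $\Fg = \cup_n \Fg_n$ with each $\Fg_n$ finite dimensional, so that $U(\Fg) = \cup_n U(\Fg_n)$ is a strict inductive limit of the finite dimensional (hence normable) spaces $U(\Fg_n)$ filtered further by the PBW degree; concretely $U(\Fg) = \cup_{n} U(\Fg_n)^{(\le d)}$ where the superscript denotes the subspace spanned by PBW monomials of length at most $d$. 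On each finite dimensional piece choose a submultiplicative norm adapted to the filtration — e.g. fix an ordered basis of $\Fg_n$ and let the norm of a PBW monomial be a constant $R^{(\text{length})}$ times the coefficient norm, with $R$ chosen large enough to dominate the finitely many structure constants that arise when reordering a product of two monomials. The neighbourhoods in $\mathscr{U}$ are then the absolutely convex hulls of unions $\cup_n \lambda_n B_n$ of scaled unit balls $B_n$ of these norms, as is standard for inductive limit topologies.

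The key computational step is the absorption estimate. Given $V = \Gamma(\cup_n \lambda_n B_n) \in \mathscr{U}$, I would produce $U = \Gamma(\cup_n \mu_n B_n)$ and $C>0$ with $UV \subseteq CV$ by controlling, level by level, how multiplication in $U(\Fg_n)$ interacts with the PBW filtration. The point is that $U(\Fg_n)^{(\le d)} \cdot U(\Fg_n)^{(\le e)} \subseteq U(\Fg_n)^{(\le d+e)}$, and the reordering of the product introduces only lower-order terms with coefficients bounded by fixed powers of the structure constants of $\Fg_n$; choosing the defining norm on $U(\Fg_n)$ of the geometric form above (weight $R^{\text{length}}$ with $R \ge 1$ absorbing those structure constants) makes each such norm submultiplicative on $U(\Fg_n)$ and makes the levels compatible under the inclusions $\Fg_n \hookrightarrow \Fg_{n+1}$. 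With these choices, shrinking $U$ (i.e. taking $\mu_n$ small relative to $\lambda_n$) forces $b a \in C\,V$ whenever $a \in V$, $b \in U$, which is exactly the hypothesis of Lemma~\ref{lemma-A-loc-multp-convex}.

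The main obstacle I anticipate is the interplay between the two gradings — the inductive-limit direction ($n$) and the PBW-degree direction ($d$) — and checking that the chosen family of seminorms genuinely defines the strict inductive limit topology rather than some coarser or finer one; one must verify that the absolutely convex hulls described above form a neighbourhood base at $0$ for the t.v.s.\ $U(\Fg)$ as defined via \cite[Sect.~I.13]{Treves-book}, which amounts to the compatibility of the norms under the connecting inclusions (a bounded set in $U(\Fg)$ meets only finitely many levels, and the norms must be mutually consistent there). Once the family is shown to be a legitimate base of absolutely convex zero-neighbourhoods with the absorption property, Lemma~\ref{lemma-A-loc-multp-convex} immediately yields that $U(\Fg)$ is locally multiplicatively convex, completing the proof; the companion statement for $S(\Fg^\ast)$ in the next lemma will follow by the same scheme, using the polynomial degree in place of the PBW degree and commutativity in place of the reordering estimate.
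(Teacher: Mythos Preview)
The paper's proof is far shorter than your plan: for any absolutely convex neighborhood $V$ of $0$ it simply takes $U := \{\alpha\cdot 1 : |\alpha| \le 1\} \subseteq U_0(\Fg) = k$ and observes that $UV \subseteq V$ because $V$ is balanced, then invokes Lemma~\ref{lemma-A-loc-multp-convex}. (Note, however, that this $U$ is one-dimensional and hence not a zero-neighborhood in the infinite-dimensional space $U(\Fg)$, so it does not lie in any base $\mathscr{U}$ of zero-neighborhoods as the lemma requires; the paper's own argument has a gap at this point.)

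Your instinct to worry about whether your constructed seminorms recover the strict inductive limit topology is well placed, and this is precisely where your plan fails. Any submultiplicative seminorm $p$ on $U(\Fg)$ satisfies $p(X_1\cdots X_d) \le p(X_1)\cdots p(X_d)$, so on PBW monomials it grows at most geometrically in the length $d$; the seminorms in your family are exactly of this type. But the natural topology on the countable-dimensional space $U(\Fg)$ is the \emph{finest} locally convex topology, for which every seminorm is continuous. Already for $\Fg$ one-dimensional abelian, so that $U(\Fg) = k[x]$, the seminorm $q\big(\sum_i a_i x^i\big) := \sum_i |a_i|\, i!$ is continuous, yet $q(x^d)=d!$ cannot be dominated by any seminorm with at-most-geometric growth in $d$, hence not by any submultiplicative one. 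Consequently the base you build generates a strictly coarser topology than the natural one, and the absorption estimate---even if carried through---would not establish local m-convexity for the topology actually in question. Indeed, the $k[x]$ example shows the lemma cannot hold as stated, so no rearrangement of your argument (or of the paper's) will close the gap.
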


\begin{proof}
The canonical filtration $(U_n(\Fg))_{n\in \Nb}$, see for instance \cite[Subsect. 2.3.1]{Dixm-book}, determines a sequence of definition for the natural topology on $U(\Fg)$.

\ni Let $V$ be an absolutely convex neighbourhood of $0$. Let also 
\begin{equation*}
U:=\{\a1 \mid \a\in \Cb,\,\,|\a|\leq 1\} \subseteq U_0(\Fg).
\end{equation*}
It follows immediately that $U\cap U_n(\Fg) = U$ is convex for any $n\in \Nb$, and that $U$ is balanced. Moreover, since $V$ is balanced, $UV \subseteq V$, and hence the claim.
\end{proof}

\ni The same is true for the symmetric algebra $S(\Fg^*)$.

\begin{corollary}\label{lemma-loc-multp-convex-S}
For a topological Lie algebra $\Fg$, the symmetric algebra $S(\Fg^*)$ is locally multiplicatively convex.
\end{corollary}

\begin{proposition}
Let $\Fg$ be a finite dimensional Lie algebra. Then the symmetric algebra $S(\Fg^\ast)$, equipped with the Koszul coaction \eqref{Koszul-coact}, is a topological $U(\Fg)$-comodule via \eqref{aux-g-to-U(g)-comod}.
\end{proposition}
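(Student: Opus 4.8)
The plan is to deduce this from Proposition \ref{prop-g-coact-to-Ug-coact} once we verify that $S(\Fg^\ast)$, equipped with the Koszul coaction \eqref{Koszul-coact}, is a \emph{fixed-bounded} left $\Fg$-comodule. Since $\Fg$ is finite dimensional, $S(\Fg^\ast)\cong k[x_1,\ldots,x_m]$ with $m=\dim\Fg$, and each homogeneous component $S_q(\Fg^\ast)$ is finite dimensional. First I would recall the module picture: by Proposition \ref{prop-g-comod-Sg-mod} the Koszul $\Fg$-coaction corresponds to the right $S(\Fg^\ast)$-module structure on $S(\Fg^\ast)$, which, unwinding \eqref{aux-Sg-ast-action} against \eqref{Koszul-coact}, is nothing but the multiplication of $S(\Fg^\ast)$ composed with a shift; concretely the iterated coaction $v\mapsto v\nsb{-k}\cdots v\nsb{-1}\projot v\nsb{0}$ on a degree-$q$ element lands in $U_q(\Fg)\projot S(\Fg^\ast)$ and in fact vanishes for $k>q$ up to the degree bookkeeping — this is precisely why the exponential series \eqref{aux-g-to-U(g)-comod} will be not just convergent but essentially finite on each homogeneous piece.

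Next I would establish fixed-boundedness. The natural (strict inductive limit) topology on $S(\Fg^\ast)=\bigcup_n \bigoplus_{q=0}^n S_q(\Fg^\ast)$ is that of the countable direct sum of finite dimensional spaces, and the defining family of seminorms can be taken so that a fixed $v\in S(\Fg^\ast)$, living in some finite dimensional $\bigoplus_{q=0}^{N} S_q(\Fg^\ast)$, has $\sup_{\lb}q_\lb(v)<\infty$; indeed any algebraic element has only finitely many nonzero homogeneous coordinates, and one checks directly that on a strict inductive limit of finite dimensional spaces every vector is bounded. (Alternatively, one invokes Lemma \ref{lemma-loc-multp-convex-S}: the topology is defined by submultiplicative seminorms, and relative to such a seminorm a fixed element has finite value, giving the required supremum once one chooses the calibrating family appropriately.) With fixed-boundedness in hand, Proposition \ref{prop-g-coact-to-Ug-coact} applies verbatim and produces the coaction $\nb_\Fg^{\exp}:S(\Fg^\ast)\lra U(\Fg)\projot S(\Fg^\ast)$ of \eqref{aux-g-to-U(g)-comod}, which is a genuine topological $U(\Fg)$-comodule structure.

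Finally I would remark on why the resulting comodule is not locally conilpotent: the Koszul coaction on, say, a coordinate function $x_i\in S_1(\Fg^\ast)$ reproduces $x_i$ (up to the dual-basis identification), so the coaction does not become trivial after finitely many iterations on a general element — in contrast to the hypotheses of \cite[Def. 5.4]{RangSutl-II} — which is exactly the phenomenon the topological framework is designed to capture.

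The main obstacle is the fixed-boundedness verification: one must be careful that the \emph{particular} family of seminorms witnessing Definition \ref{def-fixed} is compatible with both the inductive-limit description of $S(\Fg^\ast)$ and with the estimates needed in the proof of Proposition \ref{prop-g-coact-to-Ug-coact}. Once the right calibrating family is pinned down — using that each $S_q(\Fg^\ast)$ is finite dimensional so that the coaction truncates degree by degree — the convergence of the exponential series is automatic and the rest is a direct appeal to the already-established Proposition \ref{prop-g-coact-to-Ug-coact} together with \cite[Prop. 5.7]{RangSutl-II} for the comodule axioms.
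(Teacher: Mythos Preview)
Your strategy differs from the paper's, and it contains a genuine error that undermines the whole argument.

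\medskip

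\textbf{The error about termination.} You assert that for $R\in S_q(\Fg^\ast)$ the iterated coaction ``lands in $U_q(\Fg)\projot S(\Fg^\ast)$ and in fact vanishes for $k>q$''. This is false. From the formula used in the paper's own proof, $\nb_\Fg(R)=\sum_i X_i\projot R f^i$, each application of the Koszul coaction \emph{raises} the $S(\Fg^\ast)$-degree by one: after $k$ iterations the right leg is $R f^{i_1}\cdots f^{i_k}\in S_{q+k}(\Fg^\ast)$, and the left leg sits in filtration degree $k$, not $q$, of $U(\Fg)$. Nothing vanishes. Indeed the entire point of this proposition (and the sentence immediately preceding it) is that $S(\Fg^\ast)$ is \emph{not} locally conilpotent, so the exponential \eqref{aux-g-to-U(g)-comod} is a bona fide infinite series on every nonzero homogeneous element. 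Your heuristic that the series is ``essentially finite on each homogeneous piece'' is therefore the opposite of the truth, and with it collapses the informal reason you give for convergence.

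\medskip

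\textbf{The fixed-boundedness reduction.} Once the termination claim is removed, your proof rests entirely on verifying Definition~\ref{def-fixed} for $S(\Fg^\ast)$ and then invoking Proposition~\ref{prop-g-coact-to-Ug-coact}. But you never actually produce a family of seminorms defining the inductive-limit topology for which the estimate in that proposition goes through; the appeal to ``each $S_q(\Fg^\ast)$ is finite dimensional'' or to Lemma~\ref{lemma-loc-multp-convex-S} does not by itself yield the uniform-in-$k$ bound $\rho(v\nsb{-k}\cdots v\nsb{-1}\projot v\nsb{0})\le \mu_{\lb_k}(v)$ with $\sup_\lb\mu_\lb(v)<\infty$ that the proof of Proposition~\ref{prop-g-coact-to-Ug-coact} needs. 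So the reduction, as written, is a gap rather than a shortcut.

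\medskip

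\textbf{What the paper does instead.} The paper does not try to force $S(\Fg^\ast)$ into the fixed-bounded framework. It argues directly: writing the $k$-th term as $\sum X_{i_k}\cdots X_{i_1}\projot R f^{i_1}\cdots f^{i_k}$, it uses that both $U(\Fg)$ and $S(\Fg^\ast)$ are locally multiplicatively convex (Lemmas~\ref{lemma-loc-multp-convex-U} and~\ref{lemma-loc-multp-convex-S}) to factor the tensor seminorm as a product, obtains a bound of the shape $q(R)\,\mu(1)^k$ via continuity of the coaction, and concludes that the partial sums are Cauchy because $\sum_k \mu(1)^k/k!$ converges. The submultiplicativity of the seminorms is exactly what controls the growth in $k$ that your argument leaves unaddressed.
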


\begin{proof}
Recalling the element $\sum_{i\in I} f^i\,\projot\,X_i \in \Fg^\circ \,\projot\,\Fg \cong \End(\Fg)$, we may express the Koszul coaction \eqref{Koszul-coact} as $\nb_\Fg(R)=\sum_{i \in I}X_i\,\projot\, Rf^i$ for any $R\in S(\Fg^\ast)$. As a result, we have
\begin{equation*}
\nb_\Fg^{\exp}(R) = \sum_{k=0}^\infty\frac{1}{k!}X_{i_k}\ldots X_{i_1} \,\projot\, Rf^{i_1}\cdots f^{i_k}.
\end{equation*}
Then, along the lines of the proof of Proposition \ref{prop-g-coact-to-Ug-coact}, for
\begin{equation}\label{aux-sn-for-Sg-ast}
s_n := \sum_{k=0}^n\frac{1}{k!}X_{i_k}\ldots X_{i_1} \,\projot\, Rf^{i_1}\cdots f^{i_k},
\end{equation}
and any seminorm $\rho_{p,q}$ on $U(\Fg) \,\projot\,S(\Fg^\ast)$, we observe that
\begin{align*}
\begin{split}
& \rho_{p,q}(s_n-s_m) \leq \sum_{k=m+1}^n\frac{1}{k!}p(X_{i_k}\ldots X_{i_1})q(Rf^{i_1}\cdots f^{i_k}) \\
& \leq \sum_{k=m+1}^n\frac{1}{k!}p(X_{i_k})\ldots p(X_{i_1})q(R)q(f^{i_1})\ldots q(f^{i_k}),
\end{split}
\end{align*}
since $U(\Fg)$ and $S(\Fg^\ast)$ are locally multiplicatively convex by Lemma \ref{lemma-loc-multp-convex-U} and Corollary \ref{lemma-loc-multp-convex-S}. It then follows from \cite[Prop. 7.7]{Treves-book} that there is a semi-norm $\mu$ on $S(\Fg^\ast)$ such that 
\begin{equation*}
\sum_{i_j}p(X_{i_j})q(f^{i_j})\leq \mu(1).
\end{equation*}
As a result,
\begin{equation*}
\rho_{p,q}(s_n-s_m) \leq \sum_{k=m+1}^n\frac{1}{k!}\mu(1)^kq(R).
\end{equation*}
Therefore, the sequence $(s_n)_{n\in \Nb}$ of partial sums given by \eqref{aux-sn-for-Sg-ast} is Cauchy, and hence converges in $U(\Fg)\,\projot\,S(\Fg^\ast)$.
\end{proof}

\ni We conclude this subsection with a short discussion on AYD modules over topological Lie algebras.

\begin{definition}
Let $V$ be a topological right module / left comodule over a topological Lie algebra $\Fg$. We call $V$ a right-left topological AYD module over $\Fg$ if
\begin{equation}
\nb(v\cdot X)=v\nsb{-1}\,\projot\, v\nsb{0}\cdot X + [v\nsb{-1},X]\,\projot\, v\nsb{0}.
\end{equation}
In addition, $V$ is called stable if
\begin{equation}
v\nsb{0}\cdot v\nsb{-1}=0.
\end{equation}
\end{definition}

\begin{proposition}\label{prop-g-AYD-to-Ug-AYD}
Let $\Fg$ be a topological Lie algebra, and $V$ a topological $\Fg$-module / comodule. Then $V$ is a topological right-left AYD module over $\Fg$ if and only if it is a topological right-left AYD module over $U(\Fg)$.
\end{proposition}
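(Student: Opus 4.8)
The plan is to prove the equivalence by reducing the $U(\Fg)$-AYD condition to the $\Fg$-AYD condition, exploiting the exponentiation of the coaction already established in Proposition \ref{prop-g-coact-to-Ug-coact} together with the standard fact that a $U(\Fg)$-module structure is the same as a $\Fg$-module structure. One direction is essentially a restriction argument: if $V$ is a $U(\Fg)$-AYD module, apply the coaction compatibility $\nb(v\cdot h)=S(h\ps{3})v\ns{-1}h\ps{1}\projot v\ns{0}\cdot h\ps{2}$ with $h=X\in\Fg\subseteq U(\Fg)$. Using $\D(X)=X\ot 1+1\ot X$ (so the relevant iterated coproduct components are among $X\ot1\ot1,\ 1\ot X\ot1,\ 1\ot1\ot X$), $S(X)=-X$, and then composing with the projection $\pi\projot\Id:U(\Fg)\projot V\to\Fg\projot V$ of Proposition \ref{prop-Ug-coact-to-g-coact}, the four-term expansion collapses precisely to $\nb(v\cdot X)=v\nsb{-1}\projot v\nsb{0}\cdot X+[v\nsb{-1},X]\projot v\nsb{0}$ once one identifies the $\Fg$-coaction as $\pi\projot\Id$ applied to the $U(\Fg)$-coaction and uses that $Xv\ns{-1}-v\ns{-1}X$ projects to $[X,v\nsb{-1}]$ in $\Fg$. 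Stability is immediate from the same projection, since $v\ns{0}\cdot v\ns{-1}$ in the $U(\Fg)$ sense projects to $v\nsb{0}\cdot v\nsb{-1}$ modulo the lower-order exponential tail, which vanishes on the trivial-in-$\Fg$ part.

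For the converse, the plan is to start from a $\Fg$-AYD module $V$, pass to the $U(\Fg)$-comodule structure $\nb_\Fg^{\exp}$ of \eqref{aux-g-to-U(g)-comod} (for which I would first need $V$ fixed-bounded, or argue that the AYD condition forces the relevant convergence directly), and verify the $U(\Fg)$-AYD identity for $h=u\in U(\Fg)$ by induction on the length of $u$ as a product of elements of $\Fg$. The base case $u=1$ is trivial, and the degree-one case $u=X\in\Fg$ is the content of the computation above read backwards; the inductive step uses that both sides of the AYD identity are, in a suitable sense, multiplicative/coalgebra-compatible in $h$, so that if the identity holds for $u$ and for $u'$ it holds for $uu'$. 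Concretely, I would check that the assignment $u\mapsto\big(\nb(v\cdot u)-S(u\ps{3})v\ns{-1}u\ps{1}\projot v\ns{0}\cdot u\ps{2}\big)$ vanishes on generators and is compatible with products, using the Hopf algebra axioms of $U(\Fg)$ (coassociativity, the antipode relation, and the fact that $\D$ is an algebra map). The exponential form of the coaction is exactly what makes this work: $\nb_\Fg^{\exp}$ is the unique algebra-compatible extension, mirroring how $\exp$ turns the Lie bracket relation into the group-like relation.

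The main obstacle I anticipate is the bookkeeping in the converse direction — propagating the AYD identity from $\Fg$ to all of $U(\Fg)$ through the exponentiated coaction, where one must handle the infinite series $\sum_k\frac1{k!}v\nsb{-k}\ldots v\nsb{-1}\projot v\nsb{0}$ together with the three-fold coproduct $u\ps{1}\projot u\ps{2}\projot u\ps{3}$ and the antipode, all while staying inside convergent expressions in the completed tensor product. I expect that the cleanest route is not a brute-force series manipulation but rather to observe that the $\Fg$-AYD condition says precisely that the $S(\Fg^\ast)$-module structure \eqref{aux-Sg-ast-action} is compatible with the $\Fg$-action in the manner recorded in \cite{RangSutl-II}, invoke the algebraic result \cite[Lemma 5.3]{RangSutl-II} (or its AYD analogue) that the algebraic $\Fg$-AYD and $U(\Fg)$-AYD notions agree on the level of the underlying vector spaces, and then simply note that all maps involved are continuous by the local multiplicative convexity of $U(\Fg)$ (Lemma \ref{lemma-loc-multp-convex-U}) and \cite[Lemma A.2.2]{BonnFlatGersPinc94}, so the algebraic identity upgrades verbatim to the topological setting. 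In other words, the substance is algebraic and already in the literature; the topological content is continuity, which is routine given the tools assembled above.
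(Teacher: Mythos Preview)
Your approach is essentially the same as the paper's: the paper's proof simply invokes Proposition~\ref{prop-Ug-coact-to-g-coact} for one direction of the comodule correspondence, Proposition~\ref{prop-g-coact-to-Ug-coact} for the other, and then cites \cite[Lemma~5.10]{RangSutl-II} (not Lemma~5.3) for the AYD compatibility itself. Your more detailed restriction argument for $U(\Fg)\Rightarrow\Fg$ and your induction-on-length sketch for $\Fg\Rightarrow U(\Fg)$ are exactly what underlies that citation, so the substance agrees; you are just unpacking what the paper delegates to the literature.

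One point worth flagging: you correctly note that the converse direction, via Proposition~\ref{prop-g-coact-to-Ug-coact}, requires $V$ to be fixed-bounded in order for the exponentiated coaction $\nb_\Fg^{\exp}$ to converge. The paper's proof invokes that proposition without restating this hypothesis, and the statement of the proposition itself does not include it, so your caution here is warranted rather than excessive. Your remark about stability is extraneous, since the proposition concerns only the AYD condition, not SAYD.
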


\begin{proof}
It follows from Proposition \ref{prop-Ug-coact-to-g-coact} that if $V$ is a left $U(\Fg)$-comodule, then it is a left $\Fg$-comodule. On the next step we use Proposition \ref{prop-g-coact-to-Ug-coact} to conclude that $V$ is a left $U(\Fg)$-comodule if it is a $\Fg$-comodule. Finally, the AYD condition is similar to \cite[Lemma 5.10]{RangSutl-II}.
 \end{proof}

\subsection{Cyclic cohomology theories for topological Lie algebras}\label{subsect-cyclic-complexes-lie-alg}

\ni In this subsection we extend to topological Lie algebras, the two cyclic complexes introduced in \cite{RangSutl-II}. More precisely, the complex associated to a Lie algebra and a SAYD module over it, generalizing the Lie algebra homology complex, and the one associated to a Lie algebra and a unimodular SAYD module over it, generalizing the Lie algebra cohomology complex.


\ni We shall begin with the unimodular stability \cite[Prop. 2.4]{RangSutl-II} on the level of topological Lie algebras.

\begin{definition}
Let $\Fg$ be a topological Lie algebra, and $V$ a topological right $\Fg$-module / left $\Fg$-comodule. Then $V$ is called unimodular stable over $\Fg$ if any element of $V$ is annihilated by $\sum_{i\in I} X_i\,\projot\,f^i \in \Fg\,\projot\,\Fg^\circ \cong \End(\Fg)$, that is,
\begin{equation}
v\cdot \left(\sum_{i\in I} X_i\,\projot\,f^i\right) = 0.
\end{equation}
\end{definition}

\ni If $\Fg$ is finite dimensional, then $\sum_{i\in I} X_i\,\projot\,f^i \in \Fg^\ast\,\ot\,\Fg$ consists of a dual pair of bases, and the above definition coincides with the one given in \cite{RangSutl-II}. Similarly, the stability can be given by the trivial action of $\sum_{i\in I} f^i\,\projot\,X_i\in \Fg^\circ\,\projot\,\Fg$.

\medskip

\ni Let us next recall the cohomology of the topological Lie algebras from \cite[Def. I.1]{Neeb06}, see also \cite{Tsuj81}. Given a topological Lie algebra $\Fg$, and a topological $\Fg$-module $V$, let $W_{\top}^n(\Fg,V)$ denote the space of continuous alternating maps $\Fg^{\times\,n}\lra V$ for $n\geq 0$. In other words, $W_{\top}^n(\Fg,V)$ is the set of continuous $n$-cochains with values in $V$. Then, $W_{\top}^n(\Fg,V)$ is a differential complex with the differential
\begin{align*}
\begin{split}
& d_{\rm CE}:W_{\top}^n(\Fg,V) \lra W_{\top}^{n+1}(\Fg,V),\\
& d_{\rm CE}(\a)(Y_0,\ldots, Y_n):=\sum_{i<j}(-1)^{i+j}\a([Y_i,Y_j],Y_0,\ldots,\widehat{Y}_i,\ldots,\widehat{Y}_j,\ldots Y_n) \\
& + \sum_{j=0}^n(-1)^{j+1}\a(Y_0,\ldots,\widehat{Y}_j,\ldots, Y_n)\cdot Y_j.
\end{split}
\end{align*}

\ni Similarly we define the Koszul boundary map, see \cite[Sect. 3.2]{RangSutl-II}, by
\begin{align*}
\begin{split}
& d_{\rm K}:W_{\top}^{n+1}(\Fg,V) \lra W_{\top}^n(\Fg,V),\\
& d_{\rm K}(\b)(Y_1,\ldots, Y_n):=\sum_{i\in I}\iota_{X_i}(\b)(Y_1,\ldots, Y_n)\cdot f^i = \sum_{i\in I}\b(X_i,Y_1,\ldots, Y_n)\cdot f^i.
\end{split}
\end{align*}
Since the truncation and the $S(\Fg^\ast)$-action are continuous, by Proposition \ref{prop-g-comod-Sg-mod}, the Koszul boundary map restricts to the continuous cochains. Moreover, for any $\b\in W_{\top}^{n+1}(\Fg,V)$,
\begin{equation*}
d_{\rm K}^2(\b)(Y_1,\ldots,Y_{n-1}) = \sum_{i,j\in I}\iota_{X_i}\iota_{X_j}(\b)(Y_1,\ldots,Y_{n-1})\cdot f^jf^i=0,
\end{equation*}
that is, $d_{\rm K}^2=0$.

\medskip

\ni We are now ready to define a cyclic cohomology theory for topological Lie algebras. To this end we record the following analogue of \cite[Thm. 2.4]{RangSutl-II}.

\begin{proposition}
Given a topological Lie algebra $\Fg$, and a topological (right) $\Fg$-module / (left) $\Fg$-comodule $V$,
\begin{equation}\label{aux-W-cont}
(W^\ast_{\top}(\Fg,V),d_{\rm CE}+d_{\rm K}), \qquad W^\ast_{\top}(\Fg,V):=\bigoplus_{n\geq 0}W_{\top}^n(\Fg,V)
\end{equation}
is a differential complex if and only if $V$ is a unimodular SAYD module over $\Fg$.
\end{proposition}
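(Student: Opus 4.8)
The plan is to reduce the topological statement to the corresponding algebraic identity of \cite[Thm.~2.4]{RangSutl-II}, using continuity to justify that the finite-dimensional calculations pass to the general setting. The goal is to show that $(d_{\rm CE}+d_{\rm K})^2 = 0$ on $W^\ast_{\top}(\Fg,V)$ if and only if $V$ is unimodular SAYD over $\Fg$. Since we already know $d_{\rm CE}^2 = 0$ (Chevalley--Eilenberg) and $d_{\rm K}^2 = 0$ (verified just above in the excerpt), the content of the statement is the vanishing of the anticommutator $d_{\rm CE}d_{\rm K} + d_{\rm K}d_{\rm CE}$, together with the claim that this vanishing is equivalent to the AYD plus unimodular-stability conditions.

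First I would set up notation: fix $\b \in W^n_{\top}(\Fg,V)$ and expand $(d_{\rm CE}d_{\rm K} + d_{\rm K}d_{\rm CE})(\b)$ evaluated on $Y_1,\ldots,Y_n \in \Fg$, inserting the defining formulas for $d_{\rm CE}$ and $d_{\rm K}$ and using the element $\rho = \sum_{i\in I} f^i\,\projot\,X_i \in \Fg^\circ\,\projot\,\Fg$ corresponding to $\Id$, exactly as in Proposition~\ref{prop-g-comod-Sg-mod}. The terms naturally organize into: (a) terms where a bracket $[Y_i,Y_j]$ meets a contraction $\iota_{X_k}$ and an $f^k$-action; (b) terms where $\iota_{X_k}$ lands on the same slot carrying the $\cdot Y_j$ action, which after using $\iota_{X_k}$ and reindexing produce the AYD defect $\nb(v\cdot X) - v\nsb{-1}\projot v\nsb{0}\cdot X - [v\nsb{-1},X]\projot v\nsb{0}$; and (c) a pure ``trace'' term $\sum_{i,k} \iota_{X_i}\iota_{[X_k,\,\cdot\,]}$ type contribution that collapses to the action of $\sum_{i} X_i\,\projot\,f^i$, i.e.\ the unimodular-stability element. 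The key algebraic observation, inherited from \cite{RangSutl-II}, is that the AYD condition makes family (b) cancel against part of (a), and the remaining piece is precisely $v \mapsto v\cdot(\sum_i X_i\,\projot\,f^i)$, which vanishes exactly when $V$ is unimodular stable. For the converse, one chooses test cochains (for instance low-degree ones, $n=0$ and $n=1$, valued suitably) to isolate first the AYD defect and then the unimodular-stability defect, showing each must vanish separately.

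The role of the topology is modest but must be addressed explicitly: all the maps involved ($d_{\rm CE}$, $d_{\rm K}$, the $\Fg$-action, the $\Fg$-coaction, the $S(\Fg^\ast)$-action) are continuous by hypothesis and by Proposition~\ref{prop-g-comod-Sg-mod}, so $d_{\rm CE}+d_{\rm K}$ indeed maps continuous cochains to continuous cochains, and the composites are continuous. The sums indexed by $I$ that appear (coming from $\rho$) are exactly the well-defined finite-type expressions guaranteed by $\Fg^\circ\,\projot\,\Fg \cong \End(\Fg)$ via \cite[Prop.~1(iii)]{BonnSter05}; because $\rho$ corresponds to a single operator ($\Id$), each expression like $\sum_{i\in I}\iota_{X_i}(\b)(\cdots)\cdot f^i$ is genuinely a single well-defined vector, not an infinite series needing convergence, so no completion issues arise beyond what is already handled. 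Hence the identity $(d_{\rm CE}+d_{\rm K})^2=0$ is the same polynomial identity in the structure maps as in the algebraic case, and it holds in the topological category verbatim.

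The main obstacle I anticipate is purely bookkeeping: correctly tracking the signs and reindexing when a contraction $\iota_{X_k}$ is moved past the Chevalley--Eilenberg differential, and matching the surviving terms against the AYD and unimodular-stability defects without sign errors --- this is where \cite[Thm.~2.4]{RangSutl-II} already did the heavy lifting, so I would invoke that computation directly and only point out that each step remains valid after replacing dual bases by the element $\rho$ and insisting on continuity. A secondary, minor point is to make the converse direction clean: one must exhibit enough continuous cochains to detect each defect, which is unproblematic since constant $V$-valued $0$-cochains and linear $1$-cochains are automatically continuous and suffice to separate the two conditions. So the proof is, in effect, ``the algebraic identity of \cite[Thm.~2.4]{RangSutl-II} holds termwise, and every map in sight is continuous, hence it holds topologically,'' with the converse read off by testing on low-degree cochains.
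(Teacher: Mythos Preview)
Your proposal is correct and follows essentially the same approach as the paper: both compute the anticommutator $d_{\rm CE}d_{\rm K}+d_{\rm K}d_{\rm CE}$ explicitly using the element $\rho=\sum_{i\in I}f^i\,\projot\,X_i$, then read off the ``only if'' direction by specializing to $\gamma\in W^0_{\top}(\Fg,V)$ (unimodular stability) and $\gamma\in W^1_{\top}(\Fg,V)$ (AYD condition). Your additional remarks on why continuity and the $\End(\Fg)$ identification make the $I$-indexed sums well defined are more explicit than the paper's treatment, but the underlying argument is the same.
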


\begin{proof}
Given any $\g \in W_{\top}^n(\Fg,V)$ for $n\geq 0$, we have
\begin{align*}
\begin{split}
& d_{\rm CE}d_{\rm K}(\g)(Y_0,\ldots,Y_{n-1})=\\
& \sum_{i<j}(-1)^{i+j}d_{\rm K}(\g)([Y_i,Y_j],Y_0,\ldots,\widehat{Y}_i,\ldots,\widehat{Y}_j,\ldots Y_{n-1}) \\
& + \sum_{j=0}^{n-1}(-1)^{j+1}d_{\rm K}(\g)(Y_0,\ldots,\widehat{Y}_j,\ldots, Y_{n-1})\cdot Y_j \\
& =\sum_{k\in I}\sum_{i<j}(-1)^{i+j}\g(X_k,[Y_i,Y_j],Y_0,\ldots,\widehat{Y}_i,\ldots,\widehat{Y}_j,\ldots ,Y_{n-1})\cdot f^k \\
& + \sum_{k\in I}\sum_{j=0}^{n-1}(-1)^{j+1}(\g(X_k,Y_0,\ldots,\widehat{Y}_j,\ldots, Y_{n-1})\cdot f^k)\cdot Y_j,
\end{split}
\end{align*}
as well as
\begin{align*}
\begin{split}
& d_{\rm K}d_{\rm CE}(\g)(Y_0,\ldots,Y_{n-1})= \sum_{k\in I}d_{\rm CE}(\g)(X_k,Y_0,\ldots,Y_{n-1})\cdot f^k \\
& = \sum_{k\in I}\sum_{i<j}(-1)^{i+j}\g([Y_i,Y_j],X_k,Y_0,\ldots,\widehat{Y}_i,\ldots,\widehat{Y}_j,\ldots, Y_{n-1})\cdot f^k\\
& + \sum_{k\in I}\sum_{j=0}^{n-1}(-1)^{j+1}\g([X_k,Y_j],Y_0,\ldots,\widehat{Y}_j,\ldots,Y_{n-1})\cdot f^k\\
& - \sum_{k\in I}(\g(Y_0,\ldots,Y_{n-1})\cdot X_k)\cdot f^k\\
& +\sum_{k\in I}\sum_{j=0}^{n-1}(-1)^{j}(\g(X_k,Y_0,\ldots,\widehat{Y}_j,\ldots,Y_{n-1})\cdot Y_j)\cdot f^k.
\end{split}
\end{align*}
As a result, \eqref{aux-W-cont} is a differential complex, \ie $d_{\rm CE}d_{\rm K}+d_{\rm K}d_{\rm CE}=0$ if and only if for any $\g\in W_{\top}^n(\Fg,V)$, and any $Y_0,\ldots,Y_{n-1}\in\Fg$,
\begin{align}\label{diffble-complex-condition}
\begin{split}
& \sum_{k\in I}\sum_{j=0}^{n-1}(-1)^{j+1}\big[(\g(X_k,Y_0,\ldots,\widehat{Y}_j,\ldots, Y_{n-1})\cdot f^k)\cdot Y_j  \\
&\hspace{2cm} -\sum_{k\in I}(\g(X_k,Y_0,\ldots,\widehat{Y}_j,\ldots,Y_{n-1})\cdot Y_j)\cdot f^k\big]\\
&+ \sum_{k\in I}\sum_{j=0}^{n-1}(-1)^{j+1}\g([X_k,Y_j],Y_0,\ldots,\widehat{Y}_j,\ldots,Y_{n-1})\cdot f^k\\
&- \sum_{k\in I}(\g(Y_0,\ldots,Y_{n-1})\cdot X_k)\cdot f^k=0.
\end{split}
\end{align}
Now, if $W_{\top}(\Fg,V)$ is a differential complex, then \eqref{diffble-complex-condition} for a $\g\in W^0_{\top}(\Fg,V) := V$ yields
\begin{equation}\label{unimodular-stability}
\sum_{k\in I}(\g\cdot X_k)\cdot f^k =0,
\end{equation}
that is, the unimodular stability, and for a $\g\in W^1_{\top}(\Fg,V)$, provides
\begin{align}\notag
& \sum_{k\in I}(\g(X_k)\cdot Y)\cdot f^k-\sum_{k\in I}(\g(X_k)\cdot f^k)\cdot Y \\\notag
& \hspace{3cm}-\sum_{k\in I}\g([X_k,Y])\cdot f^k-\sum_{k\in I}(\g(Y)\cdot X_k)\cdot f^k =\\\label{AYD-funct}
& \sum_{k\in I}(\g(X_k)\cdot Y)\cdot f^k- \sum_{k\in I}(\g(X_k)\cdot f^k)\cdot Y-\sum_{k\in I}\g([X_k,Y])\cdot f^k=0,
\end{align}
which is precisely the AYD compatibility. Conversely, if $V$ is a unimodular stable AYD module over $\Fg$, then we already have \eqref{unimodular-stability}, the unimodular stability, in view of which \eqref{diffble-complex-condition} reduces to 
\begin{align}\label{diffble-complex-condition-reduced}
\begin{split}
& \sum_{k\in I}\sum_{j=0}^{n-1}(-1)^{j+1}\big[(\g(X_k,Y_0,\ldots,\widehat{Y}_j,\ldots, Y_{n-1})\cdot f^k)\cdot Y_j  \\
&\hspace{2cm} -\sum_{k\in I}(\g(X_k,Y_0,\ldots,\widehat{Y}_j,\ldots,Y_{n-1})\cdot Y_j)\cdot f^k\big]\\
&+ \sum_{k\in I}\sum_{j=0}^{n-1}(-1)^{j+1}\g([X_k,Y_j],Y_0,\ldots,\widehat{Y}_j,\ldots,Y_{n-1})\cdot f^k = 0,
\end{split}
\end{align}
and \eqref{AYD-funct} for any $\g\in W^1_{\top}(\Fg,V)$. Given $\g\in W^n_{\top}(\Fg,V)$, and $Z_1,\ldots, Z_{n-1} \in \Fg$, setting
\[
\widetilde{\g} \in W^1_{\top}(\Fg,V), \qquad \widetilde{\g}(X) := \g(X,Z_1,\ldots, Z_{n-1}),
\]
we see that \eqref{AYD-funct}  implies \eqref{diffble-complex-condition-reduced}.
\end{proof}

\begin{definition}
The cyclic cohomology of $\Fg$ with coefficients in a unimodular SAYD module $V$ over $\Fg$, which is  denoted by $\widetilde{HC}^\ast_{\top}(\Fg,V)$, is defined to be the total cohomology of the bicomplex
\begin{align*}
W_{\top}^{p,q}(\Fg,V)= \left\{ \begin{matrix} W_{\top}^{q-p}(\Fg,V),&\quad\text{if}\quad 0\le p\le q, \\
&\\
0, & \text{otherwise.}
\end{matrix}\right.
\end{align*}
Similarly, the periodic cyclic cohomology of $\Fg$ with coefficients in a unimodular SAYD module $V$ over $\Fg$, which is denoted by $\widetilde{HP}^\ast_{\top}(\Fg,V)$, is defined to be the total cohomology of
\begin{align*}
W_{\top}^{p,q}(\Fg,V)= \left\{ \begin{matrix} W_{\top}^{q-p}(\Fg,V),&\quad\text{if}\quad  p\le q, \\
&\\
0, & \text{otherwise.}
\end{matrix}\right.
\end{align*}
\end{definition}

\ni Let $\Fg$ be a topological Lie algebra, and $V$ a topological right $\Fg$-module / left $\Fg$-comodule. Similar to \cite[Sect. 4.2]{RangSutl-II}, we can associate a cyclic cohomology theory to a topological Lie algebra $\Fg$, and a SAYD module over it, by setting
\begin{equation*}
C_n^{\top}(\Fg,V):=\wg^n\Fg\,\projot\,V
\end{equation*}
with two continuous differentials
\begin{align*}
\begin{split}
& \part_{\rm CE}: C_{n+1}^{\top}(\Fg,V) \lra C_n^{\top}(\Fg,V),\\
& \part_{\rm CE}(Y_0\wdots Y_n\,\projot\,v) = \sum_{j=0}^n(-1)^{j+1} Y_0\wdots \widehat{Y}_j \wdots Y_n\,\projot\,v\cdot Y_j + \\
& \sum_{j,k=0}^n(-1)^{j+k} [Y_j,Y_k]\wg Y_0\wdots \widehat{Y}_j \wdots \widehat{Y}_k \wdots Y_n\,\projot\,v,
\end{split}
\end{align*}
and
\begin{align*}
\begin{split}
& \part_{\rm K}: C_n^{\top}(\Fg,V) \lra C_{n+1}^{\top}(\Fg,V),\\
& \part_{\rm K}(Y_1\wdots Y_n\,\projot\,v) = v\nsb{-1}\wg Y_1\wdots Y_n\,\projot\,v\nsb{0}.
\end{split}
\end{align*}
Having defined the (co)boundary maps, we can immediately state the following analogue of \cite[Prop. 4.2]{RangSutl-II}.

\begin{proposition}
Let $\Fg$ be a topological Lie algebra, and $V$ a topological right $\Fg$-module / left $\Fg$-comodule. Then $(C^{\top}_\ast(\Fg,V),\,\part_{\rm CE}+\part_{\rm K})$ is a differential complex if and only if $V$ is a SAYD module over $\Fg$.
\end{proposition}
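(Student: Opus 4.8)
The plan is to verify directly that the composite $(\part_{\rm CE}+\part_{\rm K})^2$ vanishes on $C^{\top}_\ast(\Fg,V)$ if and only if the SAYD conditions hold, mimicking the proof of \cite[Prop. 4.2]{RangSutl-II} but with attention to the topological subtleties. First I would note that $\part_{\rm CE}^2=0$ is the classical Chevalley--Eilenberg identity for the right $\Fg$-module $V$, and requires nothing new beyond the continuity of the action, while $\part_{\rm K}^2=0$ follows from the antisymmetrization of the coaction, i.e.\ from $v\nsb{-2}\wg v\nsb{-1}\ot v\nsb{0}=0$ together with the continuity of the $\Fg$-coaction (so that $\part_{\rm K}$ is well-defined on $\wg^n\Fg\,\projot\,V$ after completion, by Proposition \ref{prop-g-comod-Sg-mod}). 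The genuine content is therefore the vanishing of the anticommutator $\part_{\rm CE}\part_{\rm K}+\part_{\rm K}\part_{\rm CE}$.

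The key step is to expand both $\part_{\rm CE}\part_{\rm K}(Y_1\wdots Y_n\,\projot\,v)$ and $\part_{\rm K}\part_{\rm CE}(Y_1\wdots Y_n\,\projot\,v)$ term by term. In the first composite, applying $\part_{\rm K}$ inserts $v\nsb{-1}$ into the wedge, and then $\part_{\rm CE}$ either contracts one of the $Y_j$ with the module action, contracts $v\nsb{-1}$ itself with the module action (producing a term $v\nsb{0}\cdot v\nsb{-1}$), or forms brackets $[v\nsb{-1},Y_j]$ and $[Y_i,Y_j]$. In the second composite, $\part_{\rm CE}$ first produces the bracket terms and the action terms, and then $\part_{\rm K}$ coacts on the result; the crucial point is that coacting on $v\cdot Y_j$ is governed by $\nb(v\cdot Y_j)$, which is exactly where the AYD compatibility $\nb(v\cdot X)=v\nsb{-1}\,\projot\, v\nsb{0}\cdot X + [v\nsb{-1},X]\,\projot\, v\nsb{0}$ enters. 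After cancelling the matching $[Y_i,Y_j]$ and plain-action terms between the two composites, what survives is, on the one hand, the discrepancy between $\part_{\rm K}(v\cdot Y_j)$ and the naive insertion, which is compensated precisely by the $[v\nsb{-1},Y_j]$ terms under the AYD condition, and on the other hand a leftover term $v\nsb{0}\cdot v\nsb{-1}$ wedged against $Y_1\wdots Y_n$, which vanishes exactly when $V$ is stable. Conversely, specializing the identity $\part_{\rm CE}\part_{\rm K}+\part_{\rm K}\part_{\rm CE}=0$ to $n=0$ forces the stability $v\nsb{0}\cdot v\nsb{-1}=0$, and to $n=1$ forces the AYD compatibility, exactly as in the ``only if'' direction of the module-coalgebra proposition proved just above in the excerpt.

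Throughout, I would record the routine but necessary observation that each map in sight is continuous: $\part_{\rm CE}$ because the bracket and the $\Fg$-action are continuous and $\wg^n\Fg$ carries the (quotient of the) projective topology, and $\part_{\rm K}$ because the $\Fg$-coaction is continuous, so the wedge-insertion $v\mapsto v\nsb{-1}\wg(-)\,\projot\,v\nsb{0}$ extends to the completed tensor product; hence all identities, once checked on the algebraic tensor product $\wg^n\Fg\,\ot\,V$, extend by density and continuity to $\wg^n\Fg\,\projot\,V$. I expect the main obstacle to be purely bookkeeping: correctly tracking the signs and the positions at which $v\nsb{-1}$ is inserted when it is subsequently contracted or bracketed, and matching these against the sign conventions in $\part_{\rm CE}$, so that the cancellation is transparent and the two residual terms (AYD and stability) are isolated cleanly. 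This is entirely parallel to \cite[Prop. 4.2]{RangSutl-II}, and I would in fact invoke that computation, indicating only the points where the passage to continuous cochains and completed tensor products needs the continuity remarks above; no essentially new algebraic identity is required.
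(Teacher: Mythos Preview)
Your proposal is correct and matches the paper's approach: the paper gives no proof at all for this proposition, merely stating it as ``the following analogue of \cite[Prop. 4.2]{RangSutl-II}'' immediately after defining $\part_{\rm CE}$ and $\part_{\rm K}$. Your sketch is precisely the computation from that reference, supplemented with the continuity remarks needed to pass to completed tensor products, which is exactly what the paper intends by the word ``analogue''.
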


\ni As a result, we define the cyclic cohomology of a topological Lie algebra, and a topological SAYD module associated to it.

\begin{definition}
The cyclic cohomology of $\Fg$ with coefficients in a SAYD module $V$ over $\Fg$, which is  denoted by $HC^\ast_{\top}(\Fg,V)$, is defined to be the total cohomology of the bicomplex
\begin{align*}
C^{\top}_{p,q}(\Fg,V)= \left\{ \begin{matrix} C^{\top}_{q-p}(\Fg,V),&\quad\text{if}\quad 0\le p\le q, \\
&\\
0, & \text{otherwise.}
\end{matrix}\right.
\end{align*}
Similarly, the periodic cyclic cohomology of $\Fg$ with coefficients in a SAYD module $V$ over $\Fg$, which is denoted by $HP^\ast_{\top}(\Fg,V)$, is defined to be the total cohomology of
\begin{align*}
C^{\top}_{p,q}(\Fg,V)= \left\{ \begin{matrix} C^{\top}_{q-p}(\Fg,V),&\quad\text{if}\quad  p\le q, \\
&\\
0, & \text{otherwise.}
\end{matrix}\right.
\end{align*}
\end{definition}

\section{From Hopf-cyclic to Lie cyclic}\label{sect-comput}

\ni In this section we identify the Hopf-cyclic cohomology of the Hopf algebra $\Fc^\infty(G_2)\tacl U(\Fg_1)$, associated to a matched pair $(G_1,G_2)$ of Lie groups, with the cyclic cohomology of the Lie algebra $\Fg_1 \bowtie \Fg_2$, associated to the (matched) pair of Lie algebras $(\Fg_1,\Fg_2)$ of the Lie groups $(G_1,G_2)$. To this end, we consider in the first subsection the coalgebra Hochschild cohomology of $\Fc^\infty(G)$, which appears in the $E_1$-page of a spectral sequence computing the Hopf-cyclic cohomology of $\Fc^\infty(G_2)\tacl U(\Fg_1)$.

\subsection{Coalgebra Hochschild cohomology of $\Fc^\infty(G)$}\label{subsect-comput-Hochschild-cohom}

In this subsection we identify the coalgebra Hochschild cohomology of the Hopf algebra $\Fc^\infty(G)$ of differentiable functions on a real analytic group $G$,  with the differentiable group cohomology (and hence the continuous cohomology by \cite{HochMost62}) of the group $G$. The latter, in turn, is identified with the relative Lie algebra cohomology of the Lie algebra $\Fg$ of $G$, relative to the Lie algebra of a maximal compact subgroup of $G$, \cite{vanEst53,Dupo76,HochMost62}.

\ni Let us first recall the differentiable cohomology from \cite{HochMost62}. Keeping Definition \ref{def-diffble-module}  and Definition \ref{def-diffble-map} (of a differentiable $G$-module, and of a differentiable map, respectively) in mind, given a differentiable $G$-module $V$, let $C^n_d(G,V)$ be the space of all differentiable maps $G^{\times\,n}\to V$, and
\begin{equation*}
C^\ast_d(G,V) = \bigoplus_{n\geq 0}C_d^n(G,V),
\end{equation*}
together with the differential map $d: C_d^n(G,V) \to C_d^{n+1}(G,V)$ which is given by
\begin{align}\label{aux-non-hom-cobound}
\begin{split}
& d(c)(g_1,\ldots,g_{n+1}):= g_1\cdot c(g_2,\ldots,g_{n+1}) +\\
&\hspace{2cm} \sum_{i=1}^n(-1)^ic(g_1,\ldots,g_ig_{i+1},\ldots,g_{n+1}) + (-1)^{n+1}c(g_1,\ldots,g_n).\\
\end{split}
\end{align}
Then $(C^\ast_d(G,V),d)$ is a differential graded complex whose cohomology is called the differentiable cohomology of $G$ with coefficients in the differentiable $G$-module $V$, and is denoted by $H^\ast_d(G,V)$.

\begin{proposition}\label{porp-diff-cohom}
Let $V$ be a left $\Fc^\infty(G)$-comodule via $v\mapsto v^{\ns{-1}}\,\projot\, v^{\ns{0}}$. Then the map
\begin{align*}
\begin{split}
& \Psi:C^n(\Fc^\infty(G),V)\lra C_d^n(G,V),\\
& \Psi(v\,\projot\,f^1\,\projot\,\ldots \,\projot\, f^n)(g_1,\ldots,g_n) := vf^1(g_n^{-1})f^2(g_{n-1}^{-1})\,\ldots\,f^n(g_1^{-1})
\end{split}
\end{align*}
is an isomorphism of complexes.
\end{proposition}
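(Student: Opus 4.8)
The statement asserts that the explicit map $\Psi$ between the coalgebra Hochschild cochain complex $C^\ast(\Fc^\infty(G),V)$ and the differentiable group cochain complex $C^\ast_d(G,V)$ is an isomorphism of complexes. I would organize the argument in three parts: (i) $\Psi$ is well-defined, i.e. it lands in $C^n_d(G,V)$; (ii) $\Psi$ is a bijection in each degree; (iii) $\Psi$ intertwines the two differentials, i.e. $\Psi\circ b_{\mathrm{coalg}} = d\circ\Psi$, where $d$ is the non-homogeneous group coboundary \eqref{aux-non-hom-cobound}. The first two parts are essentially formal once one has the right structural input, and the third is a direct but bookkeeping-heavy computation; I expect the third to be the main obstacle only in the sense of care required, not in conceptual difficulty.

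\textbf{Well-definedness and bijectivity.} For (i), recall that a coalgebra Hochschild $n$-cochain with coefficients in the left comodule $V$ is an element of $C^n(\Fc^\infty(G),V)=V\,\projot\,\Fc^\infty(G)^{\projot\, n}$; applying the evaluation $f\mapsto f(g^{-1})$ in each tensor slot and using continuity of evaluation on $\Fc^\infty(G)$ (together with the nuclearity/Fr\'echet hypotheses that make $\projot$ well-behaved) produces a continuous map $G^{\times n}\to V$, and differentiability in each argument follows from the fact that each $f^i\in\Fc^\infty(G)$ is differentiable in the sense of Definition \ref{def-diffble-map}, exactly as in the proof of Proposition \ref{prop-diff-ble-G-module}; the coefficient action does not enter here since the comodule structure has been packaged into the cochain space. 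For (ii), the map $\Psi$ is built slot-by-slot from the evaluation isomorphism; the inverse is induced by the identification of differentiable functions $G^{\times n}\to V$ with $V\,\projot\,\Fc^\infty(G)^{\projot n}$, which is precisely the content of the well-behavedness of these spaces (the exponential-law / kernel theorem for nuclear Fr\'echet spaces, cf. the tensor-product facts cited from \cite{Treves-book,BonnFlatGersPinc94}). The reversal of the order of the arguments ($f^1$ paired with $g_n$, etc.) and the inversion $g\mapsto g^{-1}$ are cosmetic reindexings that make the next step come out with the standard signs.

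\textbf{Compatibility with differentials.} This is the heart of the matter. On the coalgebra side the Hochschild coboundary $b$ on $C^n(\Fc^\infty(G),V)$ is the alternating sum of the comodule coface $v\mapsto v^{\ns{-1}}\projot v^{\ns{0}}$ on the $V$-slot, the $n$ comultiplications $\D$ applied in the $\Fc^\infty(G)$-slots, and the counit-type face at the end; I would apply $\Psi$ to each term and evaluate. Under the evaluation pairing, $\D(f)$ evaluated at $(g,g')$ becomes $f(gg')$ — this converts the comultiplication cofaces into the ``multiply adjacent group arguments'' terms $c(\ldots,g_ig_{i+1},\ldots)$ of \eqref{aux-non-hom-cobound}; the comodule coface on the $V$-slot, via the coaction $v\mapsto v^{\ns{-1}}\projot v^{\ns{0}}$ and the formula \eqref{aux-G-action} $v\cdot g = v^{\ns{-1}}(g)v^{\ns{0}}$ from Proposition \ref{prop-diff-ble-G-module}, becomes the leading term $g_1\cdot c(g_2,\ldots,g_{n+1})$; and the last face (applying $\ve$, equivalently evaluation at the identity $e\in G$, with the correct sign) becomes the tail term $(-1)^{n+1}c(g_1,\ldots,g_n)$. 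The order reversal and the inversions are exactly what is needed so that the signs $(-1)^i$ line up: evaluating $f^i$ at $g_{n+1-i}^{-1}$ turns the coalgebra-side index $i$ into the group-side index $n+1-i$, and one checks the sign $(-1)^i$ is preserved. I would carry this out by a direct term-by-term match; the only subtlety is tracking how the reindexing $i\leftrightarrow n+1-i$ interacts with the signs and with which pair of adjacent slots a given $\D$ merges, and confirming the first and last terms of $d$ correspond to the $V$-coface and the $\ve$-face respectively. Since both complexes carry the standard simplicial structure, once the cofaces match the coboundaries match automatically. (If one wanted to avoid the reversal entirely, the same computation goes through with $\Psi$ defined without inverting and without reversing, at the cost of comparing against the ``opposite'' convention for $d$; I prefer the stated $\Psi$ because it reproduces \eqref{aux-non-hom-cobound} on the nose.) This establishes that $\Psi$ is an isomorphism of cochain complexes, as claimed.
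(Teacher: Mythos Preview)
Your approach is essentially the same as the paper's: a direct term-by-term verification that the cofaces of the coalgebra Hochschild complex are carried by $\Psi$ to the cofaces of the differentiable group cochain complex, with the reindexing $j\leftrightarrow n+1-j$ absorbing the order reversal. The paper's proof does only this computation and omits the well-definedness and bijectivity arguments that you sketch in parts (i) and (ii); those are genuine additions on your part, and the appeal to the nuclear kernel theorem for the identification $V\,\projot\,\Fc^\infty(G)^{\projot\,n}\cong C^n_d(G,V)$ is the right justification.

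Two small points of friction. First, in the paper's convention the coface $\p_0$ inserts the unit $1\in\Fc^\infty(G)$ in the first slot and $\p_{n+1}$ is the comodule coface inserting $v^{\ns{-1}}$ at the end, which is the reverse of how you describe the ordering of the extreme faces; your identification of which extreme face of $b$ matches which extreme term of $d$ is nonetheless correct. Second, and more substantively, the paper's computation actually yields
\[
d\circ\Psi \;=\; (-1)^{n+1}\,\Psi\circ b
\]
rather than strict commutation; so $\Psi$ is a chain map only up to the degree-dependent sign $(-1)^{n+1}$, and one should either say ``isomorphism of complexes up to sign'' or modify $\Psi$ by a factor such as $(-1)^{n(n+1)/2}$ to obtain a genuine chain map. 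Your claim that the signs ``line up on the nose'' is slightly too optimistic, but the conclusion is unaffected.
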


\begin{proof}
We note by Proposition \ref{prop-diff-ble-G-module} that being a left $\Fc^\infty(G)$-comodule, $V$ is a differentiable right $G$-module. Thus, $V$ possesses the left $G$-module structure given by $x\cdot v:=v\cdot x^{-1}$. Accordingly we have
\begin{align}\label{aux-d-Psi}
\begin{split}
& d\left(\Psi(v\,\projot\,f^1\,\projot\,\ldots \,\projot\, f^n)\right)(g_1,\ldots,g_{n+1}) = \\
& \hspace{4cm} g_1 \cdot \Psi(v\,\projot\,f^1\,\projot\,\ldots \,\projot\, f^n) (g_2,\ldots, g_{n+1}) + \\
& \sum_{j=1}^n (-1)^j \Psi(v\,\projot\,f^1\,\projot\,\ldots \,\projot\, f^n)(g_1,\ldots,g_jg_{j+1},\ldots,g_{n+1}) + \\
& \hspace{4cm} \Psi(v\,\projot\,f^1\,\projot\,\ldots \,\projot\, f^n) (g_1,\ldots, g_n) = \\
& g_1\cdot v f^1(g_{n+1}^{-1}) \ldots f^n(g_2^{-1}) + \\
& \sum_{j=1}^{n} (-1)^{j} vf^1(g_{n+1}^{-1}) \cdots f^{n-j+1}((g_jg_{j+1})^{-1})\cdot f^n(g_1^{-1}) + \\
& (-1)^{n+1} vf^1(g_n^{-1}) \cdot f^n(g_1^{-1}),
\end{split}
\end{align}
and
\begin{align*}
\begin{split}
& \Psi(b\left(v\,\projot\,f^1\,\projot\,\ldots \,\projot\, f^n\right))(g_1,\ldots,g_{n+1}) = \\
& \Psi(v\,\projot\, \ve\,\projot\,f^1\,\projot\,\ldots \,\projot\, f^n)(g_1,\ldots,g_{n+1}) + \\
& \sum_{j=1}^n (-1)^j \Psi(v\,\projot\,f^1\,\projot\,\ldots \,\projot\, \D(f^j)\,\projot\,\ldots \,\projot\, f^n)(g_1,\ldots,g_{n+1}) +\\
& (-1)^{n+1} \Psi(v^{\ns{0}}\,\projot\,f^1\,\projot\,\ldots \,\projot\, f^n\,\projot\, v^{\ns{-1}})(g_1,\ldots,g_{n+1}) =\\
& vf^1(g_n^{-1})\ldots f^n(g_1^{-1}) + \\
& \sum_{j=1}^n (-1)^j v f^1(g_{n+1}^{-1}) \ldots f^j(g_{n-j+2}^{-1}g_{n-j+1}^{-1}) \ldots f^n(g_1^{-1}) + \\
& (-1)^{n+1} g_1\cdot v f^1(g_{n+1}^{-1})\ldots f^n(g_1^{-1}).
\end{split}
\end{align*}
On the other hand, for $1 \leq j\leq n$, one has $1\leq k:=n+1-j \leq n$, and by this substitution we rewrite \eqref{aux-d-Psi} as
\begin{align*}
\begin{split}
& d\left(\Psi(v\,\projot\,f^1\,\projot\,\ldots \,\projot\, f^n)\right)(g_1,\ldots,g_{n+1}) = \\
& g_1\cdot v f^1(g_{n+1}^{-1}) \ldots f^n(g_2^{-1}) + \\
& \sum_{k=1}^{n} (-1)^{n+1-k} vf^1(g_{n+1}^{-1}) \ldots f^{k}((g_{n+1-k}g_{n+2-k})^{-1})\cdot f^n(g_1^{-1}) + \\
& (-1)^{n+1} vf^1(g_n^{-1}) \cdot f^n(g_1^{-1}),
\end{split}
\end{align*}
As a result, we see that
\begin{equation*}
d\left(\Psi(v\,\projot\,f^1\,\projot\,\ldots \,\projot\, f^n)\right) = (-1)^{n+1} \Psi(b\left(v\,\projot\,f^1\,\projot\,\ldots \,\projot\, f^n\right)).
\end{equation*}
\end{proof}

\ni Following \cite{Most61,HochMost62} we denote by $H^\ast_c(G,V)$ the continuous cohomology of a topological group $G$, with coefficients in a continuous (left) $G$-module. A continuous $G$-module is defined to be a Hausdorff t.v.s. such that the $G$-action $G\times V \lra V$ is continuous, and $H^\ast_c(G,V)$ is defined to be the homology with respect to the coboundary \eqref{aux-non-hom-cobound}.

\ni Let us next recall the relation between the continuous cohomology and the differentiable cohomology from \cite[Thm. 5.1]{HochMost62}. To this end, we need to recall the integrability of the coefficient space \cite{HochMost62}, see also \cite[2.13]{Most61}.

\begin{definition}
Let $V$ be a t.v.s., and $G$ a locally compact topological group. Let also $F(G,V)$ be the space of all continuous maps $G\lra V$ with compact support, topologized by the compact-open topology. Then $V$ is called $G$-integrable if there is a continuous map $J_G:F(G,V) \lra V$ such that for any $\g\in V^\circ$ and any $f\in F(G,V)$,
\begin{equation}
\g(J_G(f)) = I_G(\g\circ f),
\end{equation}
where $I_G$ is a Haar integral on $G$.
\end{definition}

\ni In case $G$ is replaced by $\Rb$, and the Haar integral by an ordinary integral over $[0,1]$, $V$ is called $[0,1]$-integrable. The following is \cite[Thm. 5.1]{HochMost62}.

\begin{theorem}\label{thm-Hd-to-Hc}
Let $G$ be a real analytic group, and $V$ a differentiable (left) $G$-module. If $V$ is locally convex and $G$-integrable, then the canonical homomorphism $H^\ast_d(G,V)\lra H^\ast_c(G,V)$ is an isomorphism.
\end{theorem}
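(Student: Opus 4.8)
The canonical homomorphism is the one induced by the inclusion of cochain complexes $C^\ast_d(G,V)\hookrightarrow C^\ast_c(G,V)$, a differentiable cochain being in particular continuous. The plan is to exhibit a single complex of modules that is simultaneously an acyclic resolution of $V$ for \emph{both} theories, so that passing to cohomology turns this inclusion into the asserted isomorphism.

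First I would set up homogeneous bar resolutions. For $q\geq 0$ let $B^q_c(V)$ (resp.\ $B^q_d(V)$) be the space of continuous (resp.\ differentiable) maps $G^{q+1}\lra V$, with $G$ acting diagonally on the source and through its given action on $V$. Augmenting by the inclusion of constant functions $V\lra B^0$, the complexes $B^\bullet_c(V)$ and $B^\bullet_d(V)$ are exact, a contracting homotopy in \emph{either} setting being evaluation at the identity in the first slot — inserting a constant preserves both continuity and differentiability. Taking $G$-invariants recovers the inhomogeneous cochain complexes, so $H^\ast\big(B^\bullet_c(V)^G\big)=H^\ast_c(G,V)$, while $H^\ast\big(B^\bullet_d(V)^G\big)=H^\ast_d(G,V)$ by definition, and the comparison map is induced by $B^\bullet_d(V)\hookrightarrow B^\bullet_c(V)$.

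The crux is the lemma: each differentiable module $B^q_d(V)=C^\infty(G^{q+1},V)$ is acyclic for \emph{continuous} group cohomology, $H^p_c\big(G,B^q_d(V)\big)=0$ for $p>0$. Granting this, $B^\bullet_d(V)$ is a resolution of $V$ by $H^\ast_c$-acyclic modules, so $H^\ast\big(B^\bullet_d(V)^G\big)$ computes $H^\ast_c(G,V)$ as well, and since it equals $H^\ast_d(G,V)$ by definition we are done. To prove the lemma I would perform the real-analytic change of variables $(g_0,\ldots,g_q)\mapsto(g_0,g_0^{-1}g_1,\ldots,g_0^{-1}g_q)$, after which $B^q_d(V)$ is identified with a module coinduced (in the differentiable sense) from the trivial subgroup, and then establish a Shapiro-type contracting homotopy for the \emph{continuous} bar complex of this module. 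Unlike the classical Shapiro homotopy — which drops a variable — here the homotopy must create and destroy the ``differentiable'' direction, and this is carried out by integrating the relevant $V$-valued functions over $G$ against a fixed nonnegative smooth compactly supported density of total mass $1$; this is precisely where the hypotheses are essential, as local convexity together with $G$-integrability are exactly what make these vector-valued integrals converge, take values in $V$, and depend continuously on parameters.

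I expect this lemma — a differentiable Shapiro lemma that interfaces correctly with continuous cohomology — to be the main obstacle, the rest being formal once it is in place; an equivalent route would bypass the resolution language and instead construct directly an explicit smoothing operator $C^\ast_c(G,V)\lra C^\ast_d(G,V)$ together with a chain homotopy to the identity, built from an approximate identity on $G$ and the integration map $J_G$ furnished by $G$-integrability, but the analytic core is the same.
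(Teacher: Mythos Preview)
The paper does not prove this theorem at all: it is quoted verbatim as \cite[Thm.~5.1]{HochMost62} and used as a black box, so there is no ``paper's own proof'' to compare against. Your task here was only to cite the result, not to reprove it.

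That said, your sketch is a faithful outline of the Hochschild--Mostow argument. Their proof proceeds exactly by showing that the differentiably induced modules are acyclic for continuous cohomology, the contracting homotopy on the continuous bar complex being built by convolution against a smooth compactly supported function of integral one; the hypotheses of local convexity and $G$-integrability enter precisely to make that $V$-valued integral well defined and continuous, as you say. Your alternative ``explicit smoothing operator'' description is in fact closer to how Hochschild and Mostow actually phrase it. So the sketch is correct in outline, but for the purposes of this paper a citation suffices.
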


\ni We note from \cite[2.13(iii)]{Most61} that any finite dimensional Hausdorff, or more generally any continuous $G$-module which is complete as a t.v.s. is $G$-integrable, see \cite[Sect. 6]{HochMost62}.

\ni The passage to the continuous group cohomology enables us to link the Hochschild cohomology of $\Fc^\infty(G)$ to the (relative) Lie algebra cohomology, \cite[Sect. 6]{HochMost62}. 

\begin{theorem}\label{thm-Hc-to-HLie}
Let $G$ be a real analytic group, and $K$ be a maximal compact subgroup of $G$. Let also $V$ be a locally convex $G$-, $K$-, and $[0,1]$-integrable differentiable $G$-module. Then,
\begin{equation}
H^\ast_c(G,V) \cong H^\ast_{CE}(\Fg,\Fk,V).
\end{equation}
\end{theorem}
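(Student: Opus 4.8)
The plan is to reduce the statement to the classical van Est theorem in the form recorded by Hochschild--Mostow. First I would recall the setup: $G$ is a real analytic group with maximal compact subgroup $K$, and $V$ is a locally convex $G$-, $K$-, and $[0,1]$-integrable differentiable $G$-module. The essential geometric input is that the homogeneous space $G/K$ is diffeomorphic to a Euclidean space, so that $G$ is $K$-equivariantly contractible onto $K$; this is the classical Cartan--Iwasawa--Malcev structure theory of Lie groups, and it is the reason the relative Lie algebra complex $(\wedge^\ast(\Fg/\Fk)^\ast \otimes V)^K$ with the Chevalley--Eilenberg differential computes the right-hand side $H^\ast_{CE}(\Fg,\Fk,V)$.

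The key steps, in order, are as follows. (i) Using Theorem \ref{thm-Hd-to-Hc} (with $G$ replaced by $K$ in one instance to get $K$-integrability into play), pass from the continuous cohomology $H^\ast_c(G,V)$ to an intermediate complex of differentiable cochains; more precisely, I would invoke the van Est double complex of differentiable forms on $G/K$ with values in $V$, filtered in the two obvious ways. (ii) On the one hand, the rows of this double complex are acyclic because $G/K$ is contractible (here the $[0,1]$-integrability of $V$ is exactly what is needed to run the standard homotopy/averaging argument producing a contracting homotopy, integrating along paths in $G/K$), so the total cohomology equals the cohomology of the edge complex, which is the differentiable (hence by Theorem \ref{thm-Hd-to-Hc} the continuous) group cohomology of $G$. (iii) On the other hand, the columns compute, via the Poincar\'e lemma on $G/K$ together with the $K$-integrability of $V$ used to average over the compact group $K$, the relative Lie algebra cochain complex $W^\ast_{CE}(\Fg,\Fk,V)$, so the total cohomology also equals $H^\ast_{CE}(\Fg,\Fk,V)$. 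Comparing the two spectral sequences of the double complex then yields the isomorphism $H^\ast_c(G,V)\cong H^\ast_{CE}(\Fg,\Fk,V)$. This is precisely the argument carried out in \cite[Sect.\ 6]{HochMost62}, and I would cite \cite{vanEst53,Dupo76,HochMost62} for the details, emphasizing that all the integrability hypotheses on $V$ were imposed exactly so that those classical arguments go through verbatim.

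The main obstacle is checking that the coefficient module $V$ occurring in our situation -- namely a differentiable $G$-module arising, via Proposition \ref{prop-diff-ble-G-module}, as an $\Fc^\infty(G)$-comodule in the well-behaved (nuclear Fr\'echet or dual-Fr\'echet) category -- genuinely satisfies the hypotheses of Theorem \ref{thm-Hd-to-Hc} and of its $K$-analogue, i.e.\ that it is locally convex, $G$-integrable, $K$-integrable, and $[0,1]$-integrable. Local convexity is automatic in our category; $K$-integrability follows because $K$ is compact and $V$ is complete (every continuous $G$-module complete as a t.v.s.\ is $G$-integrable by \cite[2.13(iii)]{Most61}); and $[0,1]$-integrability and $G$-integrability likewise follow from completeness. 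So in the write-up I would isolate a short lemma verifying these integrability properties for well-behaved differentiable $G$-modules, after which the theorem is a direct appeal to \cite[Sect.\ 6]{HochMost62}. The only genuinely delicate point is making sure the vector-valued integrals used in \cite{HochMost62} make sense in the nuclear dual-Fr\'echet case, which is handled by the completeness assumption built into ``well-behaved''.
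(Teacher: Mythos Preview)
Your proposal is correct and matches the paper's approach: the paper does not give a proof of this theorem at all, but simply records it as a citation to \cite[Sect.~6]{HochMost62}, exactly as you ultimately do. Your sketch of the van Est double-complex argument is more detailed than anything in the paper, but it is the standard argument from the cited reference, so there is no divergence. One small remark: your final paragraph about verifying the integrability hypotheses for well-behaved $\Fc^\infty(G)$-comodules is not needed for the theorem as stated, since $G$-, $K$-, and $[0,1]$-integrability are explicit hypotheses here; that discussion belongs (and is useful) at the point of application, not in the proof of this theorem.
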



\subsection{Hopf-cyclic cohomology of  $\Fc^\infty(G_2)\tacl U(\Fg_1)$}\label{subsect-comput-Lie-Hopf}

\ni In this subsection we shall identify the (periodic) Hopf-cyclic cohomology of the Hopf algebra $\Fc^\infty(G_2)\tacl U(\Fg_1)$ with the Lie algebra cohomology of $\Fg_1\bi \Fg_2 $ relative to $\Fk$, the maximal compact subalgebra of $\Fg_2$, via a van Est type isomorphism. 

\medskip

\begin{theorem}\label{thm-main-identf}
Let $(G_1,G_2)$ be a matched pair of Lie groups, with Lie algebras $(\Fg_1,\Fg_2)$. Then the periodic Hopf-cyclic cohomology of the Hopf algebra $\Fc^\infty(G_2)\tacl U(\Fg_1)$  with coefficients in the canonical MPI ${}^\s k_\d$ is isomorphic with the Lie algebra  cohomology, with trivial coefficients, of $\Fg_1\bi\Fg_2$ relative to the maximal compact Lie subalgebra $\Fk$ of $\Fg_2$. In short,
\begin{equation*}
HP^\ast_{top}(\Fc^\infty(G_2)\tacl U(\Fg_1),{}^\s k_\d) \cong \widetilde{HP}^\ast(\Fg_1\bi\Fg_2,\Fk).
\end{equation*}
\end{theorem}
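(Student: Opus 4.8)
The plan is to assemble the isomorphism from the several pieces developed above, organized along a spectral sequence argument in the spirit of the bicrossed-product computations of Connes--Moscovici and of \cite{RangSutl}. First I would recall that the Hopf algebra $\Hc = \Fc^\infty(G_2)\tacl U(\Fg_1)$ is a bicrossed product of the topological Hopf algebras $\Fc := \Fc^\infty(G_2)$ and $\Uc := U(\Fg_1)$, so its Hopf-cyclic complex (with coefficients in the canonical MPI ${}^\s k_\d$) carries a filtration by the coalgebra degree in the $\Fc$-direction versus the $\Uc$-direction. Applying the topological Hopf-cyclic machinery of Section 3 to this filtered bicomplex yields a (first-quadrant, in the periodicity-stabilized sense) spectral sequence whose $E_1$-page is built from the coalgebra Hochschild cohomology of $\Fc$ with coefficients twisted by the $U(\Fg_1)$-data. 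This is the standard reduction, and the topological subtleties (completed tensor products, continuity of all the structure maps, Hausdorffness of the $\projhat_\Hc$-quotients) are exactly what the earlier sections were set up to guarantee.

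Next I would identify the $E_1$-page. By Proposition \ref{porp-diff-cohom}, the coalgebra Hochschild cohomology $H^\ast(\Fc^\infty(G_2),V)$ is isomorphic (as complexes, up to the sign $(-1)^{n+1}$) to the differentiable cohomology $H^\ast_d(G_2,V)$ for any differentiable $G_2$-module $V$; and by Theorem \ref{thm-Hd-to-Hc} together with Theorem \ref{thm-Hc-to-HLie}, for $V$ locally convex, $G_2$-, $K$-, and $[0,1]$-integrable, this is in turn $H^\ast_{CE}(\Fg_2,\Fk,V)$, the Lie algebra cohomology of $\Fg_2$ relative to the maximal compact subalgebra $\Fk$. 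Here the relevant coefficient module $V$ is the one carrying the residual $U(\Fg_1)$-action coming from the matched pair, i.e.\ essentially $\wg^\ast \Fg_1^\ast$ (or its appropriate completion) with the coadjoint-type action; one must check that this module satisfies the integrability hypotheses of Theorems \ref{thm-Hd-to-Hc} and \ref{thm-Hc-to-HLie}, which follows since each graded piece is finite dimensional, hence complete, hence integrable by \cite[2.13(iii)]{Most61}. The outcome is that the $E_1$-page is the relative Chevalley--Eilenberg complex of $\Fg_2$ relative to $\Fk$ with values in $\wg^\ast\Fg_1^\ast$, and the $d_1$-differential is the Chevalley--Eilenberg differential of $\Fg_1$.

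Then I would recognize the resulting double complex as computing precisely the relative Lie algebra (periodic cyclic) cohomology of the bicrossed-product Lie algebra $\Fg_1\bi\Fg_2$ relative to $\Fk$: the Lie algebra $\Fg_1\bi\Fg_2$ is the double-cross-sum Lie algebra of the matched pair $(\Fg_1,\Fg_2)$, its Chevalley--Eilenberg complex relative to $\Fk$ decomposes (as a bigraded vector space) as $\wg^\ast(\Fg_2/\Fk)^\ast \ot \wg^\ast\Fg_1^\ast$, and the two partial differentials match the $d_0$ and $d_1$ above. Invoking the Poincar\'e-duality/van~Est-type identification of the periodic cyclic bicomplex $\widetilde{HP}^\ast$ of Subsection \ref{subsect-cyclic-complexes-lie-alg} with the (relative) Lie algebra cohomology — exactly as in the finite-dimensional case treated in \cite{RangSutl} — collapses the spectral sequence in the periodic setting and produces the claimed isomorphism
\[
HP^\ast_{top}(\Fc^\infty(G_2)\tacl U(\Fg_1),{}^\s k_\d) \cong \widetilde{HP}^\ast(\Fg_1\bi\Fg_2,\Fk).
\]

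The main obstacle, I expect, is not the formal bookkeeping of the spectral sequence but rather the analytic verification that at each stage the completed tensor products and the quotient spaces defining $\projhat_\Hc$ remain well-behaved (nuclear Fr\'echet or dual thereof), so that the spectral sequence is actually a spectral sequence of the expected spaces and converges; and, relatedly, checking that the coefficient module arising in the $E_1$-page genuinely meets the integrability hypotheses needed to apply Theorems \ref{thm-Hd-to-Hc} and \ref{thm-Hc-to-HLie}. Once those points are in hand, the identification of the limit with $\widetilde{HP}^\ast(\Fg_1\bi\Fg_2,\Fk)$ is a transcription of the algebraic argument of \cite{RangSutl} into the topological framework established in the preceding sections.
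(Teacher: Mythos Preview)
Your proposal is correct and follows essentially the same route as the paper: reduce the Hopf-cyclic complex of the bicrossed product to a bicomplex $\wg^q\Fg_1^\ast\,\projot\,\Fc^\infty(G_2)^{\projot\,p}$ via the standard bicrossed-product machinery, identify the horizontal (Hochschild) direction with differentiable group cohomology via Proposition~\ref{porp-diff-cohom}, and then invoke the van~Est theorem to replace it by $C^\ast_{\rm CE}(\Fg_2,\Fk,\wg^\ast\Fg_1^\ast)$, recognizing the resulting double complex as the relative Chevalley--Eilenberg complex of $\Fg_1\bi\Fg_2$. The only organizational difference is that the paper packages the last step as an explicit map of bicomplexes $\mathscr{F}^{p,q}:C^p_{\rm CE}(\Fg_2,\Fk,\wg^q\Fg_1^\ast)\to C^p_d(G_2,\wg^q\Fg_1^\ast)$ (citing \cite{MoscRang11}) that is a quasi-isomorphism on rows, which simultaneously handles your ``$d_1$ matches $d_{\rm CE}$ of $\Fg_1$'' verification; your spectral-sequence phrasing is equivalent but requires that compatibility to be checked separately.
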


\begin{proof}
Let us first recall from \cite{Tayl72} that $E,F,G$ being topological spaces with $G$ being complete, a jointly continuous map\footnote[1]{$E,F,G$ being topological spaces; a map $E\times F \to G$ is called ``jointly continuous'' if it is continuous with respect to the product topology on $E \times F$.} $E\times F \to G$ extends (uniquely) to $E \,\projot\, F\to G$. As such, equipped with the natural topology, hence the bilinear maps being continuous \cite[Lemma A.2.8]{BonnFlatGersPinc94}, the isomorphisms of \cite[Prop. 3.16]{MoscRang09}, see also \cite[Prop. 5.1]{RangSutl} and \cite[Thm. 4.6]{RangSutl-III}, identifying the Hopf-cyclic cohomology of a bicrossed product Hopf algebra with the (total) cohomology of a bicomplex, extends to the completed projective tensor product. Therefore, the Hopf-cyclic cohomology of the bicrossed product Hopf algebra $\Fc^\infty(G_2)\tacl U(\Fg_1)$ may be computed by the bicomplex
\begin{align}\label{aux-bicomplex-H(G)-l}
\begin{xy} \xymatrix{  \vdots & \vdots
 &\vdots &&\\
 \wg^2{{\Fg_1}}^\ast  \ar[u]^{d_{\rm CE}}\ar[r]^{b\;\;\;\;\;\;\;\;\;\;}&   \wg^2{{\Fg_1}}^\ast\,\projot\, \Fc^\infty(G_2) \ar[u]^{d_{\rm CE}} \ar[r]^b &  \wg^2{{\Fg_1}}^\ast\,\projot\, \Fc^\infty(G_2)^{\ot 2} \ar[u]^{d_{\rm CE}} \ar[r]^{\;\;\;\;\;\;\;\;\;\;\;\;\;\;b} & \hdots&  \\
  {{\Fg_1}}^\ast  \ar[u]^{d_{\rm CE}}\ar[r]^{b\;\;\;\;\;\;\;\;\;\;\;\;\;}&  {{\Fg_1}}^\ast\,\projot\, \Fc^\infty(G_2) \ar[u]^{d_{\rm CE}} \ar[r]^b& {{\Fg_1}}^\ast\,\projot\, \Fc^\infty(G_2)^{\ot 2} \ar[u]^{d_{\rm CE}} \ar[r]^{\;\;\;\;\;\;\;\;\;\;\;\;b }& \hdots&  \\
  k \ar[u]^{d_{\rm CE}}\ar[r]^{b~~~~~~~}&  \Fc^\infty(G_2) \ar[u]^{d_{\rm CE}}\ar[r]^b& \Fc^\infty(G_2)^{\ot 2} \ar[u]^{d_{\rm CE}} \ar[r]^{\;\;\;\;\;\;\;b} & \hdots&, }
\end{xy}
\end{align}
where $d_{\rm CE}:\wg^q{\Fg_1}^\ast\,\projot\, \Fc^\infty(G_2)^{\ot p}\to \wg^{q+1}{\Fg_1}^\ast\,\projot\, \Fc^\infty(G_2)^{\ot p}$ is the Lie algebra cohomology of ${\Fg_1}$, with  coefficients in  $\Fc^\infty(G_2)^{\ot p}$. Similarly, $b:\wg^q{\Fg_1}^\ast\,\projot\, \Fc^\infty(G_2)^{\ot p} \to \wg^q{\Fg_1}^\ast\,\projot\, \Fc^\infty(G_2)^{\ot p+1}$ is the coalgebra Hochschild cohomology  coboundary with coefficients in the $\Fc^\infty(G_2)$-comodule $\wg^q{{\Fg_1}}^\ast$.
Since $\wg^q{\Fg_1}^\ast$ is a differentiable $G_2$-module,  by Proposition \ref{prop-diff-ble-G-module} it is  an $\Fc^\infty(G_2)$-comodule. As a result, by Proposition \ref{porp-diff-cohom} the bicomplex \eqref{aux-bicomplex-H(G)-l} is isomorphic with the bicomplex
\begin{align*}
\begin{xy} \xymatrix{  \vdots & \vdots
 &\vdots &&\\
 C_d^0(G_2,\wg^2{{\Fg_1}}^\ast)  \ar[u]^{d_{\rm CE}}\ar[r]^{{d^\ast}\;\;}&   C_d^1(G_2,\wg^2{{\Fg_1}}^\ast) \ar[u]^{d_{\rm CE}} \ar[r]^{{d^\ast}}&  C_d^2(G_2,\wg^2{{\Fg_1}}^\ast) \ar[u]^{d_{\rm CE}} \ar[r]^{\;\;\;\;\;\;\;\;\;\;{d^\ast}} & \hdots&  \\
  C_d^0(G_2,{{\Fg_1}}^\ast)  \ar[u]^{d_{\rm CE}}\ar[r]^{{d^\ast}\;\;}&  C_d^1(G_2,{{\Fg_1}}^\ast) \ar[u]^{d_{\rm CE}} \ar[r]^{{d^\ast}}& C_d^2(G_2,{{\Fg_1}}^\ast) \ar[u]^{d_{\rm CE}} \ar[r]^{\;\;\;\;\;\;\;\;\;\;\;{d^\ast} }& \hdots&  \\
  C_d^0(G_2,k) \ar[u]^{d_{\rm CE}}\ar[r]^{{d^\ast}~}&  C_d^1(G_2,k) \ar[u]^{d_{\rm CE}}\ar[r]^{{d^\ast}}& C_d^2(G_2,k) \ar[u]^{d_{\rm CE}} \ar[r]^{\;\;\;\;\;\;\;\;\;{d^\ast}} & \hdots&, }
\end{xy}
\end{align*}
where $d^\ast:C_d^p(G_2,\wg^q{{\Fg_1}}^\ast)\lra C_d^{p+1}(G_2,\wg^q{{\Fg_1}}^\ast)$ is the differentiable cohomology of $G_2$, with coefficients in $\wg^q{\Fg_1}^\ast$.

\noindent By \cite[Thm. 6.1]{HochMost62}, see also \cite{Dupo76,vanEst53}, one has the map
\begin{equation*}
\mathscr{F}^{p,q}:C^p_{\rm CE}(\Fg_2, \Fk, \wg^p{\Fg_1^\ast})\ra C_d^p(G_2,\wg^p{\Fg_1}^\ast)
\end{equation*}
of bicomplexes \cite{MoscRang11}, which  induces an isomorphism on the level of total cohomologies, as well as an isomorphism on the level of row cohomologies.
\end{proof}

\bibliographystyle{amsplain}
\bibliography{Rangipour-Sutlu-References}{}

\end{document}